\tikzset{negated/.style={
        decoration={markings,
            mark= at position 0.5 with {
                \node[transform shape] (tempnode) {$\backslash$};
            }
        },
        postaction={decorate}
    }
}
\newtheorem{theorem}{Theorem}[section]
\newtheorem{proposition}[theorem]{Proposition}
\newtheorem{lemma}[theorem]{Lemma}
\newcommand{\myitem}[1]{\item[#1]\protected@edef\@currentlabel{#1}}
\newtheorem{corollary}[theorem]{Corollary}
\theoremstyle{definition}
\newtheorem*{remark*}{Remark}
\newtheorem{conjecture}[theorem]{Conjecture}
\renewcommand{\Re}{\operatorname{Re}}
\newcommand{\lcm}{\operatorname{lcm}}
\newcommand{\linspan}{\operatorname{span}}
\newcommand{\fp}[1]{\left\{ #1 \right\} }
\newcommand{\ip}[1]{\left\lfloor #1 \right\rfloor }
\newcommand{\bra}[1]{\left(#1\right)}
\newcommand{\brabig}[1]{\big(#1\big)}
\newcommand{\braBig}[1]{\Big( #1 \Big)}
\renewcommand{\tilde}{\widetilde}
\renewcommand{\bar}{\overline}
\newcommand{\abs}[1]{\left|#1\right|}
\newcommand{\absnormal}[1]{|#1|}
\newcommand{\absbig}[1]{\big|#1\big|}
\newcommand{\set}[2]{\left\{ #1 \ \middle| \ #2 \right\} }
\newcommand{\ceil}[1]{\left\lceil #1 \right\rceil}
\newcommand{\e}{\varepsilon}
\renewcommand{\a}{\alpha}
\renewcommand{\b}{\beta}
\newcommand{\NN}{\mathbb{N}}
\newcommand{\QQ}{\mathbb{Q}}
\newcommand{\PP}{\mathbb{P}}
\newcommand{\FF}{\mathbb{F}}
\newcommand{\ZZ}{\mathbb{Z}}
\newcommand{\RR}{\mathbb{R}}
\newcommand{\CC}{\mathbb{C}}
\newcommand{\cG}{\mathcal{G}}
\newcommand{\cH}{\mathcal{H}}
\newcommand{\cD}{\mathcal{D}}
\newcommand{\cF}{\mathcal{F}}
\newcommand{\cS}{\mathcal{S}}
\newcommand{\cL}{\mathcal{L}}
\newcommand{\cN}{\mathcal{N}}
\newcommand{\cZ}{\mathcal{Z}}
\newcommand{\cM}{\mathcal{M}}
\newcommand{\dlog}{{d}_{\log}}
\definecolor{fresh}{HTML}{269e00}
\definecolor{checked}{HTML}{1e5e06}
\definecolor{double}{HTML}{5E3800}
\definecolor{external}{HTML}{a81a78}
\definecolor{later}{HTML}{0410ff}
\definecolor{minor-rev}{HTML}{d96a09}
\definecolor{major-rev}{HTML}{c90000}
\definecolor{skip}{HTML}{ffffff}\definecolor{normal}{HTML}{000000}
\definecolor{fresh}{HTML}{000000}
\definecolor{checked}{HTML}{000000}
\definecolor{double}{HTML}{000000}
\definecolor{external}{HTML}{000000}
\definecolor{later}{HTML}{000000}
\definecolor{minor-rev}{HTML}{000000}
\definecolor{major-rev}{HTML}{000000}
\newcommand{\bb}{\mathbf}
\renewcommand{\subset}{\subseteq}
\newcommand*\patchAmsMathEnvironmentForLineno[1]{\expandafter\let\csname old#1\expandafter\endcsname\csname #1\endcsname
  \expandafter\let\csname oldend#1\expandafter\endcsname\csname end#1\endcsname
  \renewenvironment{#1}{\linenomath\csname old#1\endcsname}{\csname oldend#1\endcsname\endlinenomath}}\newcommand*\patchBothAmsMathEnvironmentsForLineno[1]{\patchAmsMathEnvironmentForLineno{#1}\patchAmsMathEnvironmentForLineno{#1*}}\AtBeginDocument{\patchBothAmsMathEnvironmentsForLineno{equation}\patchBothAmsMathEnvironmentsForLineno{align}\patchBothAmsMathEnvironmentsForLineno{flalign}\patchBothAmsMathEnvironmentsForLineno{alignat}\patchBothAmsMathEnvironmentsForLineno{gather}\patchBothAmsMathEnvironmentsForLineno{multline}}
\newcounter{claimcounter}
\newenvironment{claim}{\refstepcounter{claimcounter}{\vspace{3pt}\par\noindent\textit{Claim \theclaimcounter:}}}{\vspace{3pt}}
\newenvironment{claimproof}[1]{\par\noindent\textit{Proof:}\space#1}{\hfill $\triangle$ \vspace{3pt}}
\newcommand{\D}{\mathbb{D}}
\begin{document}

\author[J.\ Konieczny ]{Jakub Konieczny}
\address{Universit\'e Claude Bernard Lyon 1, CNRS UMR 5208, Institut Camille Jordan,
F-69622 Villeurbanne Cedex, France}
\email{jakub.konieczny@gmail.com}

\title{On asymptotically automatic sequences}
\begin{abstract}
	We study the notion of an asymptotically automatic sequence, which generalises the notion of an automatic sequence. While $k$-automatic sequences are characterised by finiteness of $k$-kernels, the $k$-kernels of asymptotically $k$-automatic sequences are only required to be finite up to equality almost everywhere. We prove basic closure properties and a linear bound on asymptotic subword complexity, show that results concerning frequencies of symbols are no longer true for the asymptotic analogue, and discuss some classification problems.
\end{abstract}

\keywords{automatic sequences}
\subjclass[2010]{11B85 (Primary),  68Q45 (Secondary)}

\maketitle 

\section{Introduction}\label{sec:Intro}\color{checked}

{\color{double}
Automatic sequences are a widely-studied class of sequences, which can be defined in many equivalent ways, each shedding light from a different perspective. Perhaps the shortest definition uses the notion of the $k$-kernel. For a sequence $\bb a = (a_n)_{n=0}^\infty$ and a base $k \geq 2$, the \emph{$k$-kernel} of $\bb a$ is the set
\[
	\cN_k(\bb a) := \set{ \bra{a_{k^i n+r}}_{n=0}^\infty }{ i,r \in \NN_0,\ r < k^i }.
\]
A sequence $\bb a$ is $k$-automatic if and only if its $k$-kernel is finite, $\#\cN_k(\bb a) < \infty$. 

We will say that two sequences $\bb a = (a_n)_{n=0}^\infty$ and $\bb b = (b_n)_{n=0}^\infty$ are \emph{asymptotically equal}, denoted $\bb a \simeq \bb b$, if $a_n = b_n$ for almost all $n \in \NN_0$, i.e.,
\[
	\frac{1}{N} \#\set{ 0 \leq n < N}{ a_n \neq b_n} \to 0 \text{ as } N \to \infty.
\]
In \cite{JK-Cobham-asympt}, the author introduced the notion of an \emph{asymptotically $k$-automatic sequence}, which is a sequence $\bb a$ over a finite alphabet whose $k$-kernel is finite up to asymptotic equality. In other words, we require that the set
\[
	\tilde \cN_k(\bb a) := \cN_k(\bb a)/{\simeq}
\]
should be finite, $\# \tilde \cN_k(\bb a) < \infty$, or equivalently that there exists a finite number of sequences $\bb a^{(0)}, \bb a^{(1)}, \dots, \bb a^{(d-1)}$ such that for each $\bb b \in \cN_k(\bb a)$ there exists $0 \leq j < d$ such that $\bb b \simeq \bb a^{(j)}$. Asymptotically $k$-automatic sequences include $k$-automatic sequences, their modifications on density zero set of positions, as well as sequences $\bb a = (a_n)_{n=0}^\infty$ of the form
\(
	a_n = F\bra{\fp{\log_k n}},
\) 
where $F \colon [0,1) \to \Omega$ is a Riemann-measurable map taking values in some finite set $\Omega$. More examples are also constructed in \cite{JK-Cobham-asympt}.

The purpose of this paper is to systematically investigate asymptotically automatic sequences, and to establish analogues of various well-known facts about automatic sequences --- or to disprove them. There are several sources of motivation behind this endeavour. Firstly, we believe that the class of asymptotically automatic sequences is interesting in its own right and worthy of investigation. Secondly, some of the new results obtained here immediately imply new density versions of results expressed in terms of automatic sequences. In particular, this is the case for classification results obtained in Section \ref{sec:class}. Finally, we hope that the arguments presented here will better illuminate the theory of automatic sequences.
}

{\color{double}
In Section \ref{sec:basic} we discuss basic properties of asymptotically $k$-automatic sequences. In particular, we prove closure under Cartesian products and codings, and under passing to an arithmetic progression. We also discuss the connection with automata, obtained in \cite{JK-Cobham-asympt}.

In Section \ref{sec:bases} we discuss the dependence of the notion of a $k$-automatic sequence on the base $k$. We show that for multiplicatively dependent bases $k$ and $\ell$, asymptotically $k$-automatic sequences are the same as asymptotically $\ell$-automatic sequences. For multiplicatively independent bases $k$ and $\ell$, a celebrated theorem of Cobham asserts that the only sequences that are automatic in both bases are the eventually periodic ones. In contrast, as shown in \cite{JK-Cobham-asympt}, there exist sequences that are asymptotically automatic in two multiplicatively independent bases without being asymptomatically equal to a periodic sequence; we can only show a considerably weaker property of asymptotic invariance under a shift. We investigate the set of bases with respect to which a given sequence is asymptotically automatic and show that it has a particularly structured form.

In Section \ref{sec:freq} we discuss frequencies of symbols and subwords in asymptotically automatic sequences. In the case of automatic sequences, frequencies are not guaranteed to exist, but logarithmic frequencies are. Additionally, if frequencies do exist then they are rational. In contrast, we construct examples which show that for asymptotically automatic sequences logarithmic frequencies also are not guaranteed to exist, and if frequencies exist they are not guaranteed to be rational.

In Section \ref{sec:subword} we discuss subword complexity and its newly introduced asymptotic analogue. Automatic sequences are notable for having linear subword complexity. We argue that subword complexity is not the correct notion to consider in the asymptotic regime, and show that asymptotically automatic sequences have linear asymptotic subword complexity. We also prove several basic facts about asymptotic subword complexity. In particular, we classify sequences whose asymptotic subword complexity is especially low, which is analogous to the classification of periodic and Sturmian sequences in terms of subword complexity.

In Section \ref{sec:class} we discuss classification problems. Specifically, we fully classify bracket words that are asymptotically automatic, in particular showing that Sturmian words are never asymptotically automatic. We also partially classify multiplicative sequences that are asymptotically automatic.

\color{later}
In Section \ref{sec:regular} we introduce the notion of an asymptotically $k$-regular sequence, and we prove a variant of Cobham's theorem.
}

\subsection*{Acknowledgements}\color{later}
The author is grateful to Boris Adamczewski, Jakub Byszewski, Daniel Krenn, Oleksiy Klurman, Clemens M\"ullner for helpful comments and discussions. The author works within the framework of the LABEX MILYON (ANR-10-LABX-0070) of Universit\'e de Lyon, within the program "Investissements d'Avenir" (ANR-11-IDEX-0007) operated by the French National Research Agency (ANR).

\subsection{Notation}\color{double}

We let $\NN = \{1,2,\dots\}$ denote the set of natural numbers and let $\NN_0 = \NN \cup \{0\}$. For $N \in \NN_0$, we let $[N] = \{0,1,\dots,N-1\}$ denote the length-$N$ initial interval of $\NN_0$. 

For a set $\Omega$ we let $\Omega^*$ denote the set of all finite words $w = w_{0}w_{1}\dots w_{\ell-1}$ ($\ell \in \NN_0$, $w_0,w_1,\dots,w_{\ell-1} \in \Omega$) over $\Omega$, including the empty word $\epsilon$. The length of a word $w$ is denoted by $\abs{w}$. We let $\Omega^\infty$ denote the set of all sequences $\bb w = w_0w_1 w_2 \dots$ over $\Omega$. Given a sequence $\bb w$, we let $w_i$ denote its $i$-th entry (starting with $0$) and for $i \leq j$ we let $w_{[i,j)} = w_i w_{i+1} \dots w_{j-1}$ denote the corresponding subword.

For a word $w \in \Omega^*$ and $n \in \NN_0$, we let $w^n = ww\dots w$ denote the $n$-fold repetition of $w$, and $w^\infty = ww\dots \in \Omega^\infty$ denote the periodic sequence with period $w$. More generally, for $t \in \RR_{+}$ we let $w^t$ denote the prefix of $w^\infty$ of length $\ip{ t \abs{w}}$. (We caution that, in general, $(w^{t})^s \neq w^{ts}$.)

For an integer $k \geq 2$, we let $\Sigma_k = \{0,1,\dots,k-1\}$ denote the set of base-$k$ digits. For an integer $n \in \NN_0$, we let $(n)_k$ denote its base-$k$ expansion (without leading zeros) and for $u \in \Sigma_k^*$ (possibly with leading zeros) we let $[u]_k$ denote the integer encoded by $[u]$. For $i \in \NN_0$ we also let $(n)_k^i$ denote the unique word $u \in \Sigma_k^i$ such that $n \equiv [u]_k \bmod{k^i}$ (this is the length-$i$ suffix of $(n)_k$ or the result of padding $(n)_k$ with leading zeros, depending on the length of $(n)_k$).

We use standard asymptotic notation. For two quantities $X,Y$, we write $X = O(Y)$ or $X \ll Y$ if $\abs{X} \leq C Y$ for a constant $C$. If $C$ is additionally allowed to depend on a parameter $A$, we write $X = O_A(Y)$ or $X \ll_A Y$. Assuming that $X$ and $Y$ depend on a parameter $N$, we write $X = o_{N \to \infty}(Y)$ if $X/Y \to 0$ and $X = \omega_{N \to \infty}(Y)$ if $X/Y \to +\infty$ as $N \to \infty$; if $N$ is clear from the context and there is no risk of confusion, we simply write $o(Y)$ and $\omega(Y)$ instead.

We say that a statement $\varphi(n)$ holds for almost all $n \in \NN$ if the set of $n \in \NN$ for which $\varphi(n)$ is false has density zero, i.e., $\#\set{n < N}{ \neg \varphi(n)} = o(N)$ as $N \to \infty$.
\color{black} 
\section{Basic properties}\label{sec:basic}\color{double}

In this section, we review some basic properties of asymptotically automatic sequences. Most of them are direct analogues of similar properties of automatic sequences and can be established using essentially the same argument.

\begin{lemma}\label{lem:basic:product}
	Let $k \geq 2$ and let $\bb a,\bb b$ be asymptotically $k$-automatic sequences. Then the Cartesian product $\bb a \times \bb b = \bra{(a_n,b_n)}_{n=0}^\infty$ is an asymptotically $k$-automatic sequence.
\end{lemma}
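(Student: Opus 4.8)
The plan is to work directly with kernels and the characterization that a sequence is asymptotically $k$-automatic if and only if its $k$-kernel is finite modulo $\simeq$. Observe first that the $k$-kernel operation commutes with taking Cartesian products in the obvious sense: for any $i, r \in \NN_0$ with $r < k^i$, the subsequence $\bra{(a_{k^i n + r}, b_{k^i n + r})}_{n=0}^\infty$ equals $\bra{a_{k^i n + r}}_{n=0}^\infty \times \bra{b_{k^i n + r}}_{n=0}^\infty$. Hence $\cN_k(\bb a \times \bb b) \subseteq \set{\bb c \times \bb d}{\bb c \in \cN_k(\bb a),\ \bb d \in \cN_k(\bb b)}$; in fact equality holds, though the inclusion is all we need.

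Next I would check that $\simeq$ interacts well with products: if $\bb c \simeq \bb c'$ and $\bb d \simeq \bb d'$, then $\bb c \times \bb d \simeq \bb c' \times \bb d'$, since the set of positions where the pairs differ is contained in the union of the two individual exceptional sets, and a union of two density-zero sets has density zero. Therefore the map $(\bb c, \bb d) \mapsto \bb c \times \bb d$ descends to a well-defined map $\tilde\cN_k(\bb a) \times \tilde\cN_k(\bb b) \to (\Omega_a \times \Omega_b)^\infty/{\simeq}$ whose image contains the image of $\tilde\cN_k(\bb a \times \bb b)$. Since $\bb a$ and $\bb b$ are asymptotically $k$-automatic, $\#\tilde\cN_k(\bb a) < \infty$ and $\#\tilde\cN_k(\bb b) < \infty$, so the product set is finite and hence $\#\tilde\cN_k(\bb a \times \bb b) \leq \#\tilde\cN_k(\bb a) \cdot \#\tilde\cN_k(\bb b) < \infty$. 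This is exactly the assertion that $\bb a \times \bb b$ is asymptotically $k$-automatic.

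There is essentially no obstacle here; the only point requiring a moment's care is the elementary fact that a finite union of density-zero sets still has density zero, which underlies both the well-definedness of the product map on $\simeq$-classes and, implicitly, the equivalence between finiteness of $\tilde\cN_k$ and the existence of finitely many sequences $\bb a^{(0)}, \dots, \bb a^{(d-1)}$ as in the definition. One should also note that the alphabet of $\bb a \times \bb b$ is $\Omega_a \times \Omega_b$, which is finite since $\Omega_a$ and $\Omega_b$ are, so the product sequence genuinely lies in the class under consideration. I expect this proof to be short, mirroring the classical argument that automatic sequences are closed under Cartesian products, with $\simeq$ in place of equality throughout.
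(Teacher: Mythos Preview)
Your proof is correct and follows the same approach as the paper, which condenses the entire argument to the single observation that $\cN_k(\bb a \times \bb b) \subset \cN_k(\bb a) \times \cN_k(\bb b)$. Your explicit verification that $\simeq$ respects Cartesian products (so that finiteness passes from $\tilde\cN_k(\bb a) \times \tilde\cN_k(\bb b)$ to $\tilde\cN_k(\bb a \times \bb b)$) is the routine step the paper leaves implicit.
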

\begin{proof}
	It is enough to notice that $\cN_k(\bb a \times \bb b) \subset \cN_k(\bb a) \times \cN_k(\bb b)$.
\end{proof}	

\begin{lemma}\label{lem:basic:coding}
	Let $k \geq 2$, let $\bb a$ be an asymptotically $k$-automatic sequence over a finite alphabet $\Omega$ and let $\rho \colon \Omega \to \Omega'$ be a map. Then the coding $\rho(\bb a) = \bra{\rho(a_n)}_{n=0}^\infty$ is an asymptotically $k$-automatic sequence.
\end{lemma}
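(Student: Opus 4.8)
The plan is to follow the same template as the proof of Lemma~\ref{lem:basic:product}: reduce the statement to a containment (in fact an equality) between $k$-kernels, combined with the observation that the coding map $\rho$ is compatible with asymptotic equality. First I would note that applying $\rho$ entrywise commutes with passing to kernel elements: for any $i,r \in \NN_0$ with $r < k^i$ one has $\bra{\rho(a_{k^i n+r})}_{n=0}^\infty = \bra{\rho(a)_{k^i n+r}}_{n=0}^\infty$, so that, writing $\rho(\bb b) := (\rho(b_n))_{n=0}^\infty$ for a sequence $\bb b$,
\[
	\cN_k(\rho(\bb a)) = \set{\rho(\bb b)}{\bb b \in \cN_k(\bb a)}.
\]

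The next step is to check that $\rho$ respects $\simeq$. If $\bb b \simeq \bb c$, then $\set{n \in \NN_0}{\rho(b_n) \neq \rho(c_n)} \subset \set{n \in \NN_0}{b_n \neq c_n}$, and the right-hand set has density zero, hence so does the left-hand set; thus $\rho(\bb b) \simeq \rho(\bb c)$. Therefore $\rho$ descends to a well-defined map on quotients $\tilde\cN_k(\bb a) \to \tilde\cN_k(\rho(\bb a))$, which by the displayed equality is surjective. Since $\bb a$ is asymptotically $k$-automatic we have $\#\tilde\cN_k(\bb a) < \infty$, and surjectivity gives $\#\tilde\cN_k(\rho(\bb a)) < \infty$. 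Finally, $\rho(\bb a)$ takes values in $\rho(\Omega)$, which is finite because $\Omega$ is finite, so $\rho(\bb a)$ is a sequence over a finite alphabet with finite kernel up to $\simeq$, i.e.\ asymptotically $k$-automatic.

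There is no genuine obstacle here: the argument is entirely formal once one observes that the $k$-kernel operation commutes with the coding and that codings cannot increase the set of positions on which two sequences disagree. The only point that strictly requires an argument, albeit a one-line one, is the compatibility of $\rho$ with asymptotic equality; everything else is bookkeeping parallel to the Cartesian product case.
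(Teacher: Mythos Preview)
Your proof is correct and follows essentially the same approach as the paper, which simply records the containment $\cN_k(\rho(\bb a)) \subset \set{\rho(\bb b)}{\bb b \in \cN_k(\bb a)}$ and leaves the rest implicit. You have merely spelled out the compatibility of $\rho$ with $\simeq$ and the passage to quotients that the paper takes for granted.
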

\begin{proof}
	It is enough to notice that $\cN_k(\rho(\bb a))\subset \set{\rho(\bb b)}{\bb b \in \cN_k(\bb a)}$.
\end{proof}	

\begin{corollary}\label{lem:basic:sgroup}
	Let $k \geq 2$ and let $(S,\cdot)$ be a semigroup. Then asymptotically $k$-automatic sequences over $S$ form a semigroup, with the operation defined pointwise, i.e.\ by $\bb a \cdot \bb b = (a_n \cdot b_n)_{n=0}^\infty$.
\end{corollary}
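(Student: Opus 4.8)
The plan is to combine Lemma \ref{lem:basic:product} and Lemma \ref{lem:basic:coding} in a routine way. First I would observe that the pointwise product is a binary operation on sequences over $S$, so to show the set of asymptotically $k$-automatic sequences over $S$ is closed under it (and hence forms a semigroup, associativity being inherited pointwise from $S$), it suffices to check that if $\bb a$ and $\bb b$ are asymptotically $k$-automatic over $S$, then so is $\bb a \cdot \bb b$.

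Next I would note that $S$ itself need not be finite, but each of $\bb a$ and $\bb b$ takes values in a finite subset of $S$ by definition of an asymptotically $k$-automatic sequence (these are sequences over a finite alphabet). So let $\Omega_{\bb a}, \Omega_{\bb b} \subset S$ be the finite sets of values taken by $\bb a$ and $\bb b$ respectively. Then $\bb a \times \bb b$ is a sequence over the finite alphabet $\Omega_{\bb a} \times \Omega_{\bb b}$, and by Lemma \ref{lem:basic:product} it is asymptotically $k$-automatic. Now apply the multiplication map $\rho \colon \Omega_{\bb a} \times \Omega_{\bb b} \to S$, $\rho(x,y) = x \cdot y$, whose image $\rho(\Omega_{\bb a} \times \Omega_{\bb b})$ is again a finite subset of $S$. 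By Lemma \ref{lem:basic:coding}, the coding $\rho(\bb a \times \bb b) = (a_n \cdot b_n)_{n=0}^\infty = \bb a \cdot \bb b$ is asymptotically $k$-automatic over this finite alphabet, as desired.

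There is no real obstacle here; this is a formal deduction. The only point that requires a moment's care is the bookkeeping around finiteness of alphabets: the statement quantifies over an arbitrary semigroup $S$, but the notion of asymptotically $k$-automatic sequence is only defined for sequences over a finite alphabet, so one must restrict attention to the (automatically finite) alphabet generated by the values actually taken, and check that the multiplication map is being applied as a coding between finite alphabets. Once that is noted, the proof is a two-line invocation of the two preceding lemmas.
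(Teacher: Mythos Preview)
Your proposal is correct and follows exactly the paper's approach: apply Lemma~\ref{lem:basic:product} and then Lemma~\ref{lem:basic:coding} with the coding $\rho(x,y) = x \cdot y$. Your extra care in restricting to the finite alphabets $\Omega_{\bb a}, \Omega_{\bb b}$ actually occupied by $\bb a$ and $\bb b$ is a sensible clarification that the paper leaves implicit.
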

\begin{proof}
	Apply Lemmas \ref{lem:basic:product} and \ref{lem:basic:coding} with $\rho \colon S \times S \to S$ given by $\rho(x,y) = x \cdot y$.
\end{proof}

In particular, the family of complex-valued asymptotically $k$-automatic sequences is closed under addition and multiplication, i.e.\ it constitutes a ring. Similarly, the multiplicative inverse of an asymptotically $k$-automatic sequence, if it exists, is asymptotically $k$-automatic.

\begin{lemma}\label{lem:basic:almost-eq}
	Let $k \geq 2$, let $\bb a$ be an asymptotically $k$-automatic sequence and let $\bb a'$ be a sequence with $\bb a' \simeq \bb a$. Then $\bb a'$ is asymptotically $k$-automatic.
\end{lemma}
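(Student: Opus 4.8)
The plan is to show that asymptotic equality $\bb a' \simeq \bb a$ propagates through the kernel operation, so that $\tilde\cN_k(\bb a')$ is in bijective correspondence with $\tilde\cN_k(\bb a)$ and in particular remains finite. The key observation I would establish first is the following compatibility statement: if $\bb c \simeq \bb d$, then for every $i \in \NN_0$ and every $r$ with $0 \le r < k^i$ we have $(c_{k^i n + r})_{n=0}^\infty \simeq (d_{k^i n + r})_{n=0}^\infty$. This is because the map $n \mapsto k^i n + r$ sends $[N']$ injectively into $[k^i N' + r] \subseteq [k^i(N'+1)]$, so if we write $E = \{ n : c_n \ne d_n \}$, then $\#\{ n < N' : c_{k^i n + r} \ne d_{k^i n + r} \} \le \#\{ m < k^i(N'+1) : c_m \ne d_m \} = \#(E \cap [k^i(N'+1)])$, and dividing by $N'$ and using $\#(E \cap [M])/M \to 0$ (with $M = k^i(N'+1)$, noting $M/N' \to k^i$) gives that the left-hand density tends to $0$. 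Hence $\simeq$ is preserved under the kernel-generating operations.

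Next I would combine this with the fact that $\simeq$ is an equivalence relation (in particular transitive) to conclude. Take any $\bb b' \in \cN_k(\bb a')$, say $\bb b' = (a'_{k^i n + r})_{n=0}^\infty$. Setting $\bb b = (a_{k^i n + r})_{n=0}^\infty \in \cN_k(\bb a)$, the claim above with $\bb c = \bb a'$, $\bb d = \bb a$ gives $\bb b' \simeq \bb b$. Since $\bb a$ is asymptotically $k$-automatic, there are finitely many sequences $\bb a^{(0)}, \dots, \bb a^{(d-1)}$ such that every element of $\cN_k(\bb a)$ is $\simeq$-equivalent to one of them; in particular $\bb b \simeq \bb a^{(j)}$ for some $j$, and by transitivity $\bb b' \simeq \bb a^{(j)}$. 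As $\bb b'$ was arbitrary, the same finite list $\bb a^{(0)}, \dots, \bb a^{(d-1)}$ witnesses that $\bb a'$ is asymptotically $k$-automatic, i.e.\ $\#\tilde\cN_k(\bb a') \le d < \infty$.

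There is essentially no serious obstacle here; the only point requiring a moment of care is the density bookkeeping in the first step, specifically tracking that the index substitution $n \mapsto k^i n + r$ only dilates intervals by the bounded factor $k^i$, so that a density-zero exceptional set for $\bb a$ versus $\bb a'$ pulls back to a density-zero exceptional set for each pair of corresponding kernel elements. (One should also note that this lemma is symmetric in $\bb a$ and $\bb a'$, and that $\bb a'$ is automatically over a finite alphabet up to asymptotic equality, since it differs from $\bb a$ only on a density-zero set, so restricting $\bb a'$ to the alphabet of $\bb a$ plus at most finitely many extra symbols appearing with density zero — or simply noting the definition only constrains the kernel — causes no trouble.) I would also remark in passing that the argument in fact shows $\tilde\cN_k(\bb a') = \tilde\cN_k(\bb a)$ as sets of $\simeq$-classes, which is slightly stronger than the bare finiteness statement.
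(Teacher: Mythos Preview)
Your proposal is correct and follows exactly the same approach as the paper: the paper's proof is the single line ``It is enough to notice that $\tilde\cN_k(\bb a) = \tilde\cN_k(\bb a')$,'' and your argument is precisely a spelled-out justification of this equality (indeed, you note at the end that this is what you have shown). The only difference is level of detail; your density bookkeeping for the propagation of $\simeq$ through the maps $n \mapsto k^i n + r$ is fine and is the implicit content behind the paper's one-liner.
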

\begin{proof}
	It is enough to notice that $\tilde \cN_k(\bb a) = \tilde \cN_k(\bb a')$.
\end{proof}

	As a consequence of Lemma \ref{lem:basic:almost-eq}, it makes sense to speak of asymptotic $k$-automaticity of a sequence that is only defined almost everywhere. For instance, in the following result we consider the restriction $\bb a' = \bra{ a_{qn+r} }_{n=0}^\infty$ of an asymptotically $k$-automatic sequence $\bb a$ to an arithmetic progression $(qn+r)_{n=0}^\infty$. When $r < 0$, a few initial terms of $\bb a'$ are not well-defined. However, we can assign arbitrary values to those entries, and whether or not the resulting sequence is asymptotically $k$-automatic does not depend on the choices made.

\begin{lemma}\label{lem:basic:arith-prog}
	Let $k \geq 2$, let $\bb a$ be a sequence over a finite alphabet, and let $q \in \NN$. For $r \in \ZZ$, let   $\bb a^{(r)} := \bra{ a_{qn+r} }_{n=0}^\infty$, where we let $a_n := a_0$ for $n < 0$. 
	\begin{enumerate}
	\item\label{it:basic:AP-1} If $\bb a$ is asymptotically $k$-automatic then so is $\bb a^{(r)}$ for each $r \in \ZZ$.
	\item\label{it:basic:AP-2} If for all $r$ with $0 \leq r < q$ the sequence $\bb a^{(r)}$ is asymptotically $k$-automatic then $\bb a$ is also asymptotically $k$-automatic .
	\end{enumerate}
\end{lemma}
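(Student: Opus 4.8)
The plan is to reduce everything to the case $q = k$ and then iterate, since for $q = k^j$ the operation of passing to the progression $(qn+r)$ is essentially built into the definition of the $k$-kernel. Concretely, I first observe that if $\bb a$ is asymptotically $k$-automatic, then so is $\bra{a_{k^j n + s}}_{n=0}^\infty$ for every $j \geq 1$ and every $0 \leq s < k^j$: this sequence lies in $\cN_k(\bb a)$, hence its $k$-kernel is contained in $\cN_k(\bb a)$ (using that $\cN_k$ of a member of $\cN_k(\bb a)$ is again inside $\cN_k(\bb a)$, by the standard composition of digit-reading maps), so its kernel is finite up to $\simeq$. For general $q$, pick $j$ with $k^j \geq q$; I want to express $\bra{a_{qn+r}}_{n}$ in terms of finitely many sequences of the form $\bra{a_{k^j m + s}}_m$ composed with passing to an arithmetic progression with common difference coprime-ish to $k$. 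The cleanest route: reduce $q$ to the product of its ``$k$-part'' and a part coprime to $k$, handle the $k$-part as above, and for the coprime part use that multiplication by a unit mod $k^i$ permutes residues, so the relevant kernel stays finite. Alternatively, and more simply, one shows directly that $\tilde\cN_k(\bb a^{(r)})$ injects into a set built from $\tilde\cN_k(\bb a)$ together with finitely many bounded shifts; the shifts contribute only finitely much because $\bb a$ takes values in a finite alphabet and $\simeq$ is shift-invariant.

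For part \ref{it:basic:AP-1} with general $q$, here is the argument I would actually write. Fix $r$ and set $\bb a' = \bb a^{(r)}$. For each $i$ and each residue $\rho < k^i$ we must control $\bra{a'_{k^i n + \rho}}_n = \bra{a_{q k^i n + (q\rho + r)}}_n$. Writing $q\rho + r = k^i q' + \rho'$ is not directly helpful; instead note $a_{qk^i n + (q\rho+r)}$ is obtained from the sequence $\bb b_\rho := \bra{a_{qm + (q\rho+r)}}_m$ by the substitution $m = k^i n$, i.e.\ it is $\bra{(b_\rho)_{k^i n}}_n$, which lies in $\cN_k(\bb b_\rho)$. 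So it suffices to show that the single sequence $\bb b_0' := \bra{a_{qm + r}}_m = \bb a'$ itself — run once more — does not matter, and that what we really need is: the family $\set{\bb a^{(r)}}{r \in \ZZ}$ together with its $k$-kernels generates, up to $\simeq$, only finitely many classes. This is where the value-finiteness enters: by a pigeonhole/periodicity argument on the finitely many $\simeq$-classes in $\tilde\cN_k(\bb a)$, the sequences $\bra{a_{qm+r}}_m$ for varying $r$ fall, up to $\simeq$, into finitely many classes as $r$ ranges over $\ZZ$ (the relevant data stabilises once $r$ is large, and negative or small $r$ contribute a bounded correction supported near $0$, hence $\simeq 0$). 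Combining, $\tilde\cN_k(\bb a^{(r)})$ is finite.

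For part \ref{lem:basic:arith-prog}\ref{it:basic:AP-2}, the converse, I would argue that $\cN_k(\bb a)$ is, up to $\simeq$, covered by the kernels $\cN_k(\bb a^{(r)})$, $0 \leq r < q$, interleaved. Given $\bra{a_{k^i n + \rho}}_n \in \cN_k(\bb a)$, split according to the residue of $k^i n + \rho$ modulo $q$: as $n$ runs over a fixed residue class mod $q$, say $n \equiv t$, the indices $k^i n + \rho$ run over a single residue class mod $q$ (depending on $i,\rho,t$), so on that subprogression $\bra{a_{k^i n+\rho}}$ agrees with a subsequence of some $\bb a^{(r)}$ along an arithmetic progression, hence — applying part \ref{it:basic:AP-1}, already proved, to the $\bb a^{(r)}$ — is asymptotically $k$-automatic; then reassemble the $q$ pieces using Lemma \ref{lem:basic:product} (Cartesian product) and Lemma \ref{lem:basic:coding} (a coding that reads off $n \bmod q$ to select the correct component), noting that the map $n \mapsto n \bmod q$ is itself eventually periodic hence trivially asymptotically $k$-automatic, and that assembling finitely many asymptotically $k$-automatic sequences along residue classes mod $q$ preserves the property (this last assembly statement is the natural ``inverse'' of part \ref{it:basic:AP-1} for the modulus $q$, and can be seen by the same reduction to $q = k$, $q = k^j$, plus the coprime-modulus case). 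The main obstacle, and the place I would spend the most care, is the coprime-to-$k$ part of the modulus in part \ref{it:basic:AP-1}: when $\gcd(q,k) = 1$, passing to the progression $qn + r$ mixes digits in a way that is not visibly compatible with the $k$-kernel, and one must invoke the fact (true for ordinary $k$-automatic sequences, and extending to the asymptotic setting because the defining condition is about finiteness up to $\simeq$ and $\simeq$ is preserved under the relevant index maps) that automaticity is closed under such progressions — this is the step that genuinely uses more than formal manipulation of $\cN_k$.
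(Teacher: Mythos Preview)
Your plan for part \ref{it:basic:AP-1} has a real gap. You isolate the coprime-to-$k$ part of $q$ as the crux but never handle it: the attempt via $\bb b_\rho = \bb a^{(q\rho+r)}$ is circular (you do not yet know that $\bb a^{(q\rho+r)}$ is asymptotically $k$-automatic), and your closing remark that ``one must invoke the fact\dots that automaticity is closed under such progressions'' is exactly the statement to be proved. The paper sidesteps the $k$-part/coprime-part split entirely by reducing instead to the two elementary cases $q=1,\ r=\pm 1$ (shift) and $r=0$ with arbitrary $q$ (pure dilation), each admitting a short direct argument. For $r=0$, the set $\set{(b_{qn+s})_n}{\bb b \in \cN_k(\bb a),\ 0 \leq s < q}$ is closed under every map $\bb c \mapsto (c_{kn+j})_n$ (write $qj+s = ks'+j'$ with $0 \leq j' < k$, $0 \leq s' < q$; then $(c_{kn+j})_n = (b'_{qn+s'})_n$ for $\bb b' = (b_{km+j'})_m \in \cN_k(\bb a)$), contains $\bb a^{(0)}$, and is finite up to $\simeq$. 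No coprimality is used, so your anticipated obstacle simply does not arise under this decomposition.

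For part \ref{it:basic:AP-2}, your computation that each $\bb b^{(t)}$ lies in $\cN_k(\bb a^{(r)})$ for a suitable $r = r(i,m,t)$ is exactly the content of the paper's proof, and you should stop there: since each $\bb b^{(t)}$ falls into the finite set $\bigcup_{0 \leq r < q} \tilde\cN_k(\bb a^{(r)})$ and $\bb b$ is determined up to $\simeq$ by the tuple $(\bb b^{(0)},\dots,\bb b^{(q-1)})$, finiteness of $\tilde\cN_k(\bb a)$ follows immediately. Your subsequent ``reassembly'' via Cartesian product and coding is both unnecessary and, as written, wrong: applying the coding $(x_0,\dots,x_{q-1},j) \mapsto x_j$ to $\bra{a^{(0)}_n,\dots,a^{(q-1)}_n,\ n \bmod q}_n$ produces $\bra{a_{qn + (n \bmod q)}}_n$, not $\bra{a_n}_n$, since recovering $a_n$ requires $a^{(n \bmod q)}_{\lfloor n/q\rfloor}$ rather than $a^{(n \bmod q)}_{n}$.
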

\begin{proof}
	\begin{enumerate}
	\item It will suffice to consider the special cases where $q = 1$ and $r = \pm 1$, and where $r = 0$. The general case can be recovered by repeated applications of these two cases. 
In the case where $r = 0$, the proof follows from the same argument as Theorem 6.8.1 in \cite{AlloucheShallit-book}, except that we replace $\cN_k$ with $\tilde \cN_k$. The key idea is that the set 
	\[
		\set{ \bra{b_{qn+s}}_{n=0}^\infty}{\bb b \in \cN_k(\bb a),\ 0 \leq s < q}
	\]
	is finite up to equality almost everywhere and contains $ \cN_k\bra{\bb a^{(0)}}$.
In the case where $q = 1$ and $r = \pm 1$, similarly, the proof follows from the same argument as Theorem 6.8.3 in \cite{AlloucheShallit-book}. The key idea is that the set
	\[
	 	\cN_k(\bb a) \cup \set{ \bb b^{(r)}}{\bb b \in \cN_k(\bb a)},
	\]
	is finite up to equality almost everywhere and contains $\cN_k(\bb a^{(r)})$. (Here, $\bb b^{(r)}$ is defined by $b^{(r)}_n := b_{qn+r} = b_{n \pm 1}$ if $qn+r \geq 0$ and $b^{(r)}_n := b_0$ otherwise.)
	
	\item Pick any $\bb b \in \cN_k(\bb a)$, say $b_{n} = a_{k^i n + m}$ with $m,i \in \NN_0$, $m < k^i$. Then $\bb b$ is uniquely determined by the sequences $\bb b^{(s)}$ for $s$ with $0 \leq s < q$. Pick any $0 \leq s < q$ and put $r := k^i s + m \bmod q$ and $h := \ip{\bra{ k^i s + m}/q}$. Then $h < k^i$ and 
	\[
		b^{(s)}_n = a_{qk^i n + k^i s + m} = a^{(r)}_{k^i n + h}
	\]
	Thus, $\bb b^{(s)} \in \cN_k(\bb a^{(r)})$. Since $\tilde \cN_k(\bb a^{(r)})$ is finite and $r$ is chosen from a finite set, we conclude that $\tilde \cN_k(\bb a)$ is finite. \qedhere
	\end{enumerate}
\end{proof}

For a sequence $\bb a$ taking values in an commutative monoid we may consider the sequence of partial sums $\Sigma \bb a$ given by $(\Sigma a)_n = \sum_{m=0}^{n-1} a_m$ (in particular, $a_0 = 0$). The following lemma is an analogue of Corollary 6.9.3 in \cite{AlloucheShallit-book}.
\begin{lemma}\label{lem:basic:partial-sums}
	Let $\bb a$ be an asymptotically $k$-automatic sequence over a finite abelian monoid $(S,+)$. Then the sequence $\Sigma \bb a$ is asymptotically $k$-automatic.
\end{lemma}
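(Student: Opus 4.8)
The plan is to imitate the proof of Corollary 6.9.3 in \cite{AlloucheShallit-book} (which itself rests on Theorem 6.9.2 there), replacing $\cN_k$ by $\tilde\cN_k$ throughout and keeping track of the densities of the ``error'' sets that appear, in the same spirit as the proof of Lemma~\ref{lem:basic:arith-prog}. Write the operation of $S$ multiplicatively. The first step is to describe $\cN_k(\Sigma\bb a)$ explicitly: for $i\in\NN_0$ and $0\le r<k^i$, splitting the product over $\{m<k^in+r\}$ at the multiples of $k^i$ gives
\[
	(\Sigma a)_{k^in+r}=\Big(\prod_{m<k^in}a_m\Big)\cdot\Big(\prod_{t=0}^{r-1}a_{k^in+t}\Big)=(\Sigma A_i)_n\cdot R_{i,r}(n),
\]
where $\bb A_i:=(A_i(m))_{m\ge0}$ is the block-sum sequence $A_i(m):=\prod_{t<k^i}a_{k^im+t}$ and $R_{i,r}:=\prod_{t=0}^{r-1}\bb a^{[i,t]}$ is a pointwise product of the kernel elements $\bb a^{[i,t]}:=(a_{k^in+t})_{n\ge0}\in\cN_k(\bb a)$. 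Since asymptotic equality is preserved by two-fold pointwise products, it is enough to prove that each of the families $\{\Sigma\bb A_i:i\ge0\}$ and $\{R_{i,r}:i\ge0,\ 0\le r<k^i\}$ is finite up to $\simeq$.

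Next I would study the block sums. From $\bb A_0=\bb a$ and $\bb A_{i+1}=\prod_{u<k}\bb A_i^{(u)}$, Corollary~\ref{lem:basic:sgroup} and Lemma~\ref{lem:basic:arith-prog} give that every $\bb A_i$ is asymptotically $k$-automatic. The key extra input I would aim for — the asymptotic counterpart of the eventual periodicity of the ``compression'' operators in \cite[Thm.~6.9.2]{AlloucheShallit-book} — is that $(\bb A_i)_i$ is eventually periodic up to $\simeq$, and in a \emph{uniform} way: there are $i_0,p$ and, for each residue $j\bmod p$, a fixed density-zero set $F_j$ with $A_i(m)=A_{i+p}(m)$ whenever $i\ge i_0$, $i\equiv j$ and $m\notin F_j$. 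Finiteness of $S$ enters here: choosing $P,Q$ with $s^{P+Q}=s^P$ for all $s\in S$, the value $A_i(m)$ depends only on the tuple $\bigl(\#\{t<k^i:a_{k^im+t}=s\}\bigr)_{s\in S}$ read modulo the congruence folding $P+Q$ onto $P$; the period $p$ comes from the eventual periodicity of $k^i\bmod Q$, while the sets $F_j$ are assembled from the finitely many density-zero sets off which $\bb a$ and its decimations agree with chosen representatives of $\tilde\cN_k(\bb a)$. A parallel analysis — again expressing the value of a product of arbitrarily many elements of $\cN_k(\bb a)$ through these folded counts, which range over a bounded set — should show that $\{R_{i,r}\}$ is finite up to $\simeq$.

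The part I expect to be genuinely delicate is passing from such a stabilisation of $(\bb A_i)_i$ to finiteness of $\{\Sigma\bb A_i:i\ge0\}$ (and likewise to full control of $\{R_{i,r}\}$), because the operation $\bb b\mapsto\Sigma\bb b$ does \emph{not} respect $\simeq$: altering $\bb b$ at a single position changes $\Sigma\bb b$ on a set of density one, so a bare assertion such as ``$\bb A_i\simeq\bb A_{i+p}$'' is useless on its own. This is precisely why the $\bb A_i$ in a fixed residue class must be forced to agree off the \emph{same} density-zero set, and even then one still has to verify that $\prod_{m<n}A_i(m)=\prod_{m<n}A_{i+p}(m)$ for a density-one set of $n$; in the abelian monoid $S$ this reduces to controlling, uniformly in $i$, the partial products of the $\bb A_i$ over the (sparse) set of exceptional positions, which, being the density-zero support of an asymptotically $k$-automatic sequence, must not be ``too clustered''. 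Carrying out that estimate carefully is the technical heart of the argument; everything outside it is formal bookkeeping, in the mould of the proof of Lemma~\ref{lem:basic:arith-prog}.
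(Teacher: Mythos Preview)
Your decomposition $(\Sigma a)_{k^i n + r} = (\Sigma A_i)_n \cdot R_{i,r}(n)$ is exactly what one obtains by iterating the paper's one-step identity $(\Sigma b)_{kn+j} = \sum_{i<j}(\Sigma b^{(i)})_{n+1} + \sum_{i\ge j}(\Sigma b^{(i)})_n$, so at the structural level the two approaches coincide. The paper packages the same computation differently: it lets $\cS\subset S^\infty$ be the submonoid generated by $\Sigma\bb b$ and its shift for $\bb b\in\cN_k(\bb a)$, observes that $\cS$ is closed under all decimations $\bb c\mapsto(c_{kn+i})_n$ (this is the identity), and then asserts in one line that $\cS/{\simeq}$ is finite ``since $S$ and $\tilde\cN_k(\bb a)$ are finite.''

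Where you and the paper diverge is precisely at this last assertion, and you have put your finger on a genuine issue that the paper elides. You are correct that $\bb b\mapsto\Sigma\bb b$ does \emph{not} descend to $\simeq$-classes --- changing $\bb b$ at a single position alters $\Sigma\bb b$ on a cofinite set --- so finiteness of $\tilde\cN_k(\bb a)$ does not formally give finiteness of $\{[\Sigma\bb b]:\bb b\in\cN_k(\bb a)\}$, and hence the paper's one-line justification is, taken at face value, insufficient. The paper offers no further argument at this point. That said, your own proposal also stops short of a proof: the plan to force all $\bb A_i$ in a fixed residue class mod~$p$ to agree off a \emph{single} density-zero set $F_j$, and then to show that the partial products of $\bb A_i$ over $F_j$ are asymptotically constant, is reasonable, but you have not established either step --- in particular it is not clear that a common exceptional set (independent of $i$) exists, nor that the partial products over a density-zero set behave well enough. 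So the situation is: same strategy as the paper, with the crux correctly isolated, but the hard step is carried out in neither place.
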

\begin{proof}
	For any sequence $\bb b$ and $0 \leq i < k$, let $\bb b^{(i)}$ be the sequence given by $b^{(i)} := b_{kn+i}$. For each $0 \leq j \leq k$ we have 
	\begin{align*}\label{eq:basic:24:1}
		(\Sigma b)_{kn+j} 
		&= \sum_{m = 0}^{n-1} \sum_{i = 0}^{k-1} b_{km+i} + \sum_{i=0}^{j-1} b_{kn+i} 
		\\& = \sum_{i = 0}^{k-1} (\Sigma b^{(i)})_{n} + \sum_{i=0}^{j-1} b^{(i)}_n 
		= \sum_{i = 0}^{j-1} (\Sigma b^{(i)})_{n+1} + \sum_{i = j}^{k-1} (\Sigma b^{(i)})_{n}.
	\end{align*}
	Let $\cS \subset S^\infty$ be the sub-monoid of $S^\infty$ generated by $\Sigma \bb b$ and their shifts by $1$ (i.e., $( (\Sigma b)_{n+1})_{n=0}^\infty$) for $\bb b \in \cN_k(\bb a)$, with the operation defined pointwise. Since $S$ and $\tilde\cN_k(\bb a)$ are finite, $\cS/{\simeq}$ is finite. Applying the identity obtained above, we see that $\cS$ is closed under the operation $\bb c \mapsto \bra{c_{kn+i}}_{n=0}^\infty$ for each $0 \leq i < k$. It follows that $ \cN_k(\Sigma \bb a) \subset \cS $. Thus, $\tilde \cN_k(\Sigma \bb a)$ is finite.
\end{proof}

We close this section by citing a result from \cite{JK-Cobham-asympt} which elucidates the connection between automata and asymptotically automatic sequences. For a map $\phi \colon \Sigma_k^* \to \Omega$, we define the \emph{$k$-kernel} (or simply the \emph{kernel}, since there is no risk of ambiguity)
\begin{align*}
	\cN_k(\phi) &= \set{ \phi_v }{ v \in \Sigma_k^*}, & 
	\phi_v(u) := \phi(uv) \text{ for } u,v \in \Sigma_k^*.
\end{align*}
The map $\phi$ is automatic if and only if its kernel is finite, $\#\cN_k(\phi) < \infty$.

\begin{proposition}[{\cite{JK-Cobham-asympt}}]\label{prop:basic:structure}
	Let $\bb a$ be an asymptotically $k$-automatic sequence. Then there exists $d \in \NN$ and an automatic map $\phi \colon \Sigma_k^* \to \Sigma_d$ as well as sequences $\bb a^{(0)},\bb a^{(1)}, \dots, \bb a^{(d-1)}$ such that for each $u \in \Sigma_k^*$ we have
	\[
		\brabig{a_{k^{\abs{u}}n + [u]_k}}_{n=0}^\infty \simeq \brabig{a^{(\phi(n))}_n}_{n=0}^\infty.
	\]
\end{proposition}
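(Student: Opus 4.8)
The plan is to build $\phi$ directly from the partition of the kernel into $\simeq$-classes. Since $\bb a$ is asymptotically $k$-automatic, the quotient $T := \cN_k(\bb a)/{\simeq}$ is finite; set $d := \# T$, identify $T$ with $\Sigma_d$, and for each $j \in \Sigma_d$ fix a representative sequence $\bb a^{(j)}$ of the class $j$. Writing $\bb a_u := \bra{a_{k^{\abs u}n + [u]_k}}_{n=0}^\infty \in \cN_k(\bb a)$ for $u \in \Sigma_k^*$, define $\phi \colon \Sigma_k^* \to \Sigma_d$ by letting $\phi(u)$ be the class of $\bb a_u$. Then $\bb a_u \simeq \bb a^{(\phi(u))}$ holds by construction, so it only remains to prove that this $\phi$ is automatic, i.e.\ that $\cN_k(\phi)$ is finite.

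The argument rests on two elementary facts about the decimation operation $\bb c \mapsto \bb c_u := \bra{c_{k^{\abs u}n + [u]_k}}_{n=0}^\infty$, defined for an arbitrary sequence $\bb c$. First, decimations compose correctly with concatenation of digit strings: using $\abs{uv} = \abs u + \abs v$ and $[uv]_k = [u]_k k^{\abs v} + [v]_k$ one computes
\[
 (\bb a_{uv})_n = a_{k^{\abs u + \abs v}n + [u]_k k^{\abs v} + [v]_k} = (\bb a_v)_{k^{\abs u}n + [u]_k}, \qquad n \in \NN_0,
\]
that is, $\bb a_{uv} = (\bb a_v)_u$. Second, decimation respects asymptotic equality: if $\bb c \simeq \bb c'$ then $\bb c_u \simeq \bb c'_u$ for every $u$, because the set of $n$ at which $\bb c_u$ and $\bb c'_u$ differ is the preimage of $E := \{n : c_n \neq c'_n\}$ under the injection $n \mapsto k^{\abs u}n + [u]_k$, and such a preimage again has density zero (its counting function up to $N$ is bounded by that of $E$ up to $k^{\abs u}(N+1)$, which is $o(N)$ for fixed $u$).

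Now fix $v \in \Sigma_k^*$. The element $\phi_v \in \cN_k(\phi)$ is given by $\phi_v(u) = \phi(uv)$, the class of $\bb a_{uv} = (\bb a_v)_u$. I claim $\phi_v$ depends on $v$ only through $\phi(v)$: if $\phi(v) = \phi(v')$, i.e.\ $\bb a_v \simeq \bb a_{v'}$, then the second fact yields $(\bb a_v)_u \simeq (\bb a_{v'})_u$ for all $u$, so $\bb a_{uv}$ and $\bb a_{uv'}$ lie in the same class and $\phi_v(u) = \phi_{v'}(u)$. Hence $v \mapsto \phi_v$ factors through $v \mapsto \phi(v) \in \Sigma_d$, so $\# \cN_k(\phi) \leq d < \infty$ and $\phi$ is automatic.

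I do not foresee a genuine obstacle here; the argument is essentially bookkeeping around the kernel. The one point requiring care is the second fact above — one must verify that decimating along an arithmetic progression cannot inflate a density-zero exceptional set into one of positive density — together with keeping the digit-string conventions consistent so that the identity $\bb a_{uv} = (\bb a_v)_u$, and with it the compatibility of $\phi$ with the kernel operation, comes out exactly.
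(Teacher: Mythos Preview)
The paper does not prove this proposition; it is quoted from \cite{JK-Cobham-asympt} and stated without proof, so there is nothing to compare against here. Your argument is correct and is the natural one: identify $\Sigma_d$ with the finite set $\cN_k(\bb a)/{\simeq}$, let $\phi(u)$ be the class of $\bb a_u$, and verify automaticity of $\phi$ by checking that $\phi_v$ depends only on $\phi(v)$. The two ingredients you isolate --- the composition rule $\bb a_{uv} = (\bb a_v)_u$ and the fact that decimation along a fixed arithmetic progression preserves $\simeq$ --- are exactly what is needed, and your density estimate for the latter is fine. (Note also that the displayed formula in the statement has a typo: the right-hand side should read $a_n^{(\phi(u))}$ rather than $a_n^{(\phi(n))}$, as you implicitly corrected.)
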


In applications, we will always assume that the value of $d$ is minimal, meaning that no pair of sequences $\bb a^{(0)},\bb a^{(1)}, \dots, \bb a^{(d-1)}$ is asymptotically equal. The following can be construed as an analogue of the pumping lemma for automatic sequences. 

\begin{lemma}
	Let $\bb a$ be an asymptotically $k$-automatic sequence. Then there exist $p \in \NN$ such that for each word $w \in \Sigma_k^*$ with $\abs{w} \geq p$ there exist words $u,v,v' \in \Sigma_k^*$ such that $w = vuv'$, $\abs{w}-p \leq \abs{u} < \abs{w}$ and, letting $w(t) := v u^t v'$, for all $t \in \NN_0$ we have
$	
	\bra{a_{k^{\abs{w}} n + [w]_k}}_{n=0}^\infty \simeq \bra{a_{k^{\abs{w(t)}} n + [w(t)]_k}}_{n=0}^\infty$.
\end{lemma}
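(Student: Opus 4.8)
The idea is to apply the pigeonhole principle to the sequence of suffixes of $w$, using the finiteness of $\tilde\cN_k(\bb a)$. Write $w = w_0 w_1 \dots w_{\ell-1}$ with $\ell = \abs{w}$. For $0 \le j \le \ell$ let $s_j$ denote the length-$j$ suffix of $w$, so that $s_0 = \epsilon$ and $s_\ell = w$, and consider the associated kernel sequences $\bb b^{(j)} := \bra{a_{k^{j} n + [s_j]_k}}_{n=0}^\infty \in \cN_k(\bb a)$. Set $p := \#\tilde\cN_k(\bb a)$. If $\ell \ge p$, then among the $p+1$ sequences $\bb b^{(\ell-p)}, \bb b^{(\ell-p+1)}, \dots, \bb b^{(\ell)}$ there are two, say $\bb b^{(i)} \simeq \bb b^{(j)}$ with $\ell - p \le i < j \le \ell$, that are asymptotically equal. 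Then I would take $v'$ to be the length-$i$ suffix of $w$, $u$ the block of length $j - i$ sitting immediately to the left of $v'$ in $w$, and $v$ the remaining prefix, so that $w = vuv'$ and $\ell - p \le \abs{v'} < \abs{v'} + \abs{u} = j$; the claimed bound $\abs{w} - p \le \abs{u} < \abs{w}$ does not literally follow from this, so I would instead phrase the length constraint as $\abs{u} \ge 1$ together with $\abs{v'} \ge \abs{w} - p$ (equivalently $\abs{u v'} \le \abs{w}$ and $\abs{uv'} > \abs{v'} \ge \abs{w}-p$), reading the intended meaning of the statement accordingly.

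\textbf{Key steps.} First, record the elementary digit identity: for any word $x \in \Sigma_k^*$ and any sequence $\bb c$, writing $x = x' x''$ we have $\bra{c_{k^{\abs{x}} n + [x]_k}}_n = \bra{ (c')_{k^{\abs{x'}} n + [x']_k} }_n$ where $\bb c' := \bra{c_{k^{\abs{x''}} m + [x'']_k}}_m$; this is just the observation that reading the digits of $x$ from the most significant end corresponds to composing the kernel operations, and it is the content of "the $k$-kernel is closed under $\bb c \mapsto \bra{c_{kn+i}}_n$". Applying this with $x = w = vuv'$ and $\bb c = \bb a$, one gets $\bb b^{(\ell)} = \bra{a_{k^\ell n + [w]_k}}_n = \bra{ d_{k^{\abs{vu}} n + [vu]_k} }_n$ where $\bb d := \bb b^{(i)} = \bra{a_{k^i m + [v']_k}}_m$. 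Second, and this is the crux, I would show by induction on $t$ that replacing $u$ by $u^t$ changes the resulting sequence only on a density-zero set. The base case $t = 1$ is immediate. For the inductive step, note that going from $w(t) = v u^t v'$ to $w(t+1) = v u^{t+1} v'$ amounts to inserting one more copy of $u$; using $\bb b^{(i)} \simeq \bb b^{(j)}$ and the digit identity, the sequence attached to $u^{t+1} v'$ equals (after one kernel-block of length $\abs{u}$) the sequence attached to $u^t v'$ composed against $\bb b^{(i)}$ versus $\bb b^{(j)}$, and since $\bb b^{(i)} \simeq \bb b^{(j)}$ and taking a subsequence along $k^{\abs{u}} n + [u]_k$ preserves asymptotic equality (a density-zero set of positions in $\bb b^{(i)}$ pulls back to a density-zero set of positions in the arithmetic subprogression, by the same bound as in Lemma~\ref{lem:basic:arith-prog}), the two agree a.e. Propagating through the outer kernel-block of length $\abs{v}$ again preserves a.e.\ equality, giving $w(t+1) \simeq w(t)$ and hence $w(t) \simeq w(0) = w$ for all $t$.

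\textbf{Main obstacle.} The delicate point is that asymptotic equality is \emph{not} automatically preserved under the kernel operations in full generality --- passing to the subsequence $n \mapsto k^i n + r$ can blow up relative density by at most a bounded factor $k^i$, which is fine for a fixed number of steps but would be fatal if $i$ could grow. Here $i \le \ell = \abs{w}$ is fixed throughout (it does not depend on $t$), so each of the finitely many kernel operations composing $\bb b^{(j)}$ back up to the sequence attached to $w(t)$ inflates density by at most $k^{\abs{w}}$, a constant; hence a.e.\ equality survives. I would make this quantitative exactly as in the proof of Lemma~\ref{lem:basic:arith-prog}: if $a_n = a'_n$ for all $n$ outside a set of upper density $\delta$, then $a_{qn+r} = a'_{qn+r}$ for all $n$ outside a set of upper density at most $q\delta$. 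The only real care needed is to make sure the induction on $t$ uses a single fixed bound (depending on $\abs{w}$ but not $t$) at each step --- which it does, since the structural decomposition $w = vuv'$ is fixed once and for all before $t$ varies.
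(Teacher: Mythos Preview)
Your argument is correct, and you rightly flag that the printed length constraint $\abs{w}-p \le \abs{u} < \abs{w}$ appears garbled; the natural output of either approach is a bound of the form $1 \le \abs{u} \le p$ together with a bound on the position of $u$ inside $w$.

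Your route differs from the paper's. The paper invokes Proposition~\ref{prop:basic:structure} to obtain a genuinely automatic map $\phi \colon \Sigma_k^* \to \Sigma_d$ with the property that the asymptotic class of $\bra{a_{k^{\abs{w}} n + [w]_k}}_n$ depends only on $\phi(w)$, and then applies the classical pumping lemma to the automaton computing $\phi$; since $\phi(w) = \phi(w(t))$ for all $t$, the conclusion is immediate. You bypass Proposition~\ref{prop:basic:structure} entirely and work directly from finiteness of $\tilde\cN_k(\bb a)$: pigeonhole on the kernel sequences attached to suffixes of $w$ locates the repeated class, and an induction on $t$ --- using that restriction to the fixed arithmetic progression $n \mapsto k^{\abs{u}} n + [u]_k$ preserves $\simeq$ --- propagates asymptotic equality through all $t$. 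The paper's proof is a one-line reduction to the classical result, at the cost of hiding the combinatorics inside the black box of Proposition~\ref{prop:basic:structure}; yours is more self-contained and makes explicit exactly which stability property of $\simeq$ is being used. One minor remark: your worry about density inflation is over-cautious here, since at each inductive step the disagreement set has density exactly $0$, and $k^{\abs{u}} \cdot 0 = 0$ no matter how many times you iterate; transitivity of $\simeq$ then chains the steps together without any accumulation.
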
 
\begin{proof}
	Apply Proposition \ref{prop:basic:structure} and then apply the pumping lemma \cite[Lem.\ 4.2.1]{AlloucheShallit-book} to $\phi$.
\end{proof}
 
\section{Bases}\label{sec:bases}\color{double}

A classical theorem due to Cobham asserts that for an automatic sequence there is, essentially, only one base with respect to which it is automatic. To be more precise, it is not hard to show that each eventually periodic sequence is automatic in every base and that for any pair of multiplicatively dependent integers $k,\ell \geq 2$ (i.e., $\log k/\log \ell \in \QQ$), $k$-automatic sequences are the same as $\ell$-automatic sequences. Cobham's theorem asserts that for each automatic but not eventually periodic sequence $\bb a$ there exists precisely one integer $k \geq 2$ that is not a perfect power and such that $\bb a$ is $k$-automatic.

In contrast, an example from \cite{JK-Cobham-asympt} shows that there exists a sequence $\bb a$ that is asymptotically $2$- and $3$-automatic, but not asymptotically periodic. (It also seems very plausible that this sequence is not $k$-automatic for $k$ that have at least one prime factor $p \geq 5$, but the proof of this statement remains elusive.) Thus, for a given sequence over a finite alphabet, it makes sense to inquire into the set of bases with respect to which it is asymptotically automatic.

We first note that, in analogy with automatic sequences, asymptotically automatic sequences in multiplicatively dependent bases are the same. The argument is virtually identical to the one for automatic sequences, but we include it here for the sake of completeness.

\begin{lemma}\label{lem:bases:dependent}
	Let $\bb a$ be a sequence over a finite alphabet $\Omega$ and let $k,\ell \geq 2$ be multiplicatively dependent integers. Then $\bb a$ is asymptotically $k$-automatic if and only if $\bb a$ is asymptotically $\ell$-automatic.
\end{lemma}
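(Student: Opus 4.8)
The plan is to reduce to the case where $k$ and $\ell$ are powers of a common integer, and then to handle that case directly by relating the respective kernels. Since $k$ and $\ell$ are multiplicatively dependent, there is an integer $m \geq 2$ and exponents $a, b \in \NN$ with $k = m^a$ and $\ell = m^b$; it suffices to show that $\bb a$ is asymptotically $k$-automatic if and only if it is asymptotically $m$-automatic, since the same statement applied to $\ell$ then finishes the argument by symmetry. So I would fix $k = m^a$ and prove the two implications between asymptotic $m$-automaticity and asymptotic $k$-automaticity.

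For the easier direction, suppose $\bb a$ is asymptotically $m$-automatic. I would observe that every element of $\cN_k(\bb a)$ already lies in $\cN_m(\bb a)$: indeed $(a_{k^i n + r})_{n=0}^\infty = (a_{m^{ai} n + r})_{n=0}^\infty$ with $r < m^{ai}$, so $\cN_k(\bb a) \subseteq \cN_m(\bb a)$, whence $\tilde\cN_k(\bb a)$ is a quotient of (a subset of) the finite set $\tilde\cN_m(\bb a)$ and is therefore finite. For the harder direction, suppose $\bb a$ is asymptotically $k$-automatic. I want to show $\tilde\cN_m(\bb a)$ is finite. The standard trick (as in the automatic case) is that for any $\bb b \in \cN_m(\bb a)$ and any $j$ with $0 \leq j < a$, one can write a subsequence $(b_{m^j n + s})_{n=0}^\infty$ with $s < m^j$ as an element of $\cN_k$ of some sequence in $\cN_k(\bb a)$; concretely, each sequence in $\cN_m(\bb a)$ is of the form $(a_{m^i n + r})_{n=0}^\infty$, and writing $i = aq + j$ with $0 \leq j < a$ and grouping, one gets $(a_{m^i n + r})_{n=0}^\infty = (a_{k^q (m^j n) + r})$, which is an $m^j$-decimation of the element $(a_{k^q n + r'})_{n=0}^\infty \in \cN_k(\bb a)$ for the appropriate $r'$. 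Thus $\cN_m(\bb a)$ is contained in the set
\[
	\set{ (b_{m^j n + s})_{n=0}^\infty }{ \bb b \in \cN_k(\bb a),\ 0 \leq j < a,\ 0 \leq s < m^j },
\]
and this set is finite up to $\simeq$: it is the union over the finitely many pairs $(j,s)$ of the images of the finite sets $\tilde\cN_k(\bb a)$ under the decimation maps, and decimation respects $\simeq$ because a density-zero set of disagreements remains density-zero after passing to an arithmetic progression (this last point is exactly the mechanism already used in Lemma \ref{lem:basic:arith-prog}). Hence $\tilde\cN_m(\bb a)$ is finite.

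The one point that needs a little care — and the closest thing to an obstacle — is the claim that the decimation operation $\bb c \mapsto (c_{qn+r})_{n=0}^\infty$ maps $\simeq$-classes to $\simeq$-classes, i.e. that if $\bb c \simeq \bb c'$ then $(c_{qn+r})_n \simeq (c'_{qn+r})_n$. This is immediate: if $\#\{\,n < N : c_n \neq c'_n\,\} = o(N)$, then $\#\{\,n < N : c_{qn+r} \neq c'_{qn+r}\,\} \leq \#\{\,n < qN + r : c_n \neq c'_n\,\} = o(N)$. Everything else is bookkeeping with base-$m$ digits, identical to the proof of Theorem 6.8.1 in \cite{AlloucheShallit-book} with $\cN_k$ replaced by $\tilde\cN_k$ throughout, so I would keep that part brief and simply point to the analogy, as the paper does elsewhere.
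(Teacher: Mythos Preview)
Your proposal is correct and follows essentially the same approach as the paper's proof: both reduce to comparing a base with a power of it, show one kernel inclusion directly, and obtain the converse by writing elements of the finer kernel as bounded-parameter decimations of elements of the coarser kernel, using that decimation preserves $\simeq$ (the paper phrases this last step as an appeal to Lemma~\ref{lem:basic:arith-prog}). The only cosmetic difference is that the paper reduces to the case $\ell = k^m$, whereas you pass to a common base $m$ with $k = m^a$, $\ell = m^b$; these are equivalent reformulations.
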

\begin{proof}
	It is enough to consider the case where $\ell = k^m$ is an integer power of $k$ with $m \geq 2$. In this case we have $\cN_\ell(\bb a) \subset \cN_k(\bb a)$ and hence also $\tilde\cN_\ell(\bb a) \subset \tilde\cN_k(\bb a)$. Thus, if $\bb a$ is $k$-automatic then it is also $\ell$-automatic. Conversely, suppose that $\bb a$ is $\ell$-automatic, and consider a sequence $\bb b \in \tilde\cN_{k}(\bb a)$, say $b_{n} = a_{k^i n + r}$ where $0 \leq r < k^i$. Put $j = i \bmod m$ and $s = \ip{r/\ell^{\ip{i/m}}}$. Bearing in mind the identity
	\[
		k^{i} n + r = \ell^{\ip{i/m}}(k^j n + s) + \bra{r \bmod \ell^{\ip{i/m}}},
	\]
	we see that $\bb b \in \tilde\cN_{\ell}\bra{\bb a'}$, where $a'_{n} = a_{k^j n + s}$. By Lemma \ref{lem:basic:arith-prog}, $\bb a'$ is asymptotically $\ell$-automatic, and hence $ \tilde\cN_{\ell}\bra{\bb a'}$ is finite. Since $\bb b \in \tilde\cN_k(\bb a)$ was arbitrary and $j$ and $s$ are uniformly bounded with respect to $\bb b$, it follows that $\tilde\cN_k(\bb a)$ is finite.
\end{proof}

Next, we show some closure properties of the set of bases with respect to which a given sequence is asymptotically automatic. 	

\begin{lemma}\label{lem:bases:semigrp}
Let $k,\ell \geq 2$ be multiplicatively independent integers and let $\bb a$ be an asymptotically $k$-automatic sequence. Then $\bb a$ is asymptotically $\ell$-automatic if and only if it is asymptotically $k\ell$-automatic.
\end{lemma}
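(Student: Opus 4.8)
The plan is to relate the $k\ell$-kernel to the $k$- and $\ell$-kernels and their iterates, using the fact that $\cN_{k\ell}(\bb a) \subset \cN_k(\bb a)$ always, together with a mixed-base decomposition of integers. For the forward direction, suppose $\bb a$ is asymptotically $k$-automatic and asymptotically $\ell$-automatic. A sequence $\bb b \in \cN_{k\ell}(\bb a)$ has the form $b_n = a_{(k\ell)^i n + r}$ with $0 \le r < (k\ell)^i$; since $(k\ell)^i = k^i \ell^i$, we may write $\bb b \in \cN_{\ell^i}(\bb c)$ where $\bb c = (a_{k^i n + s})_{n}$ for suitable $s < k^i$ and the "inner" offset $r \bmod \ell^i$. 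Each such $\bb c$ lies in $\cN_k(\bb a)$, so $\bb c$ ranges over a set that is finite up to $\simeq$. The key point is then: if $\bb c$ is asymptotically $k$-automatic and also asymptotically $\ell$-automatic, is $\cN_{\ell^i}(\bb c)$ (as $i$ and the inner offset vary) finite up to $\simeq$? This is exactly what asymptotic $\ell$-automaticity of $\bb c$ gives, provided we know that $\bb c$ is asymptotically $\ell$-automatic — which we would want to deduce from $\bb a$ being asymptotically $\ell$-automatic via Lemma~\ref{lem:basic:arith-prog}(i). So the forward direction should go through by combining the mixed-base identity with Lemmas~\ref{lem:basic:arith-prog} and~\ref{lem:bases:dependent} (to pass between $\ell$ and $\ell^i$).

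For the reverse direction, suppose $\bb a$ is asymptotically $k$-automatic and asymptotically $k\ell$-automatic; we must show it is asymptotically $\ell$-automatic. This is the harder direction, and it is where multiplicative independence of $k$ and $\ell$ should be used. First note $\cN_{k\ell}(\bb a) \subset \cN_k(\bb a)$, so the hypothesis "$k\ell$-automatic" is automatic given "$k$-automatic"; hence the real content is that $k$-automaticity alone plus a genuine third hypothesis... wait — the statement is an equivalence given that $\bb a$ is already asymptotically $k$-automatic, so the reverse direction is "$k$-automatic $\Rightarrow$ $\ell$-automatic would be false", meaning the reverse implication must actually be "$k\ell$-automatic $\Rightarrow$ $\ell$-automatic" which, combined with $k$-automaticity, is nontrivial only if $k\ell$-automaticity is a real restriction. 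But as observed $\cN_{k\ell}\subset\cN_k$ makes $k\ell$-automaticity free. I therefore expect the reverse direction is in fact trivial in the sense that it needs no new idea beyond $\cN_{k\ell}(\bb a)\subset\cN_k(\bb a)$ — so both "$\Leftarrow$" amounts to showing asymptotic $k$-automaticity implies asymptotic $\ell$-automaticity is NOT what is claimed; rather the genuine direction is "$\Rightarrow$", and "$\Leftarrow$" should be the easy one. Let me restate: the substantive claim is that $k$- and $\ell$-automatic together imply $k\ell$-automatic (forward), and that $k$- and $k\ell$-automatic together imply $\ell$-automatic (reverse). Since $\cN_{k\ell}(\bb a) \subset \cN_k(\bb a)$ always holds, the reverse hypothesis "$k\ell$-automatic" adds nothing beyond "$k$-automatic", so the reverse direction would claim $k$-automatic $\Rightarrow$ $\ell$-automatic, which is false in general — hence I must be misreading the containment direction.

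Resolving this: the correct approach for the reverse direction is to show $\cN_\ell(\bb a)$ embeds (up to $\simeq$, and up to finitely many arithmetic-progression restrictions) into a set built from $\cN_{k\ell}(\bb a)$ and $\cN_k(\bb a)$. Concretely, $\bb b \in \cN_\ell(\bb a)$ has $b_n = a_{\ell^i n + r}$; I want to write $\ell^i$ in terms of $k\ell$ and $k$ using that $k,\ell$ are multiplicatively independent — but since there is no integer relation, the right move is a density/Chinese-remainder style argument: for large $i$, the residue $r$ and a suitable power $(k\ell)^j$ with $j$ close to $i$ let us factor $\ell^i n + r$ through a $(k\ell)^j$-kernel element followed by a bounded arithmetic-progression restriction and a $k$-power adjustment. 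The main obstacle is precisely making this factorization uniform: bounding the "error" offsets so that only finitely many auxiliary sequences appear, and invoking Lemma~\ref{lem:basic:arith-prog} to absorb them. I would carry it out by fixing $i$, choosing $j = \lfloor i \log\ell / \log(k\ell) \rfloor$ so that $(k\ell)^j \le \ell^i < (k\ell)^{j+1}$, writing $\ell^i = (k\ell)^j m + \rho$ with $0 \le \rho < (k\ell)^j$ and $m < k\ell$, and then expressing $b_n$ through an element of $\cN_{k\ell}(\bb a)$ composed with the restriction to the progression $m n + (\text{offset})$ — then cleaning up the remaining $k$-adic discrepancy using $k$-automaticity. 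The bookkeeping on the ranges of $m$, $\rho$, and the residues, and checking they stay in a fixed finite set as $i \to \infty$, is the crux; everything else reduces to the already-established closure lemmas.
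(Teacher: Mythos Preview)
Your forward direction (asymptotically $\ell$-automatic $\Rightarrow$ asymptotically $k\ell$-automatic) is correct and essentially matches the paper: write $(k\ell)^i n + r = k^i(\ell^i n + t) + s$ and observe $\cN_{k\ell}(\bb a) \subset \bigcup_{\bb c \in \cN_k(\bb a)} \cN_\ell(\bb c)$, with each $\bb c$ asymptotically $\ell$-automatic by Lemma~\ref{lem:basic:arith-prog}.

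However, your digression about $\cN_{k\ell}(\bb a) \subset \cN_k(\bb a)$ is simply false: $(k\ell)^i$ is not a power of $k$, so there is no such containment. You sensed something was wrong but never identified this as the error.

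More seriously, your proposed reverse direction has a genuine gap. You write $\ell^i = (k\ell)^j m + \rho$ with $0 \leq \rho < (k\ell)^j$, hoping to factor $(a_{\ell^i n + r})_n$ through a $(k\ell)$-kernel element. But then
\[
	\ell^i n + r = (k\ell)^j m n + \rho n + r,
\]
and the term $\rho n + r$ is \emph{not constant in $n$}. Unless $\rho = 0$ (i.e., $(k\ell)^j \mid \ell^i$, which essentially never happens since $k$ and $\ell$ are multiplicatively independent), this is not of the form $(k\ell)^j \cdot (\text{affine in } n) + \text{const}$, so it does not exhibit $\bb b$ as a $(k\ell)$-kernel element composed with a restriction to a bounded arithmetic progression. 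No amount of bookkeeping on the ranges of $m,\rho$ will fix this structural issue.

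The paper's key idea, which you are missing, is to first introduce a large reservoir of $k$-powers before attempting the $(k\ell)$-factorisation. Concretely: rather than showing $\bb a$ itself is asymptotically $\ell$-automatic directly, the paper shows that each sequence $(a_{k^I n + R})_n$ (for a suitably chosen fixed $I$) is asymptotically $\ell$-automatic, and then invokes Lemma~\ref{lem:basic:arith-prog}\ref{it:basic:AP-2}. A generic $\ell$-kernel element of this sequence has the form $(a_{k^I \ell^j n + t})_n$. Using asymptotic $k$-automaticity, one replaces $k^I$ by $k^i$ for arbitrarily large $i$ (in particular $i \geq j$) without changing the asymptotic equivalence class. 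Now $k^i \ell^j = (k\ell)^j k^{i-j}$ is \emph{exactly} divisible by $(k\ell)^j$, so one can cleanly pass to a $(k\ell)$-kernel element, reduce the exponent $j$ to a bounded $J$ via asymptotic $k\ell$-automaticity, and finally absorb the leftover powers of $k$ using asymptotic $k$-automaticity once more. The divisibility $(k\ell)^j \mid k^i \ell^j$ is what makes everything work, and it is precisely what your additive decomposition $\ell^i = (k\ell)^j m + \rho$ lacks.
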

\begin{proof}
	Suppose first that $\bb a$ is asymptotically $\ell$-automatic. To show that $\bb a$ is asymptotically $k\ell$-automatic it is enough to notice that 
		\[ 
			\cN_{k \ell}(\bb a) \subset \bigcup_{\bb b \in \cN_\ell(\bb a)} \cN_{k}(\bb b),
		\]
		and each sequence $\bb b \in \cN_\ell(\bb a)$ is asymptotically $k$-automatic by Lemma \ref{lem:basic:arith-prog}.

Suppose next that $\bb a$ is asymptotically $k\ell$-automatic. Let $I \in \NN_0$ be sufficiently large that for each integer $R$ with $0 \leq R < k^I$ there exist infinitely many pairs $(i,r) \in \NN_0^2$ with $0 \leq r < k^i$ such that $\bra{a_{k^i n+r}}_{n=0}^\infty \simeq \bra{a_{k^I n+R}}_{n=0}^\infty$. We will show that for each $0 \leq R < k^I$ the sequence $\bra{a_{k^I n+R}}_{n=0}^\infty$ is asymptotically $\ell$-automatic, which will finish the argument by Lemma \ref{lem:basic:arith-prog}. Each sequence in $\cN_{\ell}\bra{\bra{a_{k^I n+R}}_{n=0}^\infty}$ takes the form $\bra{a_{k^I \ell^j n+t}}_{n=0}^\infty$ for some $j,t \in \NN_0$ with $0 \leq t < k^I \ell^j$ and $t \equiv R \bmod k^I$. By the definition of $I$, we can find arbitrarily large $i \in \NN_0$ and $0 \leq r < k^i$ such that
\begin{align*}
\bra{a_{k^I \ell^j n+t}}_{n=0}^\infty =
\bra{a_{k^I (\ell^j n + \ip{t/k^I})+R}}_{n=0}^\infty \simeq
\bra{a_{k^i (\ell^j n + \ip{t/k^I})+r}}_{n=0}^\infty =
\bra{a_{k^i \ell^j n + u}}_{n=0}^\infty,
\end{align*}
where $u  = k^i\ip{t/k^I}+r < k^i \ell^j$. Since $\bb a$ is asymptotically $k\ell$-automatic, we can find $J,T \in \NN_0$, with $J$ bounded by a constant independent of $j$ and $0 \leq T < (k\ell)^J$ such that 
\begin{align*}
\bra{a_{k^i \ell^j n + u}}_{n=0}^\infty
& =
\bra{a_{ (k\ell)^j(k^{i-j} n + \ip{u/(k\ell)^j}) + (u \bmod (k\ell)^j) }}_{n=0}^\infty 
\\ & \simeq
\bra{a_{ (k\ell)^J(k^{i-j} n + \ip{u/(k\ell)^j}) + T }}_{n=0}^\infty
= \bra{a_{ k^{i-j+J} \ell^J n + v }}_{n=0}^\infty,
\end{align*}
where $v = (k\ell)^J \ip{u/(k\ell)^j}+T < k^{i-j+J} \ell^J$. Finally, using the fact that $\bb a$ is asymptotically $k$-automatic, we can find $M,V \in \NN_0$ with $M$ bounded by a constant independent of $j$ and $0 \leq V < k^{M}$ such that 
\begin{align*}
\bra{a_{ k^{i-j+J} \ell^J n + v }}_{n=0}^\infty 
&= 
\bra{a_{ k^{i-j+J} (\ell^J n + \ip{v/k^{i-j+J}}) + (v \bmod k^{i-j+J}) }}_{n=0}^\infty 
\\ &\simeq 
\bra{a_{ k^{M} (\ell^J n + \ip{v/k^{i-j+J}}) + V }}_{n=0}^\infty 
=
\bra{a_{ k^{M} \ell^J n + S}}_{n=0}^\infty,
\end{align*}
where $S =  k^{M} \ip{v/k^{i-j+J}} + V < k^{M} \ell^J$. Ultimately, the sequence $\bra{a_{k^I \ell^j n+t}}_{n=0}^\infty$ is asymptotically equal to the sequence $\bra{a_{ k^{M} \ell^J n + S}}_{n=0}^\infty$, which is taken from a finite set independent of $j$ and $t$. This shows  $\tilde\cN_{\ell}\bra{\bra{a_{k^I n+R}}_{n=0}^\infty}$ is finite, as needed.
\end{proof}

As a consequence of Lemma \ref{lem:bases:semigrp}, the set of bases with respect to which $\bb a$ is asymptotically automatic has a particularly structured form. Before we proceed, we need the following simple lemma on vector spaces.

\begin{lemma}\label{lem:bases:vectors}
	Let $S \subset \bigoplus_{i=1}^\infty \QQ$ be a set satisfying the following properties:
\begin{enumerate}
\item for each $u \in S$ and $i \in \NN$, $u_i \in \NN_0$;
\item for each $u \in S$ and $m \in \NN$, if $m \mid u_i$ for all $i \in \NN$ then $(1/m)u \in S$;
\item for each $u,v \in S$, $u+v \in S$;
\item for each $u,v \in S$, if $u_i \geq v_i$ for all $i \in \NN$ then $u-v \in S$.
\end{enumerate}
Then there exists a vector space $V < \bigoplus_{i=1}^\infty \QQ$ such that 
\[
	S = V \cap \NN_0^\infty = \set{ v \in V}{ v_i \in \NN_0 \text{ for all } i \in \NN}.
\]
\end{lemma}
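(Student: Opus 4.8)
The plan is to let $V = \linspan_{\QQ} S$, the $\QQ$-linear span of $S$ inside $\bigoplus_{i=1}^\infty \QQ$, and show that this choice works. One inclusion is trivial: by property (\textit{i}) every $u \in S$ has $u_i \in \NN_0$, and of course $S \subset V$, so $S \subset \set{v \in V}{v_i \in \NN_0 \text{ for all }i}$. The content is the reverse inclusion, so I would fix $v \in V$ with all coordinates in $\NN_0$ and aim to show $v \in S$.

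Since $v$ lies in the $\QQ$-span of $S$, I can write $v = \sum_{j=1}^t q_j u^{(j)}$ with $q_j \in \QQ$ and $u^{(j)} \in S$; clearing denominators, there is an integer $N \geq 1$ with $N v = \sum_j c_j u^{(j)}$ for integers $c_j$, and splitting into positive and negative parts I get $Nv + \sum_{j \in J^-} |c_j| u^{(j)} = \sum_{j \in J^+} c_j u^{(j)}$. The right-hand side is a sum of elements of $S$ with nonnegative integer coefficients, hence lies in $S$ by repeated use of property (\textit{iii}). Call this common value $w \in S$; note $w = Nv + (\text{something in }S)$, so in particular $w_i \geq N v_i$ coordinatewise, and also $w_i \geq (\sum_{j\in J^-}|c_j|u^{(j)})_i$ coordinatewise. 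The first step is to isolate $Nv$: since $w$ and $\sum_{j \in J^-}|c_j|u^{(j)}$ are both in $S$ and the latter is coordinatewise $\leq w$, property (\textit{iv}) gives $w - \sum_{j\in J^-}|c_j|u^{(j)} = Nv \in S$. Now $Nv \in S$ has every coordinate divisible by $N$ (as $v$ has integer coordinates), so property (\textit{ii}) yields $(1/N)(Nv) = v \in S$, as desired. That $V$ is a vector space is immediate since it is defined as a span.

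The one subtlety I would be careful about is the finiteness of the support: elements of $\bigoplus_{i=1}^\infty \QQ$ have only finitely many nonzero coordinates, so all the coordinatewise comparisons and the divisibility condition "$m \mid u_i$ for all $i$" only involve finitely many indices, and the manipulations above stay within $\bigoplus_{i=1}^\infty \QQ$; there is no issue with infinitely many conditions. I expect the main (very mild) obstacle to be purely bookkeeping: making sure that when I apply property (\textit{iv}) the subtrahend really is coordinatewise dominated — which is exactly why I first form the combined element $w$ by moving all negative-coefficient terms to the other side, guaranteeing the needed inequalities — and making sure the coefficient $N$ used for clearing denominators is the same $N$ that then divides every coordinate of $Nv$, which it is by construction.
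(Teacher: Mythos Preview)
Your proof is correct and follows essentially the same strategy as the paper. The only cosmetic difference is that the paper defines $V$ as the set of $v$ for which there exist $m \in \NN$ and $u \in S$ with $mv + u \in S$, and then verifies that this set is a vector space, whereas you take $V = \linspan_{\QQ} S$ from the outset (which is the same set) and thereby skip that verification; the heart of the argument --- produce an integer multiple $Nv$ inside $S$ using (\textit{iii}) and (\textit{iv}), then divide using (\textit{ii}) --- is identical in both.
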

\begin{proof}
	Let $V$ denote the set of all $v \in \bigoplus_{i=1}^\infty \QQ$ such that there exist $m \in \NN$ and $u \in S$ such that $m v+u \in S$. It is routine to check that if $v,v' \in V$ and $n \in \NN$ then $v + v' \in V$, $(1/n) v \in V$ and $-v \in V$. Thus, $V$ is a vector space.
	Directly by definition, for all $v \in S$ we have $v \in V$ and $v_i \in \NN_0$ for all $i \in \NN$. Suppose conversely that $v \in V$ and $v_i \in \NN_0$ for all $i \in \NN_0$. Then $m v + u \in S$ for some $u \in S$ and $m \in \NN$. We then infer from the closure properties of $S$ that $mv \in S$ and $v \in S$.  
\end{proof}

For a prime $p$, we let $\nu_p$ denote the $p$-adic valuation, meaning that $p^{\nu_p(n)}$ is the largest power of $p$ that divides $n$. Let also $p_1, p_2,\dots$ be the increasing enumeration of the primes.

\begin{corollary}\label{cor:bases:form}
	Let $\bb a$ be a sequence over a finite alphabet $\Omega$.
	There exists a vector subspace $V < \bigoplus_{i=1}^\infty \QQ$ such that for $k \in \NN$ with $k > 1$, $\bb a$ is asymptotically $k$-automatic if and only if $(\nu_{p_1}(k), \nu_{p_2}(k), \dots) \in V$.
\end{corollary}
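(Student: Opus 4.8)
The plan is to recast the statement in the form of Lemma~\ref{lem:bases:vectors}. For $k \in \NN$ write $\bb e(k) := \bra{\nu_{p_1}(k), \nu_{p_2}(k), \dots} \in \bigoplus_{i=1}^\infty \QQ$; then $\bb e$ is a bijection from $\NN$ onto the set of finitely supported sequences with entries in $\NN_0$, and $\bb e(k\ell) = \bb e(k)+\bb e(\ell)$ for all $k,\ell$. I would then consider
\[
	S := \set{ \bb e(k) }{ k = 1, \text{ or } k \geq 2 \text{ and } \bb a \text{ is asymptotically } k\text{-automatic} },
\]
check that $S$ satisfies conditions \textit{(i)}--\textit{(iv)} of Lemma~\ref{lem:bases:vectors}, and read off the conclusion: the lemma furnishes a vector space $V < \bigoplus_{i=1}^\infty\QQ$ with $S = \set{v \in V}{v_i \in \NN_0 \text{ for all } i}$, and since $\bb e(k)$ always has entries in $\NN_0$ and $\bb e$ is injective, for $k > 1$ the three conditions ``$\bb a$ is asymptotically $k$-automatic'', ``$\bb e(k) \in S$'' and ``$\bb e(k) \in V$'' are equivalent, which is exactly the assertion.

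Condition \textit{(i)} holds by construction. For \textit{(ii)}, suppose $u = \bb e(k) \in S$ and $m \mid u_i$ for all $i$; then $k = \ell^m$ for $\ell := \bb e^{-1}(u/m) \in \NN$, and $(1/m)u = \bb e(\ell)$. If $k = 1$ then $\ell = 1$; if $k \geq 2$ then $k$ and $\ell$ are multiplicatively dependent, so by Lemma~\ref{lem:bases:dependent} $\bb a$ is asymptotically $\ell$-automatic, i.e.\ $\bb e(\ell) \in S$. For \textit{(iii)}, take $u = \bb e(k), v = \bb e(\ell) \in S$; we must show $\bb e(k\ell) \in S$, and we may assume $k,\ell \geq 2$ since otherwise $k\ell \in \{k,\ell\}$. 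If $k$ and $\ell$ are multiplicatively independent then, $\bb a$ being asymptotically $k$-automatic, Lemma~\ref{lem:bases:semigrp} gives that it is asymptotically $k\ell$-automatic. If $k$ and $\ell$ are multiplicatively dependent, then $\bb e(k\ell) = \bb e(k)+\bb e(\ell)$ is a rational multiple of $\bb e(k)$, so $k$ and $k\ell$ are multiplicatively dependent, and Lemma~\ref{lem:bases:dependent} transfers asymptotic $k$-automaticity of $\bb a$ to asymptotic $k\ell$-automaticity. Either way $\bb e(k\ell) \in S$.

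Condition \textit{(iv)} is handled in the same spirit: given $u = \bb e(k), v = \bb e(\ell) \in S$ with $u_i \geq v_i$ for all $i$, i.e.\ $\ell \mid k$, put $k' := k/\ell$ and show $\bb e(k') \in S$, assuming $k',\ell \geq 2$ (the other cases being trivial since then $k' \in \{1,k\}$). If $k'$ and $\ell$ are multiplicatively independent, apply Lemma~\ref{lem:bases:semigrp} with base $\ell$, using that $\bb a$ is asymptotically $\ell$-automatic: then $\bb a$ is asymptotically $k'$-automatic if and only if it is asymptotically $k'\ell = k$-automatic, which it is. If $k'$ and $\ell$ are multiplicatively dependent, then $k = k'\ell$ and $k'$ are multiplicatively dependent, and Lemma~\ref{lem:bases:dependent} again transfers asymptotic $k$-automaticity of $\bb a$ to asymptotic $k'$-automaticity. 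This completes the verification, and the corollary follows.

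I do not expect a genuine obstacle: essentially all the work is packaged inside Lemmas~\ref{lem:bases:dependent} and \ref{lem:bases:semigrp}, and what remains is bookkeeping. The only points that need care are the systematic case split on multiplicative (in)dependence in \textit{(iii)} and \textit{(iv)}, and remembering to include $\bb e(1) = 0$ in $S$; without it $S$ could be empty (when $\bb a$ is not asymptotically automatic in any base), and then Lemma~\ref{lem:bases:vectors} would not apply.
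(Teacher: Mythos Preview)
Your proposal is correct and follows essentially the same route as the paper: define $S$ as the set of valuation vectors of the admissible bases together with the zero vector, verify the hypotheses of Lemma~\ref{lem:bases:vectors}, and read off $V$. The paper's proof is a one-liner citing only Lemma~\ref{lem:bases:semigrp}; you have actually written out the case analysis (multiplicatively dependent versus independent) and correctly noted that Lemma~\ref{lem:bases:dependent} is also needed, which the paper leaves implicit.
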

\begin{proof}
	By Lemma \ref{lem:bases:semigrp}, the set
	\[
		S = \{(0,0,\dots) \} \cup \set{ (\nu_{p_1}(k), \nu_{p_2}(k), \dots) }{ \bb a \text{ is asymptotically $k$-automatic}}
	\] 
	satisfies the assumptions of Lemma \ref{lem:bases:vectors}.
\end{proof}

It seems plausible that the following converse to Corollary \ref{cor:bases:form} is also true. 

\begin{conjecture}
	Let $V < \bigoplus_{i=1}^\infty \QQ$ be a vector subspace. Then there exists a sequence $\bb a$ over the alphabet $\{0,1\}$ such that for $k \in \NN$ with $k > 1$, $\bb a$ is asymptotically $k$-automatic if and only if $(\nu_{p_1}(k), \nu_{p_2}(k), \dots) \in V$.
\end{conjecture}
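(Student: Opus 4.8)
By Corollary \ref{cor:bases:form} --- writing $\bb e(k) := (\nu_{p_1}(k),\nu_{p_2}(k),\dots)$ --- the set of bases with respect to which a given sequence $\bb b$ is asymptotically automatic is always of the form $\set{k>1}{\bb e(k)\in W}$ for some subspace $W$, and the conjecture asks for the converse. I would start from two reductions. First, since $S := \set{\bb e(k)}{k>1,\ \bb e(k)\in V}\cup\{0\}$ satisfies the hypotheses of Lemma \ref{lem:bases:vectors} (exactly as in the proof of Corollary \ref{cor:bases:form}), one has $S = \linspan_\QQ S \cap \bigoplus_i\NN_0$; since the target base set $\set{k>1}{\bb e(k)\in V}$ depends only on $S$, we may replace $V$ by $\linspan_\QQ S$ and thus assume $V$ is spanned by vectors $\bb e(k)$ with $k>1$. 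Second, $S$ is then countably generated as a monoid: for $\dim V<\infty$ this is Gordan's lemma for the rational cone $V\cap\bigoplus_i\RR_{\geq0}$, and in general $S=\bigcup_n(S\cap\QQ^n)$ is an increasing union of finitely generated monoids. Fix a generating family $\bb e(k_1),\bb e(k_2),\dots$ with $k_j>1$. Two extreme cases are then immediate: if $V=0$ the target base set is empty and one may take any binary sequence of superlinear asymptotic subword complexity, which by the linear bound of \S\ref{sec:subword} is asymptotically $k$-automatic for no $k$; if $V=\bigoplus_i\QQ$ the constant sequence $0^\infty$ works.

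In the main case the plan is to build, for each finite set $G$ of bases, a binary sequence $\bb a_G$ that is asymptotically $k$-automatic for every $k\in G$ --- extending the construction of \cite{JK-Cobham-asympt}, which handles $G=\{2,3\}$ --- and, crucially, such that $\bb a_G$ is \emph{not} asymptotically $\ell$-automatic for any $\ell$ with $\bb e(\ell)\notin\linspan_\QQ\set{\bb e(k)}{k\in G}$. By Corollary \ref{cor:bases:form} the base set of $\bb a_G$ is $\set{\ell>1}{\bb e(\ell)\in W_G}$ for some $W_G\supseteq\linspan_\QQ\set{\bb e(k)}{k\in G}$, and the point is to force equality; the natural device is to arrange that $\bb a_G$ is not asymptotically invariant under any nontrivial shift, so that the Cobham-type result of \S\ref{sec:bases} (asymptotic automaticity in two multiplicatively independent bases implies asymptotic shift-invariance) rules out every $\ell$ multiplicatively independent from $G$, while multiplicatively dependent bases are controlled by Lemmas \ref{lem:bases:dependent} and \ref{lem:bases:semigrp}. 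This would finish the finite-dimensional case, the base set then being exactly $T:=\set{k>1}{\bb e(k)\in V}$. When $(k_j)_j$ is infinite, one would further interleave these sequences, placing a copy of $\bb a_{\{k_1,\dots,k_j\}}$ on a block $[c_j,c_{j+1})$ of positions with $c_j/c_{j+1}\to0$; for each fixed $k$ the finitely many initial blocks not yet ``compatible'' with $k$ form a density-zero set of positions and so do not affect asymptotic $k$-automaticity (Lemma \ref{lem:basic:almost-eq}), which ought to yield a sequence with base set $\bigcup_m\set{k>1}{\bb e(k)\in\linspan_\QQ\set{\bb e(k_i)}{i\leq m}}=T$.

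The main obstacle is the word ``crucially'' above. Already for $G=\{2,3\}$, ruling out asymptotic $\ell$-automaticity for $\ell$ divisible by a prime $p\geq5$ is exactly the statement flagged as elusive in the discussion preceding Lemma \ref{lem:bases:dependent}: the \cite{JK-Cobham-asympt} example is asymptotically $2$- and $3$-automatic, but its status in other bases is open. What is really needed is a \emph{uniform} tool --- excluding, for a prescribed sequence, all bases outside a prescribed multiplicative span --- and the shift-invariance dichotomy of \S\ref{sec:bases} seems too weak for this; plausibly one needs a genuine density analogue of Cobham's theorem. A secondary, more technical obstacle concerns the interleaving in the infinite-dimensional case: a concatenation of infinitely many genuinely distinct automatic sequences is not automatic, so the family $(\bb a_{\{k_1,\dots,k_m\}})_m$ would have to be chosen coherently --- for instance so that each extends the previous one on a long prefix, or so that all of them are built over a common automatic ``skeleton'' --- for the density-zero-modification argument to be valid. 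It is chiefly because of the first obstacle that the statement is recorded here only as a conjecture.
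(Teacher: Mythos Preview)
The paper records this statement as a \emph{conjecture} and offers no proof; your proposal is likewise not a proof but an outline with honestly acknowledged gaps, and your diagnosis of the central difficulty --- that one cannot currently exclude asymptotic $\ell$-automaticity for bases $\ell$ outside the prescribed span, already in the case $G=\{2,3\}$ --- matches exactly what the paper flags as open just before Lemma~\ref{lem:bases:dependent}. In that sense your assessment is in line with the paper.

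One point in your sketch deserves sharpening. The ``natural device'' you propose --- arranging that $\bb a_G$ is \emph{not} asymptotically shift-invariant, so as to invoke the Cobham-type result from \cite{JK-Cobham-asympt} to exclude foreign bases --- is not merely ``too weak'' but internally inconsistent whenever $G$ contains two multiplicatively independent bases. Indeed, that very result says that a sequence asymptotically automatic in two such bases \emph{must} be asymptotically shift-invariant; hence for $G=\{2,3\}$ any $\bb a_G$ with the desired automaticity properties is automatically shift-invariant, and the device cannot be deployed at all. So the obstacle is not that the shift-invariance criterion fails to discriminate finely enough, but that it is unavailable in precisely the cases of interest. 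Your conclusion --- that a genuine density analogue of Cobham's theorem would be needed --- remains the right one, but the route through non-shift-invariance should be discarded outright rather than described as a first approximation.
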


Let us briefly explain the rationale behind the conjecture above. Pick a set of primes $P$. We will construct a candidate for a sequence that is asymptotically $k$-automatic if and only is $k$ is a product of primes from $P$. (The fact that we are dealing specifically with primes does not seem to be crucial, but it makes the discussion somewhat simpler.)

Assume first that $P = \{p_1,p_2,\dots,p_r\}$ is finite. Let $\cD \subset \NN$ be the set of all products of primes in $P$,
\[
	\cD = \set{ p_1^{i_1} p_2^{i_2} \dots p_r^{i_r} }{ i_1,i_2,\dots,i_r \in \NN_0}.
\]
We use $\cD$ to partition $\NN$ into intervals $I = [N,M)$ where $N < M$ are consecutive elements of $\cD$. Let us say that the interval $I$ mentioned above is \emph{exceptional} if there exists a prime $p \in P$ such that $p \nmid N$ or $p \nmid M$. If $I$ is not exceptional then for any $p \in P$, the integers $N/p$  $M/p$ are again consecutive elements of $\cD$. This motivates us to consider a sequence $\bb a$ that is constant between consecutive elements of $\cD$. For concreteness, writing $N = p_1^{i_1} p_2^{i_2} \dots p^{i_r}$ and $M = p_1^{j_1} p_2^{j_2} \dots p^{j_r}$, we can set
\begin{align*}
	a_n &= i_1 + i_2 + \dots + i_r + j_1 + j_2 + \dots + j_r \bmod 1 \text{ for } n \in I.
\end{align*}
If $I$ is not exceptional then for $n \in I$ we have $a_{n} = a_{\ip{n/p}}$. In other words, letting $E$ denote the union of all exceptional intervals, we have $a_{n} = a_{\ip{n/p}}$ for all $n \in \NN \setminus E$. We conjecture that $E$ has density zero; this is trivial when $r = 1$, a simple lemma proved in \cite{JK-Cobham-asympt} when $r = 2$, and open for $r \geq 3$. If this conjecture is true then the sequence $\bb a$ defined above is asymptotically $p$-automatic for all $p \in P$, and hence also for all $k$ that are products of primes in $P$. (Indeed, $\tilde \cN_p(\bb a)$ has only one element.) On the other hand, when $k$ has prime factors not belonging to $P$, there seems to be no particular reason why $\bb a$ should be asymptotically $k$-automatic --- hence, we can reasonably conjecture that  that $\bb a$ is not asymptotically $k$-automatic. 

In the case where $P = \{p_1,p_2,\dots\}$ is infinite, we modify the construction above, setting
\[
	\cD = \set{ N = p_1^{i_1} p_2^{i_2} \dots p_r^{i_r} }{ i_1,i_2,\dots,i_r \in \NN_0, r \leq \ell(N)},
\]
where $\ell \colon \NN  \to \NN$ is non-decreasing and $\ell(N)$ tends to infinity very slowly as $N \to \infty$. We then proceed like above. Assuming that the discussion in the finite case goes through (meaning that the two conjectures we put forward are indeed true) then we can derive the infinite case by a limiting argument. (Since the discussion is largely conjectural, we skip further details.)

\newcommand{\freqsup}{\overline{\operatorname{freq}}}
\newcommand{\freqinf}{\underline{\operatorname{freq}}}
\newcommand{\freq}{{\operatorname{freq}}}

\newcommand{\freqsuplog}{\overline{\operatorname{freq}}^{\log}}
\newcommand{\freqinflog}{\underline{\operatorname{freq}}^{\log}}
\newcommand{\freqlog}{{\operatorname{freq}}^{\log}}

\newcommand{\good}{\mathrm{g}}
\newcommand{\bad}{\mathrm{b}}

\section{Frequencies of symbols}\label{sec:freq}

\subsection{Logarithmic frequencies}\color{double}

For an sequence $\bb a$ over a finite alphabet $\Omega$ and for a symbol $\omega \in \Omega$, we define the lower and upper (asymptotic) frequencies of $\omega$ in $\bb a$:
\begin{align}\label{eq:freq:def-freq}
\freqsup_{\bb a}(\omega) &:= \overline{d}\bra{\set{n \in \NN_0}{a_n = \omega }} = \limsup_{N \to \infty} \frac{\# \set{n \in [N]}{a_n = \omega} }{N}, \\
\freqinf_{\bb a}(\omega) &:= \underline{d}\bra{\set{n \in \NN_0}{a_n = \omega }} = \liminf_{N \to \infty} \frac{\# \set{n \in [N]}{a_n = \omega} }{N}.
\end{align} 
More generally, for a word $w \in \Omega^\ell$, we define
\begin{align}\label{eq:freq:def-freq-word}
\freqsup_{\bb a}(w) &:= \overline{d}\bra{\set{n \in \NN_0}{ a_{[n,n+\ell)} = w }}, \\
\freqinf_{\bb a}(w) &:= \underline{d}\bra{\set{n \in \NN_0}{a_{[n,n+\ell)} = w }}.
\end{align} 
When $\freqsup_{\bb a}(w) = \freqinf_{\bb a}(w)$, the common value is called the frequency of $w$ is $\bb a$ and is denoted by $\freq_{\bb a}(w)$.
Replacing $\overline{d}$ and $\underline{d}$ with upper and lower logarithmic densities given by
\begin{align*}
	\overline{d}^{\log}(A) &= \limsup_{N \to \infty} \frac{1}{\log N} \sum_{n=0}^{N-1} \frac{1_A(n)}{n+1}, &
	\underline{d}^{\log}(A) &= \liminf_{N \to \infty} \frac{1}{\log N} \sum_{n=0}^{N-1} \frac{1_A(n)}{n+1},	
\end{align*}
we similarly obtain the notions of upper and lower logarithmic frequencies
\begin{align}\label{eq:freq:def-freq-word-log}
\freqsuplog_{\bb a}(w) &:= \overline{d}^{\log}\bra{\set{n \in \NN_0}{ a_{[n,n+\ell)} = w }}, \\
\freqinflog_{\bb a}(w) &:= \underline{d}^{\log}\bra{\set{n \in \NN_0}{a_{[n,n+\ell)} = w }},
\end{align} 
as well as the notion of logarithmic frequency, $\freqlog_{\bb a}(w) =\freqsuplog_{\bb a}(w) = \freqinflog_{\bb a}(w)$. 

When $\bb a$ is automatic, the frequencies of symbols are not guaranteed to exist, as shown by the basic example $a_n = \abs{(n)_2} \bmod 2$. However, the logarithmic frequencies exist for all symbols, and more generally for all words \cite[Thm.\ 8.4.9]{AlloucheShallit-book}.
Thus, it may come as a surprise that there exist asymptotically automatic sequences such that logarithmic frequencies of symbols do not exist.

\begin{proposition}\label{prop:freq:not-converge}
	There exists an asymptotically $2$-automatic sequence $\bb a$ over the alphabet $\{0,1\}$ such that $\freqsuplog_{\bb a}(1) = 1$ and $\freqinflog_{\bb a}(1) = 0$. In particular,  $\freqlog_{\bb a}(1)$ does not exist.
\end{proposition}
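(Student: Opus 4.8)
The plan is to take for $\bb a$ (up to a density-zero change) the level set of the partial sums modulo $2$ of a very sparse $0$--$1$ sequence, and to read off the oscillation of the logarithmic frequency from the placement of the ``flip points''. Concretely, fix a strictly increasing sequence $(c_j)_{j\ge 0}$ of positive integers growing so fast that $c_{2j}/c_{2j+1}\to 0$ and $j\,c_{2j-1}/c_{2j}\to 0$ (for instance $c_j=2^{2^j}$). Let $\chi=(\chi_n)_{n=0}^\infty$ be the indicator sequence of $\set{2^{c_j}}{j\ge 0}$ and set $\bb a:=\Sigma\chi$, the sequence of partial sums formed in the finite abelian monoid $\ZZ/2\ZZ$, so that $a_n=\bigl(\#\set{j\ge 0}{2^{c_j}<n}\bigr)\bmod 2$; thus $a_n=1$ precisely when $n\in\bigcup_{j\ge 0}[2^{c_{2j}},2^{c_{2j+1}})$. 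First I would note that $\chi$ is supported on a subset of the powers of $2$, hence $\chi\simeq\mathbf{0}$, so $\chi$ is asymptotically $2$-automatic by Lemma~\ref{lem:basic:almost-eq}; then Lemma~\ref{lem:basic:partial-sums}, applied with $S=\ZZ/2\ZZ$, shows that $\bb a$ is asymptotically $2$-automatic. This really is the substantive step: the operation $\bb c\mapsto\Sigma\bb c$ does not respect $\simeq$, so even though $\chi\simeq\mathbf{0}$ the resulting $\bb a$ is far from trivial, and the whole point of the construction is that Lemma~\ref{lem:basic:partial-sums} nevertheless keeps the kernel of $\bb a$ finite up to $\simeq$.

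It then remains to compute the logarithmic frequency of the symbol $1$; since density-zero modifications affect neither asymptotic automaticity nor any logarithmic density, I may work directly with $A:=\set{n}{a_n=1}=\bigcup_j[2^{c_{2j}},2^{c_{2j+1}})$ up to density zero. Writing $H_N=\sum_{n<N}\tfrac1{n+1}=\ln N+O(1)$, for $N=2^{c_{2J+1}}$ one obtains
\[
\sum_{n<N}\frac{1_A(n)}{n+1}=\sum_{j\le J}\bra{H_{2^{c_{2j+1}}}-H_{2^{c_{2j}}}}=\ln 2\sum_{j\le J}\bra{c_{2j+1}-c_{2j}}+O(J),
\]
and dividing by $\ln N=c_{2J+1}\ln 2$ yields a quantity which is at least $1-c_{2J}/c_{2J+1}+o(1)\to 1$ and always at most $1+o(1)$, so $\freqsuplog_{\bb a}(1)=1$. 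For $N=2^{c_{2J}}$ the same sum equals $\ln 2\sum_{j<J}\bra{c_{2j+1}-c_{2j}}+O(J)\le \ln 2\cdot J\,c_{2J-1}+O(J)$, which after division by $\ln N=c_{2J}\ln 2$ is at most $J\,c_{2J-1}/c_{2J}+o(1)\to 0$, so $\freqinflog_{\bb a}(1)=0$. In particular $\freqlog_{\bb a}(1)$ does not exist.

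The one genuine obstacle is the verification that $\bb a$ is asymptotically $2$-automatic; I would handle it exactly as above, by isolating the sparse ``impulse'' sequence $\chi$ (asymptotically automatic for the trivial reason that $\chi\simeq\mathbf{0}$) and invoking closure under partial sums over a finite abelian monoid. Everything after that is a routine harmonic-sum estimate, and the choice of a doubly-exponentially growing $(c_j)$ is precisely what pushes the two one-sided logarithmic frequencies all the way to $1$ and to $0$.
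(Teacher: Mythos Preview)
Your sequence $\bb a=\Sigma\chi$ is \emph{not} asymptotically $2$-automatic, so the argument breaks at exactly the step you yourself flagged as the crucial one. For each $i\ge 0$ the kernel element $(a_{2^i n})_n$ equals $1$ precisely on $\bigcup_j(2^{c_{2j}-i},2^{c_{2j+1}-i}]$ (up to finitely many $n$), and for $i<i'$ these two sequences disagree on $\bigcup_j(2^{c_j-i'},2^{c_j-i}]$; at $N=2^{c_J-i}$ the last such interval alone has length $(1-2^{i-i'})N$, so the upper density of the disagreement set is at least $1-2^{i-i'}>0$. Hence the classes of $(a_{2^i n})_n$ for $i=0,1,2,\dots$ are pairwise distinct and $\#\tilde\cN_2(\bb a)=\infty$. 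In fact your $\chi$ is a counterexample to Lemma~\ref{lem:basic:partial-sums} as stated: although $\tilde\cN_2(\chi)$ is a singleton, the partial sums of the sequences in $\cN_2(\chi)$ (the indicators of $\{2^{c_j-i}:c_j\ge i\}$ for $i=0,1,2,\dots$) are pairwise non-$\simeq$, so the assertion ``$\cS/{\simeq}$ is finite'' in that proof does not follow from finiteness of $\tilde\cN_k$ alone, and you cannot lean on that lemma here.

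The paper's construction avoids this by arranging $a_n=a_{2n}=a_{n+1}$ for almost all $n$ \emph{by design}, which gives $\#\tilde\cN_2(\bb a)=1$ without any appeal to closure under $\Sigma$. It makes $\bb a$ constant on the blocks $[H_i,H_{i+1})$ of the increasing enumeration $(H_i)$ of $\{2^\alpha 3^\beta\}$, with value depending only on the exponent $\beta_i$; since doubling sends $H_i=2^{\alpha_i}3^{\beta_i}$ to another element of this set with the same $\beta$-exponent and $H_{i+1}/H_i\to 1$, one obtains $a_{2n}=a_n$ off a density-zero set. Your logarithmic-frequency computation is fine in spirit, but the coarse block structure $[2^{c_{2j}},2^{c_{2j+1}})$ is not stable under $n\mapsto 2n$ up to density zero; the $3$-smooth refinement of the blocks is precisely what repairs this.
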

\begin{proof}

	Following the construction in \cite[Sec.\ 4]{JK-Cobham-asympt}, let
\(
(H_i)_{i=0}^\infty = \bra{2^{\a_i} 3^{\b_i} }_{i=0}^\infty 
\)
 be the increasing enumeration of the set 
\begin{equation}\label{eq:70:cH}
\cH = \set{2^\a 3^\b}{\a,\b \in \NN_0} = \{ 1,2,3,4,6,8,9,12,\dots\}.
\end{equation}
Let $(\gamma_j)_{j=0}^\infty$ be an increasing sequence of integers with $\gamma_0 = 0$, which will be specified in the course of the argument. 
We define the sequence $\bb a$ by putting $a_0 = 0$ and for each $j \in \NN_0$ and each $i \in \NN_0$ such that $\b_i \in [\gamma_j,\gamma_{j+1})$ letting
\begin{equation}\label{eq:70:def-f}
	a_n = j \bmod 2 \in \{0,1\} \text{ for } n \in [H_i,H_{i+1}).
\end{equation}
It follows by the same argument as in  \cite[Lem.\ 4.2]{JK-Cobham-asympt} that 
\begin{equation}\label{eq:70:inv-f}
	a_n = a_{n+1} = a_{2n} \text{ for almost all } n \in \NN_0
\end{equation}
and as a consequence $\bb a$ is asymptotically $2$-automatic. 

We plan to ensure that for each $j \in \NN$, letting $N = 3^{\gamma_j}$, we have
\begin{equation}\label{eq:70:limit-j}
	\frac{1}{\log N}\sum_{n=0}^{N-1} \frac{a_n}{n+1} \in 
	\begin{cases}
		[0,2^{-j}) & \text{if }  j \equiv 1 \bmod 2,\\
		(1-2^{-j},1] & \text{if } j \equiv 0 \bmod 2.
	\end{cases} 	
\end{equation}
Once \eqref{eq:70:limit-j} has been proved, letting $j \to \infty$ we conclude that  $\freqsuplog_{\bb a}(1) = 1$ and $\freqinflog_{\bb a}(1) = 0$, as needed. We also note that \eqref{eq:70:limit-j} only depends on $\gamma_0,\gamma_1,\dots,\gamma_j$ and not on $\gamma_{j+1},\gamma_{j+2},\dots$ (this is because each $n \in [1,N) = [1,3^{\gamma_{j}})$ belongs to an interval $[H_i,H_{i+1}) \subset [1,3^{\gamma_{j}})$ which satisfies $\beta_i < \gamma_j$). Our plan is to let $\gamma_j$ grow very rapidly with $j$. Thus, it will suffice to show that, given any values of $\gamma_0,\gamma_1,\dots,\gamma_{j-1}$, condition \eqref{eq:70:limit-j} holds for all sufficiently large values of $\gamma_j$. For concreteness, suppose that $j \equiv 1 \bmod 2$. For each interval $[H_i,H_{i+1}) \subset [1,3^{\gamma_j})$, if $\beta_i \geq \gamma_{j-1}$ then we have $a_n = 0$ for all $n \in [H_i,H_{i+1})$. Using the rough estimate $a_n \leq 1$ for all remaining $n$ and applying Lemma \ref{lem:freq:beta-is-large} below concludes the argument.
\end{proof}

\begin{lemma}\label{lem:freq:beta-is-large}
	For each $\beta \in \NN_0$, we have
	\begin{equation}\label{eq:935:1}
		\lim_{k \to \infty} \frac{1}{\log H_k} 
		\sum_{ i < k,\, \beta_i = \beta } \sum_{n = H_i}^{H_{i+1}-1} \frac{1}{n+1} = 0.
	\end{equation}
\end{lemma}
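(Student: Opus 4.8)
The plan is to estimate, for a fixed $\beta \in \NN_0$, the sum over those indices $i < k$ with $\beta_i = \beta$ of the ``local logarithmic mass'' $\sum_{n=H_i}^{H_{i+1}-1} \tfrac{1}{n+1}$, and show this is $o(\log H_k)$. The key observation is that the indices $i$ with $\beta_i = \beta$ correspond precisely to the elements of $\cH$ of the form $2^\a 3^\beta$ with $\a \in \NN_0$; these are $3^\beta, 2 \cdot 3^\beta, 4 \cdot 3^\beta, \dots, 2^\a 3^\beta, \dots$, but they are \emph{not} consecutive in the enumeration $(H_i)$ — between $2^\a 3^\beta$ and $2^{\a+1} 3^\beta$ there are other elements of $\cH$ (namely those $2^{\a'} 3^{\beta'}$ with $\beta' \neq \beta$ lying in that range), so $H_{i+1}$ may be strictly smaller than the ``next power of two times $3^\beta$''. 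In particular, $H_{i+1} \leq 2 H_i$ whenever $\beta_i = \beta$, since $2H_i = 2^{\a+1} 3^\beta \in \cH$ is a candidate for the next element. Hence for each such $i$,
\[
	\sum_{n=H_i}^{H_{i+1}-1} \frac{1}{n+1} \leq \sum_{n=H_i}^{2H_i - 1} \frac{1}{n+1} \leq \log 2 + O(1/H_i) = O(1).
\]

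Next I would count how many indices $i < k$ satisfy $\beta_i = \beta$. Since $H_i \geq 3^\beta$ for all such $i$ and the largest is at most $H_k$, these indices correspond to $\a$ with $2^\a 3^\beta \leq H_k$, i.e.\ $\a \leq \log_2(H_k / 3^\beta) = O(\log H_k)$. Therefore the number of terms in the sum \eqref{eq:935:1} is $O(\log H_k)$, and combined with the bound $O(1)$ on each term this only gives $O(\log H_k)$ for the whole sum — which is \emph{not} enough. So the crude per-term bound must be refined: I would instead sum the exact contributions. Writing $i(\a)$ for the index with $H_{i(\a)} = 2^\a 3^\beta$, we have
\[
	\sum_{n = H_{i(\a)}}^{H_{i(\a)+1}-1} \frac{1}{n+1} \leq \log\!\left( \frac{H_{i(\a)+1}}{H_{i(\a)}} \right) + O\!\left(\frac{1}{H_{i(\a)}}\right),
\]
and the point is that $\sum_\a \log(H_{i(\a)+1}/H_{i(\a)})$ telescopes along a \emph{sparse} subsequence: it is bounded by the total log-length $\log(2^{A} 3^\beta) - \log(3^\beta) = A \log 2$ where $2^A 3^\beta$ is the largest such element below $H_k$, MINUS the log-lengths of all the gaps that are skipped over. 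Equivalently, $\sum_\a \log(H_{i(\a)+1}/H_{i(\a)}) \leq \log(H_k/3^\beta) - \sum (\text{skipped gaps})$, and since $\cH$ has logarithmic density $\frac{1}{\log 2} + \frac{1}{\log 3}$ worth... — this is the crux.

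Here is the cleaner way I would actually run it. Fix $\beta$. For $i$ with $\beta_i = \beta$ and $H_i = 2^\a 3^\beta$, the interval $[H_i, H_{i+1})$ is contained in $[2^\a 3^\beta, 2^{\a+1} 3^\beta)$, and these containing intervals, as $\a$ ranges over $0,1,2,\dots$, are \emph{pairwise disjoint} and each has logarithmic length exactly $\log 2$. Thus
\[
	\sum_{i < k,\, \beta_i = \beta}\; \sum_{n=H_i}^{H_{i+1}-1} \frac{1}{n+1} \;\leq\; \sum_{\a \,:\, 2^\a 3^\beta \leq H_k}\; \sum_{n = 2^\a 3^\beta}^{2^{\a+1}3^\beta - 1} \frac{1}{n+1}.
\]
But this again is $\Theta(\log H_k)$. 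The genuine saving is that $H_{i+1}$ is typically \emph{much} smaller than $2^{\a+1}3^\beta$: for large $\a$, the set $\cH \cap [2^\a 3^\beta, 2^{\a+1}3^\beta)$ has roughly $\frac{\log 2}{\log 3}\cdot\a$ elements (those with a different power of $3$), so $H_{i+1}/H_i \approx 2^{1/\a} \to 1$. Quantitatively, $\log(H_{i+1}/H_i) \leq \log 2 / (\#\{\cH \cap [H_i, 2H_i)\})$, and the number of elements of $\cH$ in $[2^\a 3^\beta, 2 \cdot 2^\a 3^\beta)$ is $\gg \a$ (take $2^{\a'}3^{\beta'}$ with $\beta'$ running and $\a'$ adjusted). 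Summing $\sum_{\a \leq A} 1/\a = O(\log A) = O(\log \log H_k)$ gives
\[
	\sum_{i<k,\,\beta_i=\beta} \sum_{n=H_i}^{H_{i+1}-1}\frac{1}{n+1} = O(\log\log H_k) + O(1) = o(\log H_k),
\]
which is exactly \eqref{eq:935:1} after dividing by $\log H_k$.

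The main obstacle, and the step deserving the most care, is precisely this last density estimate: proving a clean lower bound of the form $\#\{\cH \cap [x, 2x)\} \gg \log x$ uniformly, and converting it into the per-gap bound $\log(H_{i+1}/H_i) \ll (\log\log H_i)/\log H_i$ \emph{on average over the sparse subsequence $\beta_i = \beta$}. One clean route avoiding delicate counting: observe that $\sum_{i < k} \log(H_{i+1}/H_i) = \log(H_k) - \log(H_0) = \log H_k$ (a full telescoping over \emph{all} $i$), and that the terms with $\beta_i = \beta$ are a vanishingly small proportion — more precisely, I would show that the function $i \mapsto \log(H_{i+1}/H_i)$ restricted to $\{\beta_i = \beta\}$ contributes $o(\log H_k)$ by comparing with the integral / using that $\cH$ is Besicovitch-like with logarithmic density, i.e.\ $\#\{i : H_i \leq x\} \sim \frac{(\log x)^2}{2\log 2 \log 3}$, so among the $\asymp (\log x)^2$ indices below $x$ only $\asymp \log x$ have $\beta_i = \beta$, and the logarithmic weight $\log(H_{i+1}/H_i)$ is roughly equidistributed, giving the restricted sum $\asymp \log x \cdot \frac{\log x}{(\log x)^2} = O(1)$ — in fact bounded. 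I would present the argument in the disjoint-containing-intervals form above together with the $\#\{\cH \cap [x,2x)\} \gg \log x$ bound, since that is the most self-contained, and remark that even the weaker bound $O((\log\log H_k)^{O(1)})$ suffices for the $o(\log H_k)$ conclusion.
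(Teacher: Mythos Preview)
Your core insight---that the gap ratios $H_{i+1}/H_i$ tend to $1$---is exactly what drives the proof, but your execution has a genuine gap and is far more elaborate than needed. The pointwise inequality you write,
\[
\log\bra{H_{i+1}/H_i} \;\leq\; \frac{\log 2}{\#\bra{\cH \cap [H_i, 2H_i)}},
\]
is not valid: knowing that $[H_i,2H_i)$ contains many elements of $\cH$ bounds only the \emph{average} log-gap in that window, not the particular gap from $H_i$ to its immediate successor. Nothing prevents the first gap from being of order $\log(3/2)$ while all the remaining gaps are tiny. So the chain of inequalities leading to your $O(\log\log H_k)$ bound does not go through as written, and the looser ``equidistribution'' heuristic you sketch afterwards is not a proof either.

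The paper's argument sidesteps all of this by using only the \emph{qualitative} fact, quoted from \cite[Lem.~4.1]{JK-Cobham-asympt}, that $H_{i+1}/H_i \to 1$ as $i \to \infty$. Given $\e>0$, choose $i_1$ so that $H_{i+1} \leq (1+\e)H_i$ for all $i \geq i_1$; then for each such $i$ the inner sum is at most $(H_{i+1}-H_i)/H_i \leq \e$. Combined with your own (correct) count that there are at most $\log_2 H_k + O(1)$ indices $i<k$ with $\beta_i=\beta$, this gives
\[
S_k \leq O_\e(1) + \e \log_2 H_k,
\]
hence $\limsup_{k\to\infty} S_k/\log H_k \leq \e/\log 2$, and letting $\e \to 0$ finishes. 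No rate of decay of the gaps, no counting in dyadic windows, no density comparison---just ``gaps eventually below $\e$'' times ``$O(\log H_k)$ terms''. Your proposal essentially rediscovers both ingredients but then tries to extract an unnecessary quantitative rate, and that is where the error creeps in.
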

\begin{proof}
	Let $S_k$ denote the sum in \eqref{eq:935:1} and pick $\e > 0$. By \cite[Lem.\ 4.1]{JK-Cobham-asympt} we have $H_{i+1}/H_i \to 0$ as $i \to \infty$. Hence, we can pick $i_1$ such that $H_{i+1} \leq (1+\e)H_i$ for all $i \geq i_1$. Splitting the outer sum in \eqref{eq:935:1} at $i_1$ and estimating $1/(n+1)$ from above by $1/H_i$ for $n \in [H_i,H_{i+1})$, we obtain
	\begin{equation}\label{eq:935:2}
		S_k \leq O_{\e}(1) + \e \cdot \# \set{ i }{ i_1 \leq i < k,\ \beta_i = \beta} \leq O_{\e}(1) + \e \log_2(H_k). 
	\end{equation}
	It follows that 
	\begin{equation}\label{eq:935:3}
		\limsup_{k \to \infty} \frac{S_k}{\log H_k} \leq \frac{\e}{\log 2}.
	\end{equation}
	Letting $\e \to 0$ concludes the proof.
\end{proof}

\subsection{Rationality of frequencies}

Another notable property of automatic sequences is that frequencies of symbols, if they exist, are rational \cite[Thm.\ 8.4.5(b)]{AlloucheShallit-book}. We show that this property also does not extend to asymptotically automatic sequences, which can have arbitrary frequencies of symbols. In particular, this shows that there are uncountably many distinct asymptotically automatic sequences (up to equality almost everywhere and renaming symbols), in contrast with automatic sequences which are easily seen to be countable.

\begin{proposition}
	For each $\theta \in [0,1]$, there exists an asymptotically $2$-automatic sequence $\bb a$ over the alphabet $\{0,1\}$ such that $\freq_{\bb a}(1) = \theta$.
\end{proposition}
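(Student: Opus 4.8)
The plan is to reuse the architecture of the construction in Proposition~\ref{prop:freq:not-converge}, but to tune the parameters so that the cumulative (ordinary, not logarithmic) density of $1$'s converges to $\theta$ rather than oscillating. As before, let $(H_i)_{i=0}^\infty$ be the increasing enumeration of $\cH = \set{2^\a 3^\b}{\a,\b \in \NN_0}$, and build $\bb a$ as a block sequence that is constant on each interval $[H_i,H_{i+1})$, with the value on $[H_i,H_{i+1})$ depending only on $\b_i$ through a threshold rule of the shape $a_n = 1_{\b_i \in E}$ for a suitable set $E \subset \NN_0$ (a union of intervals $[\gamma_j,\gamma_{j+1})$). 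The identity $H_{i+1}/H_i \to 1$ from \cite[Lem.\ 4.1]{JK-Cobham-asympt}, together with the block structure and the rule depending only on $\b_i$, guarantees (exactly as in \cite[Lem.\ 4.2]{JK-Cobham-asympt}) that $a_n = a_{n+1} = a_{2n}$ for almost all $n$, so $\bb a$ is automatically asymptotically $2$-automatic regardless of the choice of the thresholds $\gamma_j$.

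First I would reduce the problem to controlling a single quantity: for $N = 3^{m}$, the count $\#\{0 \le n < N : a_n = 1\}$ equals $\sum_{i : H_{i+1} \le N,\ \b_i \in E}(H_{i+1}-H_i)$ plus a negligible boundary term, and since $[1,3^{m}) = [1,N)$ is exactly covered by the blocks $[H_i,H_{i+1})$ with $\b_i < m$, this is
\[
	\sum_{b=0}^{m-1} 1_E(b) \sum_{i : \b_i = b,\ H_{i+1}\le N} (H_{i+1}-H_i) \approx \sum_{b=0}^{m-1} 1_E(b)\, c_b\, 3^{b},
\]
where $c_b \approx \sum_{i : \b_i = b}(H_{i+1}-H_i)/3^b$ is a bounded positive constant (the $i$'s with $\b_i = b$ correspond to $\a$ ranging so that $2^\a 3^b$ lies in a bounded-ratio range, and the telescoping sum of block lengths over such $i$ is comparable to $3^b$). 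The point is that $N = 3^m$ itself is $\asymp 3^m$, so the density of $1$'s at $N = 3^m$ is essentially a weighted average $\sum_{b<m} 1_E(b) c_b 3^b / \sum_{b<m} c_b 3^b$ of the indicator of $E$ against the rapidly-growing weights $w_b := c_b 3^b$. Because $w_b$ grows geometrically, this average is dominated by the largest few values of $b$ below $m$, i.e. by the recent history of $1_E$; this is precisely the mechanism that in Proposition~\ref{prop:freq:not-converge} forces the oscillation, and here it is exactly the lever I want.

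The main step is then a greedy/inductive choice of the set $E$. Given any target $\theta \in [0,1]$, I will choose the thresholds $\gamma_j$ so that the partial-average sequence $A_m := \sum_{b<m} 1_E(b) w_b / \sum_{b<m} w_b$ converges to $\theta$: whenever $A_m < \theta$ I put the next block of $b$'s into $E$ (pushing the average up toward the contribution of the new large weights), and whenever $A_m \ge \theta$ I keep them out; since a single new weight $w_{m}$ can be made to exceed the entire prior sum $\sum_{b<m} w_b$ by a bounded factor, one controls the overshoot, and since $w_m$ can also be made a vanishingly small fraction of $\sum_{b \le m} w_b$ over a long stretch, one controls how finely $A_m$ can be steered; a standard "follow the target" argument then gives $A_m \to \theta$, and hence $\freq_{\bb a}(1) = \theta$ along $N = 3^m$. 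The remaining work is to upgrade convergence along $N = 3^m$ to convergence along all $N$: for $N$ between $3^m$ and $3^{m+1}$ the density changes by at most the relative size of the blocks with $\b_i = m$, which is $O(1)$ times $3^m / N \le 1$ — so this is where one actually has to be a little careful, choosing $E$ to be constant on the last block (do not flip $1_E$ at level $b = m-1$ and $b=m$ consecutively near the end) so that the density is monotone or near-constant across $[3^m, 3^{m+1})$; alternatively, interpolate the argument directly at scale $N$ rather than $3^m$. I expect this last point — the passage from the skeleton $N=3^m$ to all $N$ while keeping the sequence asymptotically $2$-automatic — to be the only genuine obstacle; the construction of $E$ itself is routine once the weighted-average reformulation is in place.
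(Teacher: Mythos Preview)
Your plan differs from the paper's proof, which does not use the $\cH=\{2^{\alpha}3^{\beta}\}$ block construction at all. Instead, the paper decomposes each binary expansion as $(n)_2 = u_n^{(1)}u_n^{(2)}\cdots u_n^{(r(n))}v_n$ with $\lvert u_n^{(i)}\rvert_1=i$, sets $a_n=f(u_n^{(r(n))})$, and chooses $f$ so that on each set $\{w\in\Sigma_2^m:\lvert w\rvert_1=r\}$ exactly $\lfloor\theta\binom{m}{r}\rfloor$ words are sent to $1$. One then checks $\#\tilde\cN_2(\bb a)=1$ and counts directly that the frequency of $1$ equals $\theta$.

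Your proposal, as written, contains a genuine gap in the density computation. You assert that for $N=3^m$ one has
\[
\sum_{\substack{i:\ \beta_i=b\\ H_{i+1}\le N}}(H_{i+1}-H_i)\ \approx\ c_b\,3^{b}
\]
with $c_b$ bounded, on the grounds that ``the $i$'s with $\beta_i=b$ correspond to $\alpha$ ranging so that $2^{\alpha}3^{b}$ lies in a bounded-ratio range''. This is false: for fixed $b$ the numbers $2^{\alpha}3^{b}$ with $2^{\alpha}3^{b}<3^{m}$ range all the way from $3^{b}$ up to roughly $3^{m}$, i.e.\ $\alpha$ runs through $0,1,\dots,\lfloor(m-b)\log_2 3\rfloor$. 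The sum above is dominated by the last few terms, each of size comparable to $3^{m}/m$ (since the gap after $H\in\cH$ is of order $H/\log H$), not $3^{b}$. Concretely, already for $b=0$ and $m=10$ the single block after $2^{15}=32768$ contributes $34992-32768=2224$ to a sum you claim is $O(1)$. What you have effectively analysed is the density for the naive block sequence constant on $[3^{b},3^{b+1})$, which is \emph{not} the $\cH$-construction and is \emph{not} asymptotically $2$-automatic.

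With the correct picture the weights $M_b(N):=\lvert\{n<N:\beta_{i(n)}=b\}\rvert$ are all of order $N/\log N$, roughly uniform in $b$ over $0\le b<\log_3 N$, so the ``geometric domination by the last weight'' mechanism you rely on for the greedy step disappears entirely. One can still hope to choose $E$ with natural density $\theta$ and argue that $\sum_{b\in E}M_b(N)/N\to\theta$, but this is now an equidistribution statement about the fluctuating quantities $M_b(N)$ as $N$ varies (and the extension from $N=3^{m}$ to all $N$, which you already flagged as delicate, is genuinely so here). None of that analysis is present in the proposal, and it is not routine. In short: the automaticity half of your plan is fine, but the density half rests on a miscomputation that removes the leverage your greedy argument needs; the paper's construction sidesteps these issues completely.
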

\begin{proof}
	For each $n \in \NN_0$, there is a unique decomposition of the binary expansion
	\begin{equation}\label{eq:freq:58:1}
		(n)_2 = u^{(1)}_{n} u^{(2)}_n \dots u^{(r(n))}_{n} v_{n},
	\end{equation}
	where $r(n) \in \NN_0$, $u_{n}^{(i)} \in \Sigma_2^* 1$ and $\absbig{u_{n}^{(i)}}_1 = i$ for $i = 1,2,\dots,r(n)$, and $v_n \in \Sigma_2^*$ and $\absbig{v_n}_1 \leq r(n)$. We will take $\bb a$ of the form
	\begin{equation}\label{eq:freq:58:2}
		a_n = f( u_n^{(r(n))}),
	\end{equation}	
	where $f \colon \Sigma_2^* \to \{0,1\}$ remains to be specified.

\begin{claim}\label{claim:freq:A}
	We have $\# \tilde \cN_2(\bb a) = 1$. In particular, $\bb a$ is asymptotically $2$-automatic.
\end{claim}
\begin{claimproof}
	We need to prove that for almost all $n \in \NN_0$ we have
	\begin{equation}\label{eq:freq:58:4}
		a_n = a_{2n} = a_{2n+1}.
	\end{equation}
	The decomposition of $(2n)_2$ as in \eqref{eq:freq:58:1} takes the form
	\begin{equation}\label{eq:freq:58:3}
		(n)_2 = u^{(1)}_{n} u^{(2)}_n \dots u^{(r(n))}_{n} (v_{n}0),
	\end{equation}
	and thus $u^{(r(2n))}_{2n} = u^{(r(n))}_{n}$ and $a_n = a_{2n}$ for all $n$.
	
	To prove the second equality in \eqref{eq:freq:58:4}, we begin by noting that, similarly to \eqref{eq:freq:58:3}, the decomposition of $(2n+1)_2$ as in \eqref{eq:freq:58:1} takes the form
	\begin{equation}\label{eq:freq:58:5}
		(n)_2 = u^{(1)}_{n} u^{(2)}_n \dots u^{(r(n))}_{n} (v_{n}1),
	\end{equation}
	unless $\absbig{v_{n}}_1 = r(n)$, in which case 
	\begin{equation}\label{eq:freq:58:6}
		s_2(n) = \absbig{(n)_2}_1 = 1 + 2 + \dots + r(n) + r(n) = \bra{r(n)^2+3r(n)}/2.
	\end{equation}
	Let $R = \set{\bra{r^2+3r}/2}{r \in \NN_0}$. It will suffice to show that $s_2(n) \not \in R$ for almost all $n \in \NN_0$.
	 
	 Let $L$ be a large integer and consider a random variable $\bb n$ uniformly distributed on $[2^L]$. Then $s_2(\bb n)$ has binomial distribution $\operatorname{Bin}(L,1/2)$. By Hoeffding's inequality, we have 
	\begin{equation}\label{eq:freq:58:7}
	 \PP\bra{ \abs{s_2(\bb n) - L/2} > \sqrt{L\log L} } \leq 2/L^{2}.
	\end{equation}
	A standard estimate on binomial coefficients, which is easily derived from Stirling's formula, asserts that
	\begin{equation}\label{eq:freq:58:8}
	\max_{x \in \ZZ} \PP\brabig{s_2(\bb n) = x} = 2^{-L}\binom{L}{\ip{L/2}} \ll \frac{1}{\sqrt{L}} .
	\end{equation}	 
	Thus, bearing in mind that $\# R \cap [N] = \sqrt{2N} + O(1)$, we can estimate
	\begin{align*}
	\PP\brabig{ s_2(\bb n) \in R} &\leq 
	\sum_{ \abs{x - L/2} \leq \sqrt{L\log L} } 1_R(x) \PP\brabig{s_2(\bb n) = x} 
	+ \PP\bra{ \abs{s_2(\bb n) - L/2} > \sqrt{L\log L} }
	\\ & \ll \sqrt{\frac{\log L}{L}} + \frac{1}{L^2} \to 0 \text{ as } L \to \infty.
	\end{align*}
	It follows that $s_2(n) \not \in R$ for almost all $n \in \NN_0$, as needed.
\end{claimproof}	
	
We let $f$ be any map from $\Sigma_2^*$ to $\{0,1\}$ with the property that for each $r,m \in \NN_0$ with $r \leq m$ we have
\begin{align}\label{eq:freq:def-of-f}
	\# \set{ w \in \Sigma_2^m}{ f(w) = 1,\ \abs{w}_1 = r } = \ip{\theta \binom{m}{r}}.
\end{align}
	Such a map can easily be constructed by picking, for each $r, m \in \NN_0$ with $r \leq m$, a partition $\set{ w \in \Sigma_2^m}{ \abs{w}_1 = r } = X_0 \cup X_1$ into two sets with cardinalities $\# X_1 = \ip{\theta \binom{m}{r}}$ and $\# X_0 = \ceil{(1-\theta) \binom{m}{r}}$, and then putting $f(w) = 1$ for $w \in X_1$ and $f(w) = 0$ for $w \in X_0$.
	
\begin{claim}\label{claim:freq:B}
	We have $\freq_{\bb a}(1) = \theta$.
\end{claim} 
\begin{claimproof}	
	Let $N$ be a large integer, and let $L = \ceil{\log_2 N}$. We need to approximate $\# \set{ n \in [N]}{ a_n = 1}$.	
	Towards this end, we partition $[N]$ into cells 
	\begin{align}\label{eq:freq:def-of-cell}
	E = E(r,m, u^{(1)},u^{(2)},\dots, u^{(r-1)},v),
	\end{align}
	where $r,m \in \NN_0,\ u^{(1)},u^{(2)},\dots, u^{(r-1)},v \in \Sigma_2^*$, 
	defined as the set of those $n \in [N]$ for which $r(n) = r$, $\absbig{u^{(r)}_{n}} = m$, $u^{(1)}_n = u^{(1)}$, $u^{(2)}_n = u^{(2)}$, \dots, $u^{(r-1)}_n = u^{(r-1)}$, and $v_n = v$.
	We will say that the cell $E$ given by \eqref{eq:freq:def-of-cell} is ``good'' if all of the following conditions hold, and ``bad'' otherwise:
\begin{enumerate}
\item\label{it:basic:147:A} $\displaystyle \absnormal{ r - \sqrt{L}} \leq \sqrt{ 2\log L}$;
\item\label{it:basic:147:B} $\displaystyle \abs{ m - 2r } \leq \sqrt{20 r \log L}$;
\item\label{it:basic:147:C} $\displaystyle [u^{(1)}u^{(2)}\dots, u^{(r-1)}1^{m}v]_2 < N$.
\end{enumerate} 
Let also $U_{\good}$ and $U_{\bad}$ denote the union of all good and bad cells, respectively. Our plan is to show that $f$ is well-behaved on $U_{\good}$ and that $U_{\bad}$ is negligibly small.

We begin with estimating the cardinality of $U_{\bad}$.
Let $Z$ denote the set of those $n \in [N]$ whose binary expansion $(n)_2^L = 0 \dots 0(n)_2$ (padded with $0$s to length $L$) contains a subword $w$ with 
\begin{align}\label{eq:473:2}
\abs{ \abs{w}_1 - \frac{1}{2}\abs{w} } > \sqrt{ \frac{3}{2} \abs{w} \log L} .
\end{align}
It follows from Hoeffding's inequality that, if we fix the positions where $w$ begins and ends and we choose $n \in [2^L]$ uniformly at random, the probability that \eqref{eq:473:2} holds at most $2 \exp(-3 \log L) = 2 L^{-3}$. Since there are $\binom{L+1}{2}$ ways to pick a non-empty subword of a length-$L$ word, using the union bound we obtain
\[
	\# Z \leq 2^L \cdot \binom{L+1}{2} \cdot 2 L^{-3} \ll \frac{N}{L}.
\]
Consider a bad cell $E$, given by \eqref{eq:freq:def-of-cell}, and assume that $N$ (and hence also $L$) is sufficiently large.
If \ref{it:basic:147:A} is false then $E \subset Z$, since for any $n \in E$, condition \eqref{eq:473:2} holds for $w = (n)_2^L$, which can easily be derived from the fact that $\abs{w} = L$ and $(r^2+r)/2 \leq \abs{w}_1 \leq (r^2+3r)/2$.
Similarly, if  \ref{it:basic:147:A} is true but \ref{it:basic:147:B} is false then $E \subset Z$, since for any $n \in E$, condition \eqref{eq:473:2} holds for $w = u^{(r(n))}_n$. Finally, if \ref{it:basic:147:A} is true but \ref{it:basic:147:C} is false then the word $u^{(1)}u^{(2)}\dots, u^{(r-1)}$ is a prefix of $(N)_2$, and its length is at least $r(r-1)/2 \geq L/3$. Thus, $E$ is contained in an interval of length $2^{2/3 L} \ll N^{2/3} < N/L$. Combining the estimates obtained above, we conclude that 
\begin{align}\label{eq:freq:E_bad-is-small}
\# U_{\bad} \ll N/L  = o_{N \to \infty}(N).
\end{align}

Consider now a ``good'' cell $E$, given by \eqref{eq:freq:def-of-cell}. It follows from condition \ref{it:basic:147:C} that
\begin{align}\label{eq:freq:form-of-E}
	E = \set{ [u^{(1)} u^{(2)} \dots u^{(r-1)} w v ]_2 }{w \in \Sigma_2^m,\ \abs{w}_1 = r}.
\end{align}
Thus, it follows from \eqref{eq:freq:def-of-f} that
\begin{align}\label{eq:473:8}
	\#\set{ n \in E}{ a_n  = 1} = \ip{\theta \binom{m}{r}} = \ip{ \theta \cdot \# E}.
\end{align}
Assuming that $L$ is sufficiently large, we have
\begin{align}\label{eq:473:7}
\binom{m}{r} \geq \bra{ \frac{m}{r} }^r \geq \bra{\frac{3}{2}}^{\sqrt{L}/2} = \omega_{N \to \infty}(1).
\end{align}
(We recall that $\omega_{N \to \infty}(1)$ denotes an expression that tends to $\infty$ with $N$.) Thus, the expression in \eqref{eq:473:8} is $(\theta - o_{N\to \infty}(1)) \# E$.

We are now ready to estimate
\begin{align*}
	\# \set{ n \in [N] }{ a_n = 1} &\geq \# \set{ n \in U_{\good} }{ a_n = 1} 
	 = \sum_{E \text{ good} } (\theta - o_{N\to \infty}(1)) \cdot \# E
	 \\ &= (\theta - o_{N\to \infty}(1)) (N-\# U_{\bad}) = \theta N  - o_{N\to \infty}(N).
\end{align*}
where the sum is taken over all the cells $E$ that are ``good''. Dividing by $N$ and letting $N \to \infty$, we conclude that $\freqinf_{\bb a}(1) \geq \theta$. Using analogous reasoning, we obtain the complementary estimate $\freqsup_{\bb a}(1) \leq \theta$. \end{claimproof}	

Combining Claims \ref{claim:freq:A} and \ref{claim:freq:B} completes the argument.
\end{proof} 
\section{Subword complexity}\label{sec:subword}\color{double}

Subword complexity of a sequence $\bb a$ over a finite alphabet $\Omega$ is the function $p_{\bb a}$ which assigns to each $\ell \in \NN$ the number of length-$\ell$ subwords of $\bb a$:
\begin{equation}\label{eq:subword:def-p_a}
	p_{\bb a}(\ell) := \# \set{w \in \Omega^\ell}{ (\exists \ n \in \NN_0) \ w =  a_{[n,n+\ell)} }.
\end{equation}
A notable feature of automatic sequences is that their subword complexity is linear: $p_{\bb a}(\ell) = O(\ell)$. This property does not directly extend to asymptotically automatic sequences. In fact, given a finite alphabet $\Omega$ it is not hard to construct a sequence over $\Omega$ that is asymptotically constant (and thus asymptotically $k$-automatic for each $k \geq 2$) and which contains each finite word as a subword (and thus has maximal possible subword complexity, $p_{\bb a}(\ell) = \# \Omega^{\ell}$). In particular, we see that two asymptotically equal sequences can have diametrically different subword complexities.

The discussion above suggests that subword complexity is not the right notion to consider in the asymptotic regime. Instead, we introduce a new notion of \emph{asymptotic subword complexity}, defined as the function $\tilde p_{\bb a}$ which assigns to each $\ell \in \NN$ the number of length-$\ell$ subwords of $\bb a$ which appear with positive asymptotic frequency:
\begin{align}\label{eq:subword:def-tilde-p_a}
	\tilde p_{\bb a}(\ell) &:= \# \set{w \in \Omega^\ell}{ \freqsup_{\bb a}(w) > 0}.
\end{align}

It follows directly from the definition that for any sequence $\bb a$ and any $\ell \in \NN$ we have $\tilde p_{\bb a}(\ell) \leq p_{\bb a}(\ell)$. Moreover, $p_{\bb a}(\ell)$ only depends on the equivalence class of $\bb a$ up to asymptotic equality, that is, if $\bb a \simeq \bb b$ then $\tilde p_{\bb a}(\ell) = \tilde p_{\bb b}(\ell)$ for all $\ell \in \NN$.

\begin{proposition}\label{prop:subword:auto}
	Let $\bb a$ be an asymptotically automatic sequence. Then $\tilde p_{\bb a}(\ell) = O(\ell)$.
\end{proposition}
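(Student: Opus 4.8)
The plan is to reduce the statement to the analogous fact for automatic sequences via Proposition~\ref{prop:basic:structure}. Let $\bb a$ be asymptotically $k$-automatic, and apply Proposition~\ref{prop:basic:structure} to obtain $d \in \NN$, an automatic map $\phi \colon \Sigma_k^* \to \Sigma_d$, and sequences $\bb a^{(0)}, \dots, \bb a^{(d-1)}$ such that for every $u \in \Sigma_k^*$,
\[
	\brabig{a_{k^{\abs{u}}n + [u]_k}}_{n=0}^\infty \simeq \brabig{a^{(\phi(n))}_n}_{n=0}^\infty .
\]
The key point is that any subword $w$ of $\bb a$ of length $\ell$ which occurs with positive upper frequency must occur with positive frequency ``inside'' one of the blocks indexed by a long prefix $u$ of $(n)_k$; more precisely, for a large scale $i$, the sequence $\bb a$ restricted to the block $[[u]_k k^{i}, ([u]_k+1)k^{i})$ agrees (up to a density-zero set of positions, once $i$ is large) with a prefix of $\brabig{a^{(\phi(n))}_n}_{n=0}^\infty$ determined only by $\phi(u) \in \Sigma_d$. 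So the ``asymptotic behaviour'' of $\bb a$ near scale $i$ is governed by the finitely many sequences $\brabig{a^{(\phi(v))}_n}$ as $v$ ranges over $\Sigma_k^i$ — and since $\phi$ is automatic, $\phi$ takes only finitely many values on prefixes, i.e.\ the kernel $\cN_k(\phi)$ is finite, so effectively we are looking at the subwords of finitely many ``interleaved'' automatic-type sequences.

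More concretely, I would argue as follows. First, reduce to showing that the set of length-$\ell$ subwords with positive frequency injects into a union, over the finitely many states $q$ of the minimal automaton computing $\phi$, of the length-$\ell$ subwords of a single ordinary sequence $\bb c^{(q)}$ built by reading off $a^{(\phi(\cdot))}$ along the subtree rooted at $q$. The sequences $\bb c^{(q)}$ are themselves obtained from $\bb a^{(0)},\dots,\bb a^{(d-1)}$ by an automatic ``interleaving'' prescription, hence — this is the crucial claim — each $\bb c^{(q)}$ is again asymptotically $k$-automatic, in fact one can take it genuinely $k$-automatic after modifying on a density-zero set. Then apply the classical linear bound $p_{\bb c}(\ell) = O(\ell)$ for (asymptotically) automatic sequences: we get $\tilde p_{\bb a}(\ell) \leq \sum_q p_{\bb c^{(q)}}(\ell) = O(\ell)$, where the implied constant absorbs the number of states, which depends only on $\bb a$. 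The passage from positive frequency of $w$ in $\bb a$ to occurrence of $w$ in one of the $\bb c^{(q)}$ uses that if $w$ has positive upper frequency then there are $\gg N$ positions $n < N$ (for arbitrarily large $N$) with $a_{[n,n+\ell)} = w$; a pigeonhole over the $O(\log N)$ scales $i$ and the finitely many values $\phi(v)$, $v \in \Sigma_k^i$, shows $w$ occurs at a positive-density set of positions within some block governed by a fixed $a^{(\phi(v))}$, hence $w$ is a subword of the corresponding $\bb c^{(q)}$.

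I expect the main obstacle to be handling the \emph{boundaries} between blocks correctly. A length-$\ell$ window $a_{[n,n+\ell)}$ can straddle the boundary between two consecutive blocks $[[u]_k k^i, ([u]_k+1)k^i)$ and $[([u]_k+1)k^i, ([u]_k+2)k^i)$, and near such a boundary the identification with a shift of $\brabig{a^{(\phi(\cdot))}_n}$ is exact only \emph{asymptotically} (the $\simeq$ in Proposition~\ref{prop:basic:structure} allows a density-zero exceptional set, but that set could be concentrated near short initial segments of blocks). The fix is to choose the scale $i$ comparable to $\log_k n$ so that $\ell = o(k^i)$ and blocks have length $k^i \to \infty$: then a window of length $\ell$ lies entirely within a single block for all but an $O(\ell/k^i) = o(1)$ fraction of positions, so straddling windows are negligible and do not affect positive frequency. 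One must also check that the density-zero exceptional sets coming from the $\simeq$'s, summed appropriately over scales, remain density zero — here the rapid growth of $k^i$ and a dyadic-decomposition argument as in the proof of Lemma~\ref{lem:freq:beta-is-large} take care of it. Modulo these bookkeeping points, the argument is a clean reduction to the classical linear-complexity theorem for automatic sequences.
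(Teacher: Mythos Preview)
Your reduction via Proposition~\ref{prop:basic:structure} starts off in the right direction, and the boundary/pigeonhole remarks are essentially the same considerations that appear in the paper's argument. But the step you flag as ``the crucial claim'' --- that the interleaved sequences $\bb c^{(q)}$ can be taken to be genuinely $k$-automatic after modifying on a density-zero set --- is false in general, and without it your reduction to the classical linear-complexity theorem collapses (using the asymptotic version instead would be circular). The sequences $\bb a^{(0)},\dots,\bb a^{(d-1)}$ supplied by Proposition~\ref{prop:basic:structure} are only asymptotically $k$-automatic, and an asymptotically $k$-automatic sequence need \emph{not} be $\simeq$ to any $k$-automatic sequence: Section~\ref{sec:freq} constructs, for every $\theta\in[0,1]$, an asymptotically $2$-automatic $0/1$-sequence with $\freq(1)=\theta$, whereas a genuine automatic sequence with a well-defined symbol frequency has rational frequency. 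Equivalently, there are uncountably many $\simeq$-classes of asymptotically automatic sequences but only countably many automatic sequences. So there is no hope of replacing the $\bb c^{(q)}$ by honestly automatic sequences in general, and you cannot invoke the classical $p(\ell)=O(\ell)$ bound.

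The paper avoids this entirely by a direct count, with no appeal to the classical theorem. Given $\ell$, fix $i$ with $k^{i-1}\le \ell<k^i$ and pass to the single sequence $\bb a'$ defined by $a'_n=a^{(\phi((n)_k^i))}_{\lfloor n/k^i\rfloor}$, which satisfies $\bb a'\simeq\bb a$ and hence $\tilde p_{\bb a'}(\ell)=\tilde p_{\bb a}(\ell)$. If a word $w$ of length $\ell$ has positive upper frequency in $\bb a'$, pigeonhole gives a residue $0\le r<k^i$ such that $a'_{[k^in+r,\,k^in+r+\ell)}=w$ for a positive-density set of $n$. But since $\ell<k^i$, this window is entirely determined by $r$ together with the two $d$-tuples $\brabig{a^{(j)}_n}_{j<d}$ and $\brabig{a^{(j)}_{n+1}}_{j<d}$; hence there are at most $k^i\cdot(\#\Omega)^{2d}\le k(\#\Omega)^{2d}\,\ell$ possibilities for $w$. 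This is the missing idea: rather than trying to make the auxiliary sequences automatic, one simply observes that a length-$\ell$ window of $\bb a'$ reads only two consecutive entries from each of the $d$ sequences $\bb a^{(j)}$, so the count is finite and linear in $\ell$ regardless of how complicated those sequences are.
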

\begin{proof}
	Assume that $\bb a$ is asymptotically $k$-automatic and takes values in a finite alphabet $\Omega$. Pick $d$, $\bb a^{(0)}, \bb a^{(1)},\dots, \bb a^{(d-1)}$ and $\phi \colon \Sigma_k^* \to \Sigma_{d}$ as in Proposition \ref{prop:basic:structure}.
	
		Let $\ell \in \NN$ and pick $i \in \NN$ with $k^{i-1} \leq \ell < k^i$. Consider the sequence $\bb a'$ given by
	\[
		a'_n = a_{\ip{n/k^i}}^{(\phi((n)_k^i))}.
	\]
	Since $\bb a \simeq \bb a'$, we have $\tilde p_{\bb a'}(\ell) = \tilde p_{\bb a}(\ell)$. Thus, it suffices to show that there are $O(\ell)$ length-$\ell$ words which appear in $\bb a'$ with positive upper frequency. Pick one such word $w \in \Omega^\ell$. Applying the pigeonhole principle, we can find $0 \leq r < k^i$ such that
	\[
		\overline{d}\bra{\set{n \in \NN_0}{ a'_{[k^i n + r, k^i n + r + \ell)} = w} } > 0.
	\]
	Recall that for $h \in [0,2k^i)$ we have 
	\[ a'_{k^i n + h} = a_{n+\ip{h/k^i}}^{(\phi((h)_k^i))},\]
	where $\ip{h/k^i} \in \{0,1\}$. Thus, the word $a'_{[k^i n + r, k^i n + r + \ell)}$ is completely determined by $\brabig{ a^{(j)}_n }_{j=0}^{d-1}$, $\brabig{ a^{(j)}_{n+1} }_{j=0}^{d-1}$ and $r$. It follows that
	\[
		\tilde p_{\bb a'}(\ell) \leq \#\Omega^{d} \cdot \#\Omega^{d} \cdot k^i \leq C \ell,
	\]
	where $C := k \cdot \#\Omega^{2d}$. In particular, $\tilde p_{\bb a'}(\ell) = O(\ell)$, as needed.
\end{proof}

Although the notion of asymptotic subword complexity is quite natural, it does not seem to have been systematically studied in the literature. To justify interest in this notion, we prove several basic properties, which are analogues of standard facts about subword complexity. Given a sequence $\bb a \in \Omega^\infty$, it will be convenient to consider the set given by
\begin{align}\label{eq:subword:def-X_a}
	\tilde X_{\bb a} := \set{\bb x \in \Omega^\infty}{ \freqsup_{\bb a}(w) > 0 \text{ for each subword $w$ of $\bb x$} }.
\end{align}
It is routine to verify that for a word $w \in \Omega^*$ we have $\freqsup_{\bb a}(w) > 0$ if and only $w$ is a subword of some $\bb x \in \tilde X_{\bb a}$. In particular, $\tilde p_{\bb a}(\ell) \geq p_{\bb x}(\ell)$ for all $\bb x \in \tilde X_{\bb a}$ and $\ell \in \NN$. 

\begin{lemma}\label{lem:subword:decompose}
	Let $\bb a$ be a sequence over a finite alphabet $\Omega$ such that $\freqsup_{\bb a}(\omega) > 0$ for all $\omega \in \Omega$. Then exactly one of the following holds:
	\begin{enumerate}
	\myitem{$(\dagger)$}\label{it:subword:non-greedy} there exist $w \in \Omega^*$ and $\bb x \in \tilde X_{\bb a}$ such that  $\bb a = w \bb x$;
	\myitem{$(\ddagger)$}\label{it:subword:greedy} there exist $w_i \in \Omega^*$ such that $\freqsup_{\bb a}(w_i) > 0 = \freqsup_{\bb a}(w_i (w_{i+1})_1)$ for all $i \in \NN_0$ and $\bb a = w_0 w_1 w_2  \dots$.
	\end{enumerate}
\end{lemma}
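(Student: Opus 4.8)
The plan is to run a greedy algorithm that reads $\bb a$ from left to right, cutting it into blocks $w_0, w_1, w_2, \dots$, each of which is a "maximal frequent prefix" of what remains. Concretely, suppose we have already decomposed $\bb a = w_0 w_1 \cdots w_{i-1} \bb b$ for some suffix $\bb b$ of $\bb a$; we then let $w_i$ be the longest prefix of $\bb b$ with $\freqsup_{\bb a}(w_i) > 0$, provided such a longest prefix exists and is finite. Since every symbol has positive upper frequency by hypothesis, the prefix of length $1$ always qualifies, so $w_i$ is non-empty. Two things can happen. Either the process never terminates and never gets stuck, producing an infinite sequence of finite non-empty blocks $w_0, w_1, \dots$ with $\freqsup_{\bb a}(w_i) > 0$ and, by maximality, $\freqsup_{\bb a}(w_i \,(w_{i+1})_1) = 0$ (the extension of $w_i$ by the first letter of the next block is not frequent); this is exactly case \ref{it:subword:greedy}. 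Or at some stage $i_0$ the process fails to produce a finite maximal block, which happens precisely when every finite prefix of the current suffix $\bb b$ is frequent.

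In the latter case I claim $\bb b \in \tilde X_{\bb a}$, which gives case \ref{it:subword:non-greedy} with $w = w_0 \cdots w_{i_0-1}$. Indeed, if every finite prefix of $\bb b$ has positive upper frequency in $\bb a$, then in particular every subword of $\bb b$ does: any subword $w$ of $\bb b$ occurs inside some prefix $\bb b_{[0,m)}$, and $\freqsup_{\bb a}\big(\bb b_{[0,m)}\big) > 0$ forces $\freqsup_{\bb a}(w) \geq \freqsup_{\bb a}\big(\bb b_{[0,m)}\big) > 0$ since every occurrence of $\bb b_{[0,m)}$ contains an occurrence of $w$ at a fixed offset. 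Hence $\bb b \in \tilde X_{\bb a}$ by definition of $\tilde X_{\bb a}$, establishing \ref{it:subword:non-greedy}. One subtlety to record: a priori the greedy choice of $w_i$ requires that the set of frequent prefixes of $\bb b$, if not all of $\bb b$, has a maximal element; but if some finite prefix is not frequent, then by monotonicity ($\freqsup_{\bb a}$ can only decrease as we lengthen a word) all longer prefixes are non-frequent too, so the frequent prefixes form an initial segment $\{1, 2, \dots, \ell\}$ of lengths and $w_i$ is well-defined of length $\ell$.

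Finally I must check the two alternatives are mutually exclusive. If \ref{it:subword:non-greedy} holds, say $\bb a = w \bb x$ with $\bb x \in \tilde X_{\bb a}$, then every prefix of $\bb x$ is a subword of $\bb x$, hence frequent; running the greedy process, after it consumes (at most) the prefix $w$ — note the greedy blocks need not align with $w$, so one should instead argue directly — every continuation stays frequent, so the process can never produce a block $w_i$ with $\freqsup_{\bb a}(w_i (w_{i+1})_1) = 0$, contradicting \ref{it:subword:greedy}. More carefully: under \ref{it:subword:greedy} the prefixes $w_0, w_0 w_1, w_0 w_1 w_2, \dots$ of $\bb a$ have arbitrarily large length and each $w_0 \cdots w_{i-1} (w_i)_1$ past the relevant point has frequency zero, so $\bb a$ has infinitely many non-frequent prefixes; whereas under \ref{it:subword:non-greedy}, writing $\bb a = w\bb x$, every prefix of $\bb a$ of length $\geq |w|$ equals $w$ followed by a prefix of $\bb x$, which is frequent since it contains the (frequent) prefix of $\bb x$ at offset $|w|$ — so $\bb a$ has only finitely many non-frequent prefixes. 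These two conclusions are incompatible, so at most one alternative holds. Combined with the dichotomy of the previous paragraph (the greedy process either runs forever or gets stuck), exactly one holds. The only real point requiring care is the monotonicity/offset bookkeeping showing "frequent superword $\Rightarrow$ frequent subword," which is elementary but must be invoked consistently; everything else is a clean case analysis on whether the greedy procedure terminates.
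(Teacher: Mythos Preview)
Your greedy construction is correct and is exactly the paper's argument: take the longest frequent prefix of the current tail (equivalently, the paper takes the shortest non-frequent prefix and drops its last letter), and either this fails at some stage because every prefix of the tail is frequent---whence the tail lies in $\tilde X_{\bb a}$ and \ref{it:subword:non-greedy} holds---or it runs forever and yields \ref{it:subword:greedy}.

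The mutual-exclusivity argument, however, has a genuine error. In your ``more carefully'' paragraph you assert that under \ref{it:subword:non-greedy}, every prefix of $\bb a$ of length at least $|w|$ is frequent, because it \emph{contains} a frequent prefix of $\bb x$. But containment goes the wrong way: a subword of a frequent word is frequent, not the other way around. Concretely, take $\bb a = 01(10)^\infty$. Here $\freqsup_{\bb a}(0)=\freqsup_{\bb a}(1)=1/2>0$, and $\bb a = w\bb x$ with $w=01$ and $\bb x=(10)^\infty\in\tilde X_{\bb a}$, so \ref{it:subword:non-greedy} holds; yet the length-$3$ prefix $011$ occurs only once, so $\freqsup_{\bb a}(011)=0$. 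Thus $\bb a$ can have infinitely many non-frequent prefixes even when \ref{it:subword:non-greedy} holds, and your dichotomy ``infinitely many vs.\ finitely many non-frequent prefixes'' collapses.

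The fix is the argument you sketched first and then abandoned. If $\bb a = w\bb x$ with $\bb x\in\tilde X_{\bb a}$ and also $\bb a = w_0w_1w_2\cdots$ as in \ref{it:subword:greedy}, then for $i$ large enough the block $w_i$ begins at a position $\geq |w|$, so $w_i(w_{i+1})_1$ is a subword of $\bb x$ and hence $\freqsup_{\bb a}\bigl(w_i(w_{i+1})_1\bigr)>0$, contradicting \ref{it:subword:greedy}. This is exactly the paper's exclusivity argument.
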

\begin{proof}
	We first note that if $\bb a$ admits a representation as in \ref{it:subword:non-greedy} then $\freqsup_{\bb a}(u) > 0$ for each subword $u$ of $\bb x$ and consequently $\bb a$ cannot be represented as in \ref{it:subword:greedy}.

	Assume next that $\bb a$ cannot be represented as in \ref{it:subword:non-greedy}. We greedily construct a representation as in \ref{it:subword:greedy}. Let $\a_0$ denote the initial symbol of $\bb a$. Note that $\bb a$ has a prefix $w$ with $\freqsup_{\bb a}(w) = 0$ since otherwise we would have $\bb a \in \tilde X_{\bb a}$. Since $\freqsup_{\bb a}(\omega) > 0$ for $\omega \in \Omega$, we have $\abs{w} \geq 2$. Pick the shortest possible $w$ and write it in the form $w = w_0 \alpha_1$ where $w_0 \in \Omega^*$ and $\alpha_1 \in \Omega$. By definition, $\freqsup_{\bb a}(w_0) > 0$. We may write $\bb a$ in the form $\bb a = w_0 \bb a'$, where $\bb a' \in \Omega^\infty$ begins with $\a_1$. Then $\bb a'$ has a minimal prefix, which we will denote by $w_1 \a_2$, with $\freqsup_{\bb a}(w_1\a_2) = 0$, since otherwise we would have $\bb a \in w_0 \tilde X_{\bb a}$. Thus, we may write $\bb a' = w_1 \bb a''$. 
	Iterating this reasoning, we find $w_0,w_1,w_2,\dots \in \Omega^*$ and $\a_0,\a_1,\a_2,\dots \in \Omega$ such that $\bb a = w_0w_1w_2\dots$ and for each $i \in \NN_0$, $\a_i$ is the initial symbol of $w_i$, $\freqsup_{\bb a}(w_i) > 0$ and 
	$\freqsup_{\bb a}(w_i \a_{i+1}) = 0$. 
\end{proof}

We point out that the assumptions of Lemma \ref{lem:subword:decompose} are easy to satisfy. Indeed, if it is not the case that  $\freqsup_{\bb a}(\omega) > 0$ for all $\omega \in \Omega$ then we can find a smaller alphabet $\Omega' \subset \Omega$ and a sequence $\bb a' \in (\Omega')^\infty$ with $\bb a' \simeq \bb a$ and $\freqsup_{\bb a'}(\omega) > 0$ for all $\omega \in \Omega'$. 

\begin{lemma}\label{lem:subword:w-is-long}
	Let $\bb a$ be a sequence over a finite alphabet $\Omega$ such that $\freqsup_{\bb a}(\omega) > 0$ for all $\omega \in \Omega$, and assume that $\bb a$ admits a representation as in \ref{it:subword:greedy}. Then
\begin{equation}\label{eq:subword:85:1}
	\lim_{N \to \infty} \frac{1}{N} \sum_{n=0}^{N-1} \abs{w_n} = \infty.
\end{equation}	 
\end{lemma}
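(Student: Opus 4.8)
The plan is to argue by contradiction: I assume \eqref{eq:subword:85:1} fails and manufacture a ``junction word'' $w_i(w_{i+1})_1$ that occurs with positive upper frequency, contradicting \ref{it:subword:greedy}. So suppose there is an integer $C \ge 1$ and an infinite set $\cN \subset \NN$ such that $\sum_{n=0}^{N-1}\abs{w_n} \le CN$ for all $N \in \cN$.

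First I would fix notation: put $p_0 := 0$ and $p_{i+1} := p_i + \abs{w_i}$, so that $a_{[p_i,p_{i+1})} = w_i$ and hence $a_{[p_i,p_{i+1}+1)} = w_i(w_{i+1})_1$ for every $i$. Each $\abs{w_i}\ge 1$ (an empty $w_i$ would make $w_i(w_{i+1})_1$ a single symbol, which has positive upper frequency by hypothesis on $\bb a$), so the $p_i$ are strictly increasing. Now fix $N \in \cN$. Since the $N$ block lengths $\abs{w_n}$, $n<N$, sum to at most $CN$, a Markov-type count shows that at least $N/2$ of these indices satisfy $\abs{w_n}\le 2C$; for each such $n$ the junction word $w_n(w_{n+1})_1$ has length at most $2C+1$. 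There are only finitely many — say at most $M := (2C+1)\#\Omega^{2C+1}$ — words over $\Omega$ of length $\le 2C+1$, so by the pigeonhole principle there is a single word $u^{(N)}$, necessarily of the form $u^{(N)} = w_i(w_{i+1})_1$ for some $i$, with $w_n(w_{n+1})_1 = u^{(N)}$ for at least $N/(2M)$ indices $n<N$. Each such index yields an occurrence of $u^{(N)}$ in $\bb a$ starting at position $p_n < p_N \le CN$, and these positions are distinct, so
\[
	\#\set{ m \in [CN] }{ a_{[m,m+\abs{u^{(N)}})} = u^{(N)} } \ \ge\ \frac{N}{2M}.
\]

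To conclude I would apply pigeonhole a second time, now over the infinitely many $N \in \cN$: since every $u^{(N)}$ lies in a fixed finite set of words, some $u^*$ satisfies $u^{(N)} = u^*$ for infinitely many $N \in \cN$. Along this sub-sequence $CN \to \infty$, and the displayed bound says that the density of occurrences of $u^*$ within $[CN]$ is at least $\tfrac{1}{2MC}$; hence $\freqsup_{\bb a}(u^*) \ge \tfrac{1}{2MC} > 0$. But $u^* = w_i(w_{i+1})_1$ for some $i$, so $\freqsup_{\bb a}(u^*) = 0$ by \ref{it:subword:greedy} — a contradiction. Therefore \eqref{eq:subword:85:1} holds (recall that $\liminf = +\infty$ is the same as $\lim = +\infty$).

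The argument is elementary once organised, and I do not anticipate a genuine obstacle. The one point requiring care is the order of the two nested pigeonhole applications (first over the block indices $i<N$ for a fixed $N \in \cN$, then over $N \in \cN$), together with the observation that, on the event $\sum_{n<N}\abs{w_n}\le CN$, all the relevant junction occurrences sit inside the initial interval $[CN]$ — which is exactly what promotes a count of $\gtrsim N$ occurrences into a positive density.
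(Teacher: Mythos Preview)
Your proposal is correct and follows essentially the same approach as the paper: assume the liminf is bounded, apply a Markov-type bound to find $\gtrsim N/2$ short blocks among the first $N$, then pigeonhole twice (over block indices, then over $N$) to produce a single junction word $w_i(w_{i+1})_1$ with positive upper frequency, contradicting \ref{it:subword:greedy}. The only cosmetic difference is that the paper pigeonholes over pairs $(w,\alpha)$ where you pigeonhole directly over the concatenated word $u = w\alpha$, and the paper bounds the density via $\sum_{n<N}\abs{w_n}$ directly where you pass through the explicit bound $CN$; neither affects the substance.
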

\begin{proof}
	For the sake of contradiction, suppose that
\begin{equation}\label{eq:subword:85:2}
	\liminf_{N \to \infty} \frac{1}{N} \sum_{n=0}^{N-1} \abs{w_n} < L
\end{equation}		
for some integer $L$. Let $N$ be a large integer such that 
\begin{equation}\label{eq:subword:85:3}
	\sum_{n=0}^{N-1} \abs{w_n} < NL,
\end{equation}	
and let $\cM(N)$ denote the set
\begin{equation}\label{eq:subword:85:4}
	\cM(N) = \set{ n \in [N] }{ \abs{w_n} \leq 2L}.
\end{equation}	
It follows from Markov's inequality that $\# \cM(N) \geq N/2$. For $w \in \Omega^*$ with $\abs{w} \leq 2L$ and $\alpha \in \Omega$ such that $\freqsup_{\bb a} (w \a) = 0$, 
consider
\[
	\cM(N;w,\alpha) = \set{n \in \cM(N) }{ w_n = w,\ w_{n+1} \text{ begins with } \alpha}.
\]
Note that $[N] = \bigcup_{w,\a} \cM(N;w,\alpha)$, where the union is taken over all pairs $w,\a$ which satisfy the conditions mentioned above. If $\# \Omega = 1$ there is nothing to prove, so we may freely assume that $\# \Omega \geq 2$. The number of possible pairs $w,\a$ is at most $\# \Omega \cdot (\#\Omega^{2L} + \#\Omega^{2L-1} + \dots + 1) < \#\Omega^{2L+2}$, so we can find a pair $w,\alpha$ with $\# \cM(N;w,\alpha) \geq N/2 \#\Omega^{2L+2}$.

Since there are finitely many possible pairs $w,\alpha$, we conclude that for one of them we have $\# \cM(N;w,\alpha) \geq N/2 \#\Omega^{2L+2}$ for infinitely many values of $N$. In particular, we have
\begin{equation}\label{eq:subword:85:5}
	\freqsup_{\bb a} (w \a) \geq \limsup_{N \to \infty} \frac{\# \cM(N; w,\a) }{\sum_{n=0}^{N-1} \abs{w_n} } \geq \frac{1}{2L \#\Omega^{2L+2} } > 0,
\end{equation}
contradicting an earlier assumption.
\end{proof}

We are now ready to classify sequences with particularly low asymptotic subword complexity. Recall that each sequence $\bb a$ such that $p_{\bb a}(\ell) \leq \ell$ for at least one $\ell \in \NN$ is ultimately periodic, in which case $p_{\bb a}(\ell)$ is bounded as $\ell \to \infty$ \cite{CovenHedlund-1973}. A similar result is true in the asymptotic regime. Recall that we say that a sequence $\bb a$ is asymptotically invariant under a shift if there exists $m \in \NN$ such that $(a_n)_{n=0}^\infty \simeq (a_{n+m})_{n=0}^\infty$.

\begin{proposition}\label{prop:subword:periodic}
	Let $\bb a$ be a sequence over a finite alphabet $\Omega$. If there exists $\ell \in \NN$ such that $\tilde p_{\bb a}(\ell) \leq \ell$ then $\bb a$ is asymptotically invariant under a shift. Conversely, if $\bb a$ is asymptotically invariant under a shift then $\tilde p_{\bb a}(\ell)$ is bounded as $\ell \to \infty$.
\end{proposition}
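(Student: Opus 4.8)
The plan is to treat the two implications separately. The converse direction is the easy one: if $\bb a \simeq (a_{n+m})_{n=0}^\infty$ for some $m \in \NN$, then iterating gives $\bb a \simeq (a_{n+jm})_{n=0}^\infty$ for all $j$, and combining finitely many such asymptotic equalities one finds a set $B \subset \NN_0$ of density zero outside of which $\bb a$ is genuinely $m$-periodic. A length-$\ell$ word $w$ occurring with positive upper frequency must occur at infinitely many positions $n$, and for $N$ large a positive proportion of these positions lie in $[N] \setminus B$ together with the whole block $[n, n+\ell)$ avoiding $B$ (here one uses that $B$ has density zero and $\ell$ is fixed); such a $w$ is therefore of the form $a_{[n_0, n_0+\ell)}$ for one of the $m$ residues $n_0 \bmod m$ reading into the eventually periodic part, so $\tilde p_{\bb a}(\ell) \leq m + o(1)$, hence bounded. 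I would spell out the density-zero bookkeeping but otherwise this is routine.

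For the forward direction, suppose $\tilde p_{\bb a}(\ell) \leq \ell$ for some $\ell$. First reduce to the case where $\freqsup_{\bb a}(\omega) > 0$ for every $\omega \in \Omega$: by the remark after Lemma \ref{lem:subword:decompose} we may pass to an asymptotically equal sequence over a smaller alphabet, and asymptotic invariance under a shift is preserved under $\simeq$. Now apply Lemma \ref{lem:subword:decompose}. In case \ref{it:subword:greedy}, Lemma \ref{lem:subword:w-is-long} forces $\frac1N\sum_{n<N}|w_n| \to \infty$; I claim this is incompatible with $\tilde p_{\bb a}(\ell) \leq \ell$. The idea is the classical Morse–Hedlund dichotomy applied to $\tilde X_{\bb a}$: since $\tilde p_{\bb a}(\ell) \leq \ell$, by the Coven–Hedlund/Morse–Hedlund argument either $\tilde p_{\bb a}(\ell') $ is eventually constant, or $\tilde p_{\bb a}(\ell'+1) = \tilde p_{\bb a}(\ell')$ for some $\ell' \le \ell$, which by the standard right-extension argument forces every long enough word in $\tilde X_{\bb a}$ — and in particular every $w_i$ of length $\ge \ell$ — to have a \emph{unique} right extension of each length that still lies in $\tilde X_{\bb a}$; but $w_i \alpha_{i+1}$ has $\freqsup = 0$, so $w_i$'s forced extension is never the one that continues $\bb a$, which combined with the blocks $w_i$ being long on average and contiguous produces a contradiction. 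Concretely, for $N$ large a positive proportion of the mass $\sum_{n<N}|w_n|$ sits inside blocks $w_n$ of length $\gg$ (average), and each internal length-$\ell$ window of such a block is a subword of $\bb a$ of positive frequency lying in $\tilde X_{\bb a}$; the bound $\tilde p_{\bb a}(\ell) \le \ell$ then pins $\tilde X_{\bb a}$ down to a single eventually periodic orbit, so $\bb a$ is, off a density-zero set, eventually periodic, contradicting case \ref{it:subword:greedy} (in which, by construction, $\freqsup_{\bb a}(w_i \alpha_{i+1}) = 0$ infinitely often breaks periodicity). Hence case \ref{it:subword:non-greedy} must hold: $\bb a = w \bb x$ with $\bb x \in \tilde X_{\bb a}$. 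Then $\bb x$ is a sequence with subword complexity $p_{\bb x}(\ell) \le \tilde p_{\bb a}(\ell) \le \ell$, so by \cite{CovenHedlund-1973} $\bb x$ is ultimately periodic, say $\bb x = u v^\infty$; therefore $\bb a = w u v^\infty$ is ultimately periodic, hence genuinely invariant under the shift by $|v|$ from some point on, so $\bb a$ is asymptotically invariant under a shift.

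The main obstacle is the forward direction's elimination of case \ref{it:subword:greedy}: one must argue that low asymptotic subword complexity together with the blocks $w_i$ being long on average genuinely forces eventual periodicity of $\bb a$ up to density zero, which requires carefully combining the Morse–Hedlund rigidity for $\tilde X_{\bb a}$ with the frequency bookkeeping — in particular checking that positive upper frequency of internal windows of the $w_i$ is enough to place those windows in $\tilde X_{\bb a}$, and that the "forced unique extension" property propagates far enough along each long block to conflict with $\freqsup_{\bb a}(w_i \alpha_{i+1}) = 0$. The rest (the reduction to full support, and the converse) is bookkeeping with density-zero exceptional sets.
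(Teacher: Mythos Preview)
Your forward direction has a genuine gap: case \ref{it:subword:greedy} cannot be ruled out. Consider $\bb a = 0^{n_1} 1^{n_2} 0^{n_3} \cdots$ with $n_i$ growing fast enough that the block boundaries have density zero. Then $\freqsup_{\bb a}(0),\ \freqsup_{\bb a}(1) > 0$ while $\freqsup_{\bb a}(01) = \freqsup_{\bb a}(10) = 0$, so $\tilde X_{\bb a} = \{0^\infty, 1^\infty\}$ and $\tilde p_{\bb a}(\ell) = 2 \leq \ell$ for all $\ell \geq 2$. This $\bb a$ is not of the form $w\bb x$ with $\bb x \in \tilde X_{\bb a}$, so it falls squarely into case \ref{it:subword:greedy} (with $w_i$ the successive constant blocks); nevertheless it \emph{is} asymptotically invariant under the shift by $1$. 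Your sentence ``$\bb a$ is, off a density-zero set, eventually periodic, contradicting case \ref{it:subword:greedy}'' pinpoints the confusion: being asymptotically periodic is the desired conclusion, not something that contradicts \ref{it:subword:greedy}. Note too that $\tilde X_{\bb a}$ need not be ``a single eventually periodic orbit''.

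The paper does not eliminate case \ref{it:subword:greedy}; it proves shift-invariance directly inside it. From $p_{\bb x}(\ell) \leq \tilde p_{\bb a}(\ell) \leq \ell$ every $\bb x \in \tilde X_{\bb a}$ is eventually periodic with period and preperiod at most some $L = L(\ell,\#\Omega)$. Each block $w_i$, having $\freqsup_{\bb a}(w_i) > 0$, is a subword of such an $\bb x$, hence $(w_i)_n = (w_i)_{n+L!}$ for all but $O_L(1)$ positions $n$ within the block. Since Lemma \ref{lem:subword:w-is-long} gives $\frac{1}{N}\sum_{n<N}\abs{w_n} \to \infty$, the set $\{n : a_n \neq a_{n+L!}\}$ has density zero. (On the converse: asymptotic shift-invariance does not produce an ``eventually periodic part'', and your bound $m$ is too small. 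The correct one-line argument is that if $B = \{n : a_n \neq a_{n+m}\}$ has density zero, then any $w$ with $\freqsup_{\bb a}(w) > 0$ occurs at some $n$ with $[n,n+\ell) \cap B = \emptyset$, forcing $w$ to be $m$-periodic as a word; hence $\tilde p_{\bb a}(\ell) \leq \#\Omega^m$.)
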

\begin{proof}
	We may assume without loss of generality that $\freqsup_{\bb a}(\omega) > 0$ for all $\omega \in \Omega$. For each $\bb x \in \tilde X_{\bb a}$ we have $p_{\bb x}(\ell) \leq \tilde p_{\bb a}(\ell) \leq \ell$, and hence $\bb x = v u^\infty$ is eventually periodic with the pre-period $\abs{v}$ and the period $\abs{u}$ bounded as a function of $\ell$ and $\# \Omega$, say $\abs{v},\abs{u} \leq L$ (the bound on the period can be inferred e.g.\ from the argument given in \cite[Prop.\ 1.1.1]{Fogg-book}). It now follows from Lemma \ref{lem:subword:w-is-long} that $\bb a$ is asymptotically invariant under the shift by $L!$. Conversely, if $\bb a$ is asymptotically invariant under the shift by some $m \in \NN$ then $\tilde p_{\bb a}(\ell) \leq \# \Omega^m$ for all $\ell \in \NN$. 
\end{proof}

It follows from the remark above that for each sequence $\bb a$ that is not eventually periodic we have $p_{\bb a}(\ell) \geq \ell+1$ for all $\ell \in \NN$. Sequences for which equality $p_{\bb a}(\ell) = \ell+1$ holds for all $\ell \in \NN$ are known as Sturmian sequences and have been extensively studied; see e.g.\ \cite[Chpt.\ 5]{Fogg-book} for extensive discussion. There are several alternative characterisations of Sturmian sequences. Specifically, a sequence $\bb a$ is Sturmian if and only if it takes one of the following forms:
\begin{align}\label{eq:subword:def-sturmian}
	a_n & = \ip{\theta (n+1) + \rho} - \ip{\theta n + \rho}, 
	& \text{or} &&
	a_n & = \ceil{\theta (n+1) + \rho} - \ceil{\theta n + \rho}, 
\end{align}
where $\theta \in (0,1) \setminus \QQ$ and $\rho \in [0,1)$. We have the following asymptotic analogue. (Recall that by ``for almost all'' we mean ``for all outside of a set with density zero''.)

\begin{proposition}\label{prop:subword:sturmian}
	Let $\bb a$ be a sequence over $\{0,1\}$ and assume that $\tilde p_{\bb a}(\ell) = \ell+1$ for all $\ell \in \NN$. Then there exists $\theta \in [0,1) \setminus \QQ$ and $\bm \rho \in [0,1)^\infty$ such that $\rho_{n+1} = \rho_{n}$ for almost all $n \in \NN_0$ and $a_n = \ip{(n+1)\theta+\rho_n} - \ip{n \theta + \rho_n}$ for all $n \in \NN_0$.
\end{proposition}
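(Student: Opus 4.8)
The plan is to transfer the classical structure theory of Sturmian sequences through the set $\tilde X_{\bb a}$. The key observation is that for each $\bb x \in \tilde X_{\bb a}$ we have $p_{\bb x}(\ell) \le \tilde p_{\bb a}(\ell) = \ell + 1$; since $\bb a$ is not asymptotically invariant under a shift (by Proposition \ref{prop:subword:periodic}, as $\tilde p_{\bb a}$ is unbounded), no $\bb x \in \tilde X_{\bb a}$ can be eventually periodic, so $p_{\bb x}(\ell) = \ell+1$ for all $\ell$, i.e.\ every $\bb x \in \tilde X_{\bb a}$ is Sturmian. By the classical theory, all Sturmian sequences with the same ``slope'' (frequency of $1$) share the same language of subwords, and two Sturmian sequences of different slopes have a subword of some bounded length that the other lacks; hence all elements of $\tilde X_{\bb a}$ have a common slope $\theta \in [0,1)\setminus\QQ$, and $\tilde X_{\bb a}$ is contained in the (shift-orbit closure of the) standard Sturmian system with that slope.

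Next I would invoke Lemma \ref{lem:subword:decompose}: either $\bb a = w\bb x$ for some finite word $w$ and some $\bb x \in \tilde X_{\bb a}$, or $\bb a$ decomposes greedily as $w_0w_1w_2\cdots$ with $\freqsup_{\bb a}(w_i(w_{i+1})_1) = 0$ for all $i$. In the first case, $\bb a$ agrees with a genuine Sturmian sequence of slope $\theta$ on a cofinite set, so it is (even exactly, up to finitely many terms) of the claimed form with $\rho_n$ eventually constant, hence the conclusion holds trivially. The main work is the second case. Here the point is that consecutive blocks $w_i$ and $w_{i+1}$ cannot be ``glued'' inside the Sturmian language: the word $w_i$ concatenated with the first letter of $w_{i+1}$ has zero frequency, hence is not a subword of any $\bb x \in \tilde X_{\bb a}$. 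The plan is to show that each $w_i$ is itself a subword of the Sturmian language of slope $\theta$ (because $\freqsup_{\bb a}(w_i) > 0$), and that the only way to concatenate two such blocks and leave the language at exactly one junction is to shift the ``intercept'' of the underlying rotation by a bounded amount; more precisely, writing $w_0w_1\cdots w_{i-1}$ as having length $n_i$, the block $w_i$ is forced to be $a'_{[n_i, n_{i+1})}$ where $a'_n = \ip{(n+1)\theta + \rho} - \ip{n\theta + \rho}$ for a value $\rho = \rho^{(i)}$ that can differ from $\rho^{(i-1)}$ only at the single position $n_i$. Since by Lemma \ref{lem:subword:w-is-long} the average length of $w_n$ tends to infinity, the positions $n_i$ have density zero, so setting $\rho_n = \rho^{(i)}$ for $n \in [n_i, n_{i+1})$ gives a sequence $\bm\rho$ with $\rho_{n+1} = \rho_n$ for almost all $n$ and $a_n = \ip{(n+1)\theta + \rho_n} - \ip{n\theta + \rho_n}$ for all $n$.

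The step I expect to be the main obstacle is making precise the claim that the greedy blocks $w_i$ are exactly pieces of the two-sided Sturmian orbit and that the discontinuity at each junction $n_i$ can be absorbed by a single change of intercept. This requires a careful analysis of the combinatorics of Sturmian words: one needs that a finite word $u$ of length $m$ occurring in the Sturmian language of slope $\theta$ determines the interval of intercepts $\rho$ for which $u = a'_{[0,m)}$ (this is a classical fact — the cylinder sets of the Sturmian coding partition the circle into $m+1$ intervals), and that if $u\alpha$ is \emph{not} in the language but $u$ is, then $\alpha$ must be the ``wrong'' continuation forced by the partition, which corresponds to jumping across an endpoint of the intercept interval, i.e.\ to replacing $\rho$ at that one coordinate. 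One also has to handle the bookkeeping at position $0$ (the value $\rho_0$ is chosen so that the prefix matches) and check irrationality of $\theta$: if $\theta$ were rational then the Sturmian language would be finite-complexity but the extremal sequences would be eventually periodic, contradicting unboundedness of $\tilde p_{\bb a}$, so $\theta \in [0,1)\setminus\QQ$ as required. The rest — verifying $\rho_n \in [0,1)$, that the formula with floors reproduces $a_n$ on each block, and that density-zero many junctions suffice — is routine once the Sturmian combinatorics is in place.
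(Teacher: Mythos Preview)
Your plan from the point where $\tilde X_{\bb a}$ is known to contain a Sturmian sequence onwards matches the paper's: use Lemma~\ref{lem:subword:decompose}, realise each block $w_i$ as a factor of the Sturmian subshift of slope $\theta$, assign an intercept on each block, and invoke Lemma~\ref{lem:subword:w-is-long} to see that the intercepts change only on a set of density zero. The difficulty is in reaching that point.

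The step ``no $\bb x \in \tilde X_{\bb a}$ can be eventually periodic'' is not justified by your appeal to Proposition~\ref{prop:subword:periodic}. From $p_{\bb x}(\ell) \le \tilde p_{\bb a}(\ell) = \ell+1$ one concludes only that each $\bb x \in \tilde X_{\bb a}$ is Sturmian \emph{or} eventually periodic; knowing that $\bb a$ is not asymptotically shift-invariant does not by itself exclude the second alternative for individual $\bb x$. The contrapositive of the proof of Proposition~\ref{prop:subword:periodic} yields merely that it is not the case that all $\bb x \in \tilde X_{\bb a}$ are eventually periodic with \emph{uniformly bounded} period and pre-period; a priori, all $\bb x$ could be eventually periodic with periods tending to infinity. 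The paper devotes a separate case to ruling this out: first a combinatorial argument (using that $p_{u^\infty}(\ell-1) = \ell = \tilde p_{\bb a}(\ell-1)$ whenever $u^\infty \in \tilde X_{\bb a}$ has minimal period $\ell$, and then comparing three periodic points of different periods) shows the periods are uniformly bounded by some $L$; then a separate compactness result (Lemma~\ref{lem:subword:compactness}) converts the failure of asymptotic invariance under the shift by $L!$ into an element of $\tilde X_{\bb a}$ that is not eventually periodic with period dividing $L!$, giving the contradiction. This case is the bulk of the paper's proof and is missing from your proposal. Once \emph{some} $\bb x \in \tilde X_{\bb a}$ is known to be Sturmian, the equality $p_{\bb x}(\ell) = \tilde p_{\bb a}(\ell)$ forces every frequent word to lie in the Sturmian language of that slope, and then your argument (and the paper's) goes through.
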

\begin{proof}
	We may assume without loss of generality that $\freqsup_{\bb a}(\omega) > 0$ for $\omega \in \{0,1\}$. If $\bb a$ admits a decomposition $\bb a = w \bb x$ as in \ref{it:subword:non-greedy} then $\tilde p_{\bb a}(\ell) = p_{\bb x}(\ell) = \ell+1$ for all $\ell \in \NN$, and thus $\bb x$ is a Sturmian word. As a consequence, we can find the required representation of $\bb a$ with $\bm \rho$ eventually constant.
	Thus, we may assume that $\bb a$ does not admit a decomposition  \ref{it:subword:non-greedy} and hence it has a decomposition  \ref{it:subword:greedy} for some words $w_0,w_1,w_2,\dots$.

	For all $\bb x \in \tilde X_{\bb a}$ we have $p_{\bb x}(\ell) \leq \tilde p_{\bb a}(\ell) = \ell +1$ for all $\ell \in \NN$. Thus, all $\bb x \in \tilde X_{\bb a}$ are either Sturmian or eventually periodic. Suppose first that $\tilde X_{\bb a}$ contains at least one Sturmian word $\bb x$. Let $\theta \in [0,1) \setminus \QQ$ denote the slope of $\bb x$; thus, $\bb x$ is given by $x_n = \ip{(n+1)\theta+\rho} - \ip{n \theta + \rho}$ ($n \in \NN_0$) for some $\rho \in [0,1)$ (possibly with the exception of one value of $n$). For each $\ell \in \NN$ we have $\tilde p_{\bb a}(\ell) = p_{\bb x}(\ell) = \ell+1$. Hence, each word $w$ with $\freqsup_{\bb a}(w) > 0$ is a subword of $\bb x$.  In particular, for each $i \in \NN_0$, there is some $\sigma_i \in [0,1)$ such that $w_{i,m} = \ip{(m+1)\theta+\sigma_i} - \ip{m \theta + \sigma_i}$ for $0 \leq m < \abs{w_i}$. It follows that we can pick $\bm \rho$ of the form 
	\[
		\bm \rho = 
		\underbrace{\rho_0'\rho_0' \dots \rho_0'}_{\abs{w_0} \text{ times}}
		\underbrace{\rho_1'\rho_1' \dots \rho_1'}_{\abs{w_1} \text{ times}}
		\underbrace{\rho_2'\rho_2' \dots \rho_2'}_{\abs{w_2} \text{ times}}
		\dots
	\]
	for some $\rho_1',\rho_2',\dots \in [0,1)$. As a consequence of Lemma \ref{lem:subword:w-is-long} we have that $\rho_{n+1} = \rho_n$ for almost all $n \in \NN_0$.
	
	Now, suppose that all $\bb x \in \tilde X_{\bb a}$ are eventually periodic, and hence can be written in the form $\bb x = v u^\infty$ for some $v \in \Omega^*$ and $u \in \Omega^*$ with $u \neq \epsilon$ which cannot be written as a power of a shorter word. Our next goal is to show that the lengths of the periods $\abs{u}$ are uniformly bounded. 
	
	For the sake of contradiction, assume that for each $\ell_0$ there is a word $\bb x = v u^\infty \in \tilde X_{\bb a}$ as above with $\ell := \abs{u} \geq \ell_0$. Note that $p_{\bb x}(\ell-1) = \ell$. Indeed, $p_{\bb x}(\ell-1)$ cannot be larger since $p_{\bb x}(\ell-1) \leq \tilde p_{\bb a}(\ell-1) = \ell$, and it cannot be smaller since that would imply that $u^\infty$ has a period strictly less than $\ell$. Thus, each word $w \in \Omega^{\ell-1}$ with $\freqsup_{\bb a}(w) > 0$ is a subword of $u^\infty$. Let us next consider a pair of sequences $v_1 u_1^\infty,\ v_2 u_2^\infty  \in  \tilde X_{\bb a}$ with $\abs{u_1} =: \ell_1 < \ell_2 := \abs{u_2}$. Since $\freqsup_{\bb a}(u_1^{\ell_2/\ell_1}) > 0$, up to periodic shift and up to changing at most one symbol, the words $u_1^{\ell_2/\ell_1}$ and $u_2$ are equal. Consider now a third sequence $v_3 u_3^\infty  \in \tilde X_{\bb a}$ with $\ell_3 := \abs{u_3} > 100 \ell_1 \ell_2$. Applying the earlier argument to the pairs $v_1 u_1^\infty,\ v_3 u_3^\infty$ and $v_2 u_2^\infty,\ v_3 u_3^\infty$ we conclude that there exist periodic shifts $\tilde u_1$ and $\tilde u_2$ of $u_1$ and $u_2$ such that $\tilde u_1^{\ell_3/\ell_1}$ and $\tilde u_2^{\ell_3/\ell_2}$ differ from $u_3$ in at most one position. Bearing in mind that $\ell_1\ell_2$ is a period of both $\tilde u_1^\infty$ and $\tilde u_2^\infty$, we conclude that $\tilde u_1^\infty = \tilde u_2^\infty$, contradicting the assumption that $\ell_1 < \ell_2$.

	Let $L$ be the maximal period of $\bb x \in \tilde X_{\bb a}$ and put $M = L!$. Note that it follows from Proposition \ref{prop:subword:periodic} that $\bb a$ is not asymptotically invariant under the shift by $M$. Hence, there exists $\e > 0$ such that we can find arbitrarily long words $w \in \Omega^*$ with $\freqsup_{\bb a}(w) > 0$ such that there are at least $\e \abs{w}$ positions $0 \leq n < \abs{w} - M$ such that $w_n \neq w_{n+M}$ (this follows readily from Lemmas \ref{lem:subword:decompose} and \ref{lem:subword:w-is-long}). For each word $w \in \Omega^*$ with $\freqsup_{\bb a}(w) > 0$ and $\abs{w} \geq M$, let $\bar w$ denote the word over $\{0,1\}$ with length $\abs{\bar w} = \abs{w} - M$ given by $\bar w_n = 0$ if $w_{n} = w_{n+M}$ and $w_n = 1$ otherwise, and let $\cL$ be the family of all words $\bar w$ that arise this way. With the same $\e > 0$ as above, we see that there exist arbitrarily long words $\bar w \in \cL$ such that $\abs{\bar w}_1 \geq \e \abs{w}$. Thus, it follows from Lemma \ref{lem:subword:compactness} below that there exists $\bar{\bb x} \in \{0,1\}^\infty$ such that each subword of $\bar{\bb x}$ belongs to $\cL$ and $\freqsup_{\bar{\bb x}}(1) > 0$. A compactness argument shows that there exists $\bb x \in \tilde X_{\bb a}$ such that for each $n \in \NN_0$ we have $\bar x_n = 0$ if $\bar x_n = x_{n+M}$ and $x_n = 1$ otherwise. This contradicts the earlier observation that each $\bb x \in \tilde X_{\bb a}$ is eventually periodic with period at most $L$, and finishes the argument.	
\end{proof}

\begin{lemma}\label{lem:subword:compactness}
Let $\cL \subset \{0,1\}^*$ be an infinite set of words closed under taking subwords. Suppose that there exists $\e > 0$ such that for infinitely many words $w \in \cL$ we have $\abs{w}_1 \geq \e \abs{w}$. Then there exists $\bb x \in \{0,1\}^\infty$ such that $\freqsup_{\bb x}(1) > 0$ and each subword of $\bb x$ belongs to $\cL$.
\end{lemma}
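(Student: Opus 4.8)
The plan is to prove this by combining a double-counting argument with a soft compactness (correspondence-principle style) argument. It splits into three steps: (i) extract a clean combinatorial consequence of the hypothesis; (ii) build a shift-invariant measure on the associated subshift that gives positive mass to the letter $1$; (iii) read off the desired $\bb x$ by a one-line averaging argument. For Step (i): since $\{0,1\}^m$ is finite for each $m$, the hypothesis forces the words $w\in\cL$ with $\abs{w}_1\geq\e\abs{w}$ to have unbounded length. Given any $L$, pick such a $w$ with $\abs{w}$ large compared to $L$ and double-count the $1$'s over the length-$L$ subwords of $w$: each position of $w$ lies in at most $L$ of them, and in exactly $L$ once one discards the $O(L)$ positions near the two ends, so the average number of $1$'s among the length-$L$ subwords of $w$ is at least $\tfrac{\e}{2}L$. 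Hence, writing $c:=\e/2$, for every $L\in\NN$ there is a word $v_L\in\cL$ with $\abs{v_L}=L$ and $\abs{v_L}_1\geq cL$; here closure of $\cL$ under subwords is used to keep $v_L$ in $\cL$.

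For Step (ii), let $X:=\set{\bb z\in\{0,1\}^\infty}{\text{every subword of }\bb z\text{ lies in }\cL}$, a closed, shift-invariant subset of the Cantor space $\{0,1\}^\infty$. For each $L$ one forms a probability measure $\mu_L$ on $\{0,1\}^\infty$: the law of the random sequence obtained by picking $j\in[L]$ uniformly and reading off the suffix $(v_L)_j(v_L)_{j+1}\cdots(v_L)_{L-1}$ padded with an infinite tail of zeros. Three observations are needed: (a) $\mu_L$ assigns mass $\abs{v_L}_1/L\geq c$ to the cylinder $\set{\bb z}{z_0=1}$; (b) applying the shift changes $\mu_L$ in total variation by at most $2/L$, as only the $j=L-1$ atom is affected; (c) for any fixed finite window $[i,i+\ell)$, only $O(i+\ell)$ of the $L$ choices of $j$ let that window overlap the artificial zero padding — a vanishing proportion as $L\to\infty$. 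Passing to a weak-$*$ limit $\mu$ along a subsequence $L_k\to\infty$, property (b) makes $\mu$ shift-invariant, property (a) gives $\mu(\set{\bb z}{z_0=1})\geq c$, and property (c) forces every finite word that occurs with positive $\mu$-probability in a fixed position to be a genuine subword of some $v_{L_k}$, hence a member of $\cL$. Therefore $\supp\mu\subseteq X$, so in particular $\mu(X)=1$.

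For Step (iii), suppose towards a contradiction that $\freqsup_{\bb z}(1)=0$ for every $\bb z\in X$. Then $\tfrac1N\#\set{n\in[N]}{z_n=1}\to0$ pointwise on $X$; these functions are bounded by $1$ and $\mu(X)=1$, so by the bounded convergence theorem $\int_X\tfrac1N\#\set{n\in[N]}{z_n=1}\,d\mu(\bb z)\to0$. On the other hand, expanding the sum and using shift-invariance of $\mu$ gives $\int\tfrac1N\#\set{n\in[N]}{z_n=1}\,d\mu=\mu(\set{\bb z}{z_0=1})\geq c$ for every $N$, a contradiction. Hence some $\bb x\in X$ satisfies $\freqsup_{\bb x}(1)>0$, and by definition of $X$ every subword of $\bb x$ lies in $\cL$, which is exactly what is claimed.

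The step I expect to be the main obstacle is the support statement in Step (ii): the padded, truncated copies of $v_L$ used to define $\mu_L$ contain "illegal" subwords near the truncation point and in the zero tail, and one must verify carefully that, because these contaminate only a vanishing fraction of the starting positions, they do not survive in the weak-$*$ limit — otherwise $\mu$ might fail to be carried by $X$ and the whole argument collapses. The conceptual reason the measure $\mu_L$ is introduced at all is that a single long word in $\cL$ may carry all of its $1$'s in "transient" positions that escape to infinity, so any naive greedy construction of $\bb x$ risks deferring the $1$'s forever; averaging over all starting positions simultaneously is what pins the density down in the limit.
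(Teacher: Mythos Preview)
Your proof is correct and takes a genuinely different route from the paper's. The paper gives an explicit combinatorial construction: it introduces the threshold density $\alpha := \inf\{\e' > 0 : \abs{w}_1 < \e'\abs{w}$ for all sufficiently long $w \in \cL\}$, fixes a rapidly increasing sequence of scales $\ell_0 < \ell_1 < \cdots$, and then iteratively decomposes a word of length $\ell_i$ and near-extremal $1$-density into a prefix, a subword of length $\ell_{i-1}$ of near-extremal density, and a suffix $v^{(i-1)}$ with $1$-density at least $\alpha/2$; concatenating the suffixes produces $\bb x = v^{(0)}v^{(1)}v^{(2)}\cdots$ directly, with $\freqsup_{\bb x}(1) \geq \alpha/2$ read off from the lengths. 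Your approach instead passes through an invariant measure: you average over starting positions in a single long word to obtain almost-invariant empirical measures, extract a shift-invariant weak-$*$ limit $\mu$ supported on the subshift $X$, and then use shift-invariance together with bounded convergence to locate the required $\bb x$ in $\supp\mu$. The paper's argument is entirely elementary and self-contained, at the price of somewhat delicate multi-scale bookkeeping (the choice of $m_i$, the error terms $\delta_\ell, \delta'_\ell$, and a final compactness step to pass from finite to infinite chains). Your argument is shorter and more conceptual --- the Krylov--Bogolyubov averaging and the bounded-convergence contradiction are standard moves that port to many analogous situations --- but it imports measure-theoretic machinery that the rest of the paper does not otherwise use. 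Your worry about the padding is well-placed but your handling of it is fine: the relevant cylinder sets are clopen, so their $\mu_{L_k}$-measures converge exactly, and the $(i+\ell)/L$ bound kills every forbidden cylinder in the limit.
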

\begin{proof}
	Let $\a$ denote the infimum of all real numbers $\e > 0$ such that for each sufficiently long $w \in \cL$ we have $\abs{w}_1 < \e \abs{w}$. By assumption, $\a > 0$. Pick a decreasing sequence $\delta_\ell > 0$ such that $\delta_\ell \to 0$ as $\ell \to \infty$ and $\abs{w}_1 < (\a + \delta_\ell) \abs{w}$ for all $w \in \Sigma_k^\ell$. Let $(m_i)_{i=1}^\infty$ be a rapidly increasing sequence of positive integers, to be determined in the course of the argument, and let $(\ell_i)_{i=0}^\infty$ be the sequence given by $\ell_0 = 1$ and $\ell_{i+1} = m_{i+1} \ell_i$.

	Consider, for some $i \in \NN$, a word $w \in \cL$ of length $\ell_{i}$ satisfying $\abs{w}_1 \geq (\a - \delta'_{\ell_i})\ell_{i}$, where $\delta_{\ell_i}' := (4/\a - 1)\delta_{\ell_i}	$. We may decompose $w = w_0 w_1 \dots w_{m_i-1}$ where $\abs{w_r} = \ell_{i-1}$ for all $0 \leq r < m_i$. Expressing $\abs{w}_1$ as $\sum_{r=0}^{m_i-1} \abs{w_r}_1$, we that the proportion of words  $w_r$ ($ 0 \leq r < m_i$) such that $\abs{w_r}_1 < (\a - \delta'_{\ell_{i-1}})\ell_{i-1}$ is at most 
	\begin{align}\label{eq:530:1}
		\frac{\delta_{\ell_{i-1}}+\delta_{\ell_i}'}{\delta_{\ell_{i-1}}' + \delta_{\ell_{i-1}}} = \frac{\a}{4} + \delta_{\ell_i} \cdot \frac{4/\a-1}{\delta_{\ell_{i-1}}' + \delta_{\ell_{i-1}}}.
	\end{align}
	Picking $m_{i}$ sufficiently large as a function of $\ell_{i-1}$, we can ensure that the expression in \eqref{eq:530:1} is less than $\a/2$. 
	As a consequence we can decompose $w$ as
	\(
		w = u w' v,
	\)		
	where $\abs{w'} = \ell_{i-1}$, $\abs{w'}_1 \geq (\alpha-\delta'_{\ell_{i-1}})\ell_{i-1}$ and $\abs{v}_1 \geq (\alpha/2)\ell_i$ (we take $w' = w_r$ for the minimal admissible value of $r$).
	Iterating this construction, we can find words $w^{(j)},u^{(j)},v^{(j)} \in \cL$ for $0 \leq j \leq i$ such that $w^{(i)} = w$ and $w^{(j+1)} = u^{(j)} w^{(j)} v^{(j)}$, $\absnormal{w^{(j)}} = \ell_{j}$, $\absnormal{w^{(j)}}_1 \geq (\alpha-\delta'_{\ell_{j}})\ell_{j}$ and $\abs{v^{(j)}}_1 \geq (\alpha/2)\ell_{j+1}$ for $0 \leq j < i$. Applying a compactness argument, we can find infinite sequences $w^{(j)},u^{(j)},v^{(j)} \in \cL$ which satisfy the properties mentioned above for all $j \in \NN_0$.
	
	Consider the sequence $\bb x = v^{(0)}v^{(1)}v^{(2)}\dots$. It is clear from the construction that all subwords of $\bb x$ belong to $\cL$. Since $\absnormal{v^{(j)}}_1 \geq (\alpha/2)\ell_{j+1}$ and $\sum_{h=0}^{j} \absnormal{v^{(h)}} < \ell_{j+1}$, we have $\freqsup_{\bb x}(1) \geq \a/2 > 0$, as needed.
\end{proof} 

\color{double}
\section{Classification problems}\label{sec:class}

Given a class of sequences $\mathcal{C}$, a natural question in the theory of automatic sequences is: Which members of $\mathcal{C}$ are automatic? Here, we discuss some problems of this type in the asymptotic regime.

\subsection{Bracket sequences}
A generalised polynomial is an expression constructed using ordinary polynomials, the floor function, addition and multiplication. In contrast with ordinary polynomials, their generalised counterparts can be bounded or even finitely-valued but non-constant. Bounded generalised polynomials have been extensively investigated by many authors, including H{\aa}land--Knutson, Bergelson, Leibman and others, see e.g. \cite{Haland-1993}, \cite{Haland-1994}, \cite{HalandKnuth-1995}, \cite{BergelsonLeibman-2007}, \cite{Leibman-2012}, \cite{BergelsonHalandSon-2020}. In \cite{AdamczewskiKonieczny} we undertook a systematic study of letter-to-letter codings of finitely-valued generalised polynomial sequences, which we dub bracket sequences. In a series of papers, \cite{ByszewskiKonieczny-2018-TAMS}, \cite{ByszewskiKonieczny-2020-CJM}, \cite{Konieczny-2021-JLM}, we obtained a complete classification of automatic bracket sequences: they are precisely the eventually periodic sequences. The vast bulk of the difficulty in the aforementioned problem stems from the need to consider asymptotically constant sequences. This issue does not arise when we consider asymptotically automatic sequences, leading to a much shorter argument. 

\begin{proposition} 
	Let $\bb a$ be a bracket word over a finite alphabet. Then the following conditions are equivalent:
\begin{enumerate}
\item\label{it:class:75:A} $\bb a$ is asymptotically $k$-automatic for at least one $k \geq 2$;
\item\label{it:class:75:B} $\bb a$ is asymptotically $k$-automatic for all $k \geq 2$;
\item\label{it:class:75:C} $\bb a$ is asymptotically equal to a periodic sequence.
\end{enumerate}
\end{proposition}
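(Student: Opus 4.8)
The plan is to prove the implications \ref{it:class:75:C} $\Rightarrow$ \ref{it:class:75:B} $\Rightarrow$ \ref{it:class:75:A} $\Rightarrow$ \ref{it:class:75:C}. The first is easy: any periodic sequence has its $k$-kernel contained in the finite set of its shifts, hence is $k$-automatic --- and therefore asymptotically $k$-automatic --- for every $k \geq 2$, and by Lemma \ref{lem:basic:almost-eq} this passes to any sequence $\simeq$-equivalent to a periodic one. The implication \ref{it:class:75:B} $\Rightarrow$ \ref{it:class:75:A} is trivial. All the work is in \ref{it:class:75:A} $\Rightarrow$ \ref{it:class:75:C}, which we now address.

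Suppose $\bb a$ is a bracket word that is asymptotically $k$-automatic for some $k \geq 2$; we must produce a periodic $\bb p$ with $\bb a \simeq \bb p$. By Lemma \ref{lem:basic:almost-eq} we may freely replace $\bb a$ by any $\simeq$-equivalent sequence without losing asymptotic $k$-automaticity, and by the remark preceding Proposition \ref{prop:subword:auto} this does not change $\tilde p_{\bb a}$; moreover Proposition \ref{prop:subword:auto} gives $\tilde p_{\bb a}(\ell) = O(\ell)$. We then invoke the structure theory of bounded generalised polynomials of Bergelson--Leibman, in the form packaged for bracket words in \cite{BergelsonLeibman-2007, AdamczewskiKonieczny}: outside a set of density zero, $\bb a$ coincides with a sequence $n \mapsto F(g(n)\Gamma)$, where $g$ is a polynomial sequence on a nilmanifold $X = G/\Gamma$ with dense orbit $\overline{\{g(n)\Gamma : n \in \NN_0\}} = X$, and $F \colon X \to \Omega$ is a finitely-valued Riemann-measurable function. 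Replacing $\bb a$ by this model, we split into cases according to $(X,g)$.

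If the orbit closure $X$ is finite, $\bb a$ is eventually periodic and we are done. If $(X,g)$ is \emph{not} a degree-one irrational rotation on the circle --- that is, either $\dim X \geq 2$, or $X$ has nilpotency step $\geq 2$, or $X = \RR/\ZZ$ but $g$ has degree $\geq 2$ --- then the number of length-$\ell$ words occurring in $\bb a$ with positive frequency, $\tilde p_{\bb a}(\ell)$, grows superlinearly in $\ell$: a length-$\ell$ window of the coding of such an equidistributed polynomial nil-orbit is governed by a region in a parameter space of dimension $\geq 2$ cut out by $\Theta(\ell)$ hypersurfaces, producing $\omega(\ell)$ cells of positive measure and hence that many words of positive frequency. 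This contradicts $\tilde p_{\bb a}(\ell) = O(\ell)$. The only remaining case is $X = \RR/\ZZ$ with $g$ linear, so that after one more harmless modification $a_n = F(\{n\alpha + \beta\})$ for some irrational $\alpha$, some $\beta$, and a finitely-valued Riemann-measurable $F \colon \RR/\ZZ \to \Omega$. If $F$ is a.e.\ constant then $\bb a$ is asymptotically constant, hence periodic, and we are done; otherwise we contradict asymptotic $k$-automaticity as follows. For each $i \in \NN_0$ the kernel element $(a_{k^i n})_{n=0}^\infty$ equals $(F(\{n\, k^i\alpha + \beta\}))_{n=0}^\infty$. Fix $i < j$; since $k^{j-i} \cdot (k^i\alpha) = k^j\alpha$, the sequence $n \mapsto (n\, k^i\alpha,\, n\, k^j\alpha)$ equidistributes on the one-dimensional subtorus $Y = \{(x,\, k^{j-i} x) : x \in \RR/\ZZ\}$ of $(\RR/\ZZ)^2$, so by Weyl equidistribution and the Jordan-measurability of the set involved, the density of $\{n : a_{k^i n} \neq a_{k^j n}\}$ equals $\mu\{x \in \RR/\ZZ : F(x) \neq F(k^{j-i} x + c)\}$, where $c = \beta(1 - k^{j-i})$. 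This is positive: a function fixed almost everywhere by the affine expanding map $x \mapsto k^{j-i} x + c$ must be a.e.\ constant, since that map is conjugate (via a rotation) to $x \mapsto k^{j-i} x$ and hence ergodic, contrary to our assumption on $F$. Therefore the sequences $(a_{k^i n})_{n=0}^\infty$, $i \in \NN_0$, are pairwise non-$\simeq$-equivalent, so $\tilde\cN_k(\bb a)$ is infinite --- contradicting asymptotic $k$-automaticity and completing the proof.

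The main obstacle is organising the two pieces of imported machinery. First, one must invoke the Bergelson--Leibman / bracket-word structure theory in precisely the form above; here the crucial simplification over the classification of fully automatic bracket words (cf.\ \cite{ByszewskiKonieczny-2018-TAMS}) is exactly that the density-zero exceptional set is now harmless, being absorbed by $\simeq$. Second, one needs the superlinear lower bound on $\tilde p_{\bb a}(\ell)$ for codings of nontrivial equidistributed polynomial nil-orbits; making this quantitative requires some care with the geometry of the cell decomposition, although only the crude conclusion ``$\tilde p_{\bb a}(\ell) \neq O(\ell)$'' is needed. The one-dimensional endgame, where asymptotic automaticity --- rather than complexity --- is genuinely used, is comparatively routine once the equidistribution and ergodicity inputs are in place.
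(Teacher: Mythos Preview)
Your route differs from the paper's. The paper does not invoke asymptotic subword complexity: it observes that any asymptotically $k$-automatic sequence is \emph{asymptotically weakly periodic} --- for every restriction $f'(n) = f(an+b)$ there exist $q$ and $r \neq r'$ with $f'(qn+r) = f'(qn+r')$ for almost all $n$, by pigeonhole on $\tilde\cN_k$ --- and then reruns the proof of Theorem~B in \cite{ByszewskiKonieczny-2020-CJM} with equalities relaxed to equalities almost everywhere. That argument also passes through the Bergelson--Leibman model, but it extracts a functional equation for $F$ from weak periodicity rather than counting cells. Your one-dimensional endgame (ergodicity of $x \mapsto k^{j-i}x + c$ on $\RR/\ZZ$) is correct and is close in spirit to the abelian case there.

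The gap is in the complexity step. Your trichotomy is not exhaustive as written: take $X = (\RR/\ZZ)^2$, $g(n) = (n\alpha,n\beta)$ with $1,\alpha,\beta$ independent over $\QQ$, and $F(x,y) = \ip{2x}$. Then $\dim X = 2$ and the orbit is dense, yet $a_n = \ip{2\fp{n\alpha}}$ is a two-interval rotation sequence with linear $\tilde p_{\bb a}$. Nothing in the Bergelson--Leibman representation prevents $F$ from factoring through a proper nil-quotient $X \to X'$; when it does, your ``$\Theta(\ell)$ hypersurfaces in a $\geq 2$-dimensional parameter space'' are parallel and cut only $O(\ell)$ cells. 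To repair this you would need first to pass to a representation in which $F$ does not factor through any proper quotient, and then to prove $\tilde p_{\bb a}(\ell) = \omega(\ell)$ under that nondegeneracy hypothesis. The first step is routine; the second is plausible but is a genuine theorem that your sketch does not supply, already for a quadratic polynomial on $\RR/\ZZ$ or for a $2$-step nilmanifold. The paper's weakly-periodic route sidesteps this issue entirely.
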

\begin{proof} 
	It is clear that we have the chain of implications \ref{it:class:75:C} $\Rightarrow$ \ref{it:class:75:B} $\Rightarrow$ \ref{it:class:75:A}, so we only need to prove \ref{it:class:75:A} $\Rightarrow$ \ref{it:class:75:C}. Since the argument closely follows the lines of the proof of Theorem B in \cite{ByszewskiKonieczny-2020-CJM} we will only provide a sketch of the argument with emphasis on the points where the two arguments differ, and refer to  \cite{ByszewskiKonieczny-2020-CJM} for details.
	
	In \cite{ByszewskiKonieczny-2020-CJM} we use the notion of a weakly periodic function $f \colon \NN_0 \to \Omega$ ($\Omega$ being any set). A function $f$ is weakly periodic if for any restriction $f'$ of $f$ to an arithmetic sequence (given by $f'(n) = f(an+b)$ with $a \in \NN$ and $b \in \NN_0$) there exist $q \in \NN$ and distinct $r,r' \in \NN_0$ such that 
\begin{align}\label{eq:class:747:1}
f'(qn + r) &= f'(qn+r') &\text{for all $n \in \NN_0$}.
\end{align}
Here, we will use an even weaker notion of an asymptotically weakly periodic function, which is defined in the same way with the exception that in \eqref{eq:class:747:1} we only require the equality to hold for almost all $n \in \NN_0$. 

Lemma 2.1 in \cite{ByszewskiKonieczny-2020-CJM} asserts that each automatic sequence is weakly periodic. With essentially the same argument we conclude that each asymptotically automatic sequence is asymptotically weakly periodic. (The only difference is that in the asymptotic variant, we identify sequences that are equal almost everywhere.)

We will show, in analogy with Theorem 2.6 in \cite{ByszewskiKonieczny-2020-CJM}, that if $f \colon \NN_0 \to \RR$ is a finitely-valued generalised polynomial sequence that is asymptotically weakly periodic then $f$  is asymptotically equal to a periodic sequence. Combined with the aforementioned analogue of \cite[Lemma 2.1]{ByszewskiKonieczny-2020-CJM}, this will complete the proof of the implication \ref{it:class:75:A} $\Rightarrow$ \ref{it:class:75:C}. 

We rely on the structure theorem due to Bergelson and Leibman \cite{BergelsonLeibman-2007} (cf.\ \cite[Theorem 1.13]{ByszewskiKonieczny-2020-CJM}) which implies that there exists a minimal nilsystem $(X,T)$, a point $z \in X$, a semialgebraic partition $X = \bigcup_{j=1}^r S_j$ and constants $c_j \in \RR$ such that $g(n) = c_j$ if and only if $T^n(z) \in S_j$ ($1 \leq j \leq r$). (Nilsystem is a special type of a topological dynamical system, i.e., a compact space $X$ equipped with a homeomorphism $T \colon X \to X$. Minimality means that each orbit $\set{T^n(x)}{n \in \NN}$ is dense in $X$. There is a natural choice of a $T$-invariant measure $\mu_X$ on $X$, namely the corresponding Haar measure. For our purposes, all that we need to know about the sets $S_j$ is that $\mu_X(\partial S_j) = 0$, where $\partial S = \operatorname{cl} S \setminus \operatorname{int} S$ denotes the boundary of a set $S$.)
Passing to an arithmetic progression (cf.\ \cite[Remark 1.14]{ByszewskiKonieczny-2020-CJM}) we may assume that $(X,T)$ is totally minimal, meaning that $(X,T^a)$ is minimal for each $a \in \NN$. (We note that this reduction is possible because we are working with nilsystems rather than more general topological dynamical systems.)

Replacing $c_j$ with $0$s and $1$s, we may freely reduce the problem to the simpler situation where $f \colon \NN_0 \to \{0,1\}$ and $f(n) = 1$ if and only if $T^n(z) \in S$ for some semialgebraic set $S$, which is reminiscent of the situation in Lemma 2.4 in \cite{ByszewskiKonieczny-2020-CJM}. Our goal is to show that $f$ is asymptotically equal to a constant sequence. Replacing $S$ with its interior, we may freely assume that $S$ is open. (Corollary 1.12 in \cite{ByszewskiKonieczny-2020-CJM} guarantees that this operation only changes $f$ on a density zero set of positions, and hence does not affect asymptotic weak periodicity nor the property of being asymptotically constant.) If $S$ is empty then there is nothing to prove, so assume that this is not the case.

Since $f$ is asymptotically weakly periodic, we can find $q,r,r' \in \NN_0$ with $q \neq 0$ and $r < r'$ such that $f(qn+r) = f(qn+r')$ for almost all $n$. Let $d = r' - r$. We claim that $T^d(S) \subset S$. Pick $y \in S$ an open neighbourhood $V$ of $T^d(y)$; it will suffice to show that $V \cap S \neq \emptyset$. Let $U \subset S$ be an open neighbourhood of $y$ with $T^d(U) \subset V$, and let $\cN = \set{ n \in \NN_0}{T^{qn+r}(z) \in U}$. Since $(X,T)$ is minimal, the set $\cN$ is syndetic. For almost all $n \in \cN$ (thus, in particular, for at least one $n$) we have $f(qn+r') = f(qn+r) = 1$ and consequently $T^{qn+r'}(z) = T^d(T^{qn+r}(z)) \in U$. Thus, $V \cap S \supseteq T^d(U) \cap S \neq \emptyset$, as needed.

Since $T^d(S) \subset S$, the orbit of any point from $S$ under $T^d$ is contained in $S$. Since $(X,T^d)$ is minimal, it follows that $S$ is dense in $X$. Recalling that $\mu_X(\partial S) = 0$, we conclude that $\mu_X(S) = 1$ and hence $f(n) = 1$ for almost all $n \in \NN_0$ (cf.\ Corollary 1.12 in \cite{ByszewskiKonieczny-2020-CJM}).
\end{proof}

{
As an application, we can consider bracket sequences of particularly simple form and classify the ones that are asymptotically equal to an automatic sequence, cf.\ \cite[Thm.\ 6.2]{AlloucheShallit-2003}.
\begin{corollary}
	Let $\a,\b \in \RR$, $m \in \NN_{\geq 2}$, and let $\bb a$ be the sequence given by $a_n = \ip{ \a n + \b} \bmod m$. If $\bb a$ is asymptotically equal to an automatic sequence then $\a \in \QQ$.
\end{corollary}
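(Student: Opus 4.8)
The plan is to invoke the preceding Proposition to reduce to the case of a periodic sequence, and then to contradict that periodicity via Weyl's equidistribution theorem in case $\a$ is irrational. First, note that $\bb a$ is indeed a bracket word, being (the identity coding of) the finitely-valued generalised polynomial $\ip{\a n + \b} - m \ip{\tfrac1m \ip{\a n+\b}} = \ip{\a n+\b} \bmod m$, so the Proposition is applicable. If $\bb a$ is asymptotically equal to an automatic sequence $\bb b$, then $\bb b$ is $k$-automatic for some $k \geq 2$, hence asymptotically $k$-automatic (a finite kernel stays finite modulo $\simeq$), and therefore so is $\bb a$ by Lemma \ref{lem:basic:almost-eq}. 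By the Proposition, $\bb a \simeq \bb c$ for some periodic sequence $\bb c$; fixing a period $q \in \NN$ of $\bb c$ we get $a_{n+q} = a_n$ for almost all $n \in \NN_0$.

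Next, suppose toward a contradiction that $\a \notin \QQ$, and analyse $a_{n+q} - a_n$ modulo $m$. A routine manipulation of fractional parts yields, for every $n \in \NN_0$,
\[
	\ip{\a(n+q)+\b} - \ip{\a n+\b} = \ip{\a q} + \braif{\, \fp{\a n + \b} \geq 1 - \fp{\a q} \,},
\]
so that $a_{n+q} - a_n \equiv \ip{\a q} + \braif{\fp{\a n+\b} \geq 1 - \fp{\a q}} \pmod{m}$. Since $\a$ is irrational and $q \geq 1$, we have $\a q \notin \ZZ$, hence $\fp{\a q} \in (0,1)$ and both intervals $[0, 1 - \fp{\a q})$ and $[1 - \fp{\a q}, 1)$ have positive length. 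As $(\a n + \b)_{n=0}^\infty$ is equidistributed modulo $1$, both the set of $n$ with $\fp{\a n + \b} < 1 - \fp{\a q}$ and the set of $n$ with $\fp{\a n+\b} \geq 1 - \fp{\a q}$ have positive lower density. Consequently $a_{n+q} - a_n$ is congruent modulo $m$ to $\ip{\a q}$ on a set of positive density, and to $\ip{\a q}+1$ on a set of positive density.

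Finally, since $m \geq 2$, the two consecutive integers $\ip{\a q}$ and $\ip{\a q}+1$ cannot both be divisible by $m$, so on at least one of the two positive-density sets above we have $a_{n+q} \neq a_n$; this contradicts $a_{n+q} = a_n$ for almost all $n$, and forces $\a \in \QQ$. I do not expect a genuine obstacle in carrying this out: the substantive content sits entirely in the Proposition (and ultimately in \cite{ByszewskiKonieczny-2020-CJM}), while the remainder is an elementary floor identity combined with Weyl's theorem. If one insists on naming the ``hard'' step, it is simply the idea of passing to the periodic model and inspecting the increment under the shift by the period $q$; everything after that is bookkeeping.
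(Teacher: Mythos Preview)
Your argument is correct and is precisely the intended derivation: the paper states this corollary without proof, leaving it as an immediate consequence of the preceding Proposition together with the elementary fact that $\ip{\a n+\b}\bmod m$ is not asymptotically periodic when $\a\notin\QQ$. Your floor-identity computation and appeal to Weyl equidistribution carry this out cleanly; there is nothing to add or correct.
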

}

\subsection{Multiplicative sequences}

A complex-valued sequence $\bb a = (a_n)_{n=1}^\infty$ is \emph{multiplicative} if $a_{nm} = a_n a_m$ for all $n,m \in \NN$ with $\gcd(n,m) = 1$, and \emph{completely multiplicative} if the requirement that $\gcd(n,m) = 1$ can be dropped. The problem of classifying automatic multiplicative sequences\footnote{
A slight notational inconvenience stems from the fact that automatic sequences are most naturally defined on $\NN_0$ and multiplicative sequences --- on $\NN$. In order to avoid this issue we adopt the convention where each multiplicative sequence $\bb a = (a_n)_{n=1}^\infty$ indexed by $\NN$ is extended to $\NN_0$ by setting $a_0 = 0$.
}
 has been considered in many papers, including \cite{ Yazdani-2001, SchlagePuchta-2003, BorweinCoons-2009, BorweinChoiCoons-2010, Coons-2010,  BellBruinCoons-2012, BellCoonsHare-2014, Allouche-2018,  KlurmanKurlberg-2019, KlurmanKurlberg-2020, Konieczny-2020, Li-2020}, culminating in a complete resolution in \cite{KoniecznyLemanczykMullner-2022}.
\begin{theorem}\label{thm:KLM}
	Let $\bb a$ be a finitely-valued multiplicative sequence. Then $\bb a$ is automatic if and only if it is $p$-automatic for a prime $p$ and takes the form
	\begin{align*}
		a_{p^in} &= c_i b_{n}
		& \text{for } i \in \NN_0,\ n \in \NN,\ p \nmid n, 
	\end{align*}
	where $\bb b$ and $\bb c$ are eventually periodic sequences.
\end{theorem}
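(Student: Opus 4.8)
The plan is to prove the two implications separately; the passage from the displayed form to $p$-automaticity is routine, and essentially all the content lies in the converse.

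For the forward implication, suppose $a_{p^i n}=c_i b_n$ for $p\nmid n$, with $\bb b$ and $\bb c$ eventually periodic, so that $a_n=c_{\nu_p(n)}\,b_{n/p^{\nu_p(n)}}$ for $n\ge 1$. Since $p$-automatic sequences are closed under pointwise products (cf.\ Corollary~\ref{lem:basic:sgroup} for the asymptotic analogue), it suffices to check that $n\mapsto c_{\nu_p(n)}$ and $n\mapsto b_{n/p^{\nu_p(n)}}$ are $p$-automatic, which I would do by exhibiting a finite automaton reading $(n)_p$ from the least significant digit. Such an automaton detects the trailing block $0^{\nu_p(n)}$, recording $\nu_p(n)$ up to a fixed threshold and thereafter modulo a common eventual period $Q$ of $\bb b$ and $\bb c$ (this yields $c_{\nu_p(n)}$), and simultaneously recording the boundedly many digits just above that block, which determine $n/p^{\nu_p(n)}$ modulo the $p$-part of $Q$, together with $n$ modulo the prime-to-$p$ part $Q'$ of $Q$; since $p$ is invertible modulo $Q'$, this recovers $n/p^{\nu_p(n)}\bmod Q$ and hence $b_{n/p^{\nu_p(n)}}$. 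As eventually periodic sequences are $p$-automatic, $\bb a$ is $p$-automatic.

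For the converse, assume $\bb a$ is finitely-valued, multiplicative and $k$-automatic. Since $a_1=a_1^2\in\{0,1\}$ and $a_1=0$ forces $\bb a\equiv 0$, which has the required form, I may assume $a_1=1$; it is also convenient to split $\bb a$ into its modulus $n\mapsto\abs{a_n}$ and its phase $n\mapsto a_n/\abs{a_n}$ (set to $0$ off the support of $\bb a$), each of which is again a finitely-valued multiplicative sequence and, being a coding of $\bb a$, is $k$-automatic. The first step would be a reduction to a prime base: one shows that $\bb a$ is either eventually periodic --- in which case the conclusion is immediate with $\bb b=\bb a$ and $\bb c$ constant --- or $p$-automatic for a unique prime $p$, which is in effect the content of several of the precursor results and rests on Cobham-type rigidity preventing a multiplicative sequence from being genuinely automatic in a base with two distinct prime factors while avoiding eventual periodicity. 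The remaining, and main, task is to show that the restriction of $\bb a$ to the integers coprime to $p$ is eventually periodic.

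For this, I would play the rigidity of automatic sequences against the pseudorandomness of multiplicative functions. Automaticity guarantees, via \cite[Thm.\ 8.4.9]{AlloucheShallit-book}, the existence of the logarithmic frequencies of all symbols, hence of the logarithmic mean value of every bounded multiplicative sequence obtained from $\bb a$ by a coding; invoking the Hal\'asz--Delange theory of mean values of bounded multiplicative functions, together with the fact that automatic sequences are deterministic and hence orthogonal to aperiodic (non-pretentious) multiplicative functions, one concludes that $\bb a$ agrees, outside a set of density zero, with an eventually periodic multiplicative function on the integers coprime to $p$ (degenerate cases, such as $\bb a$ being supported on a density-zero set, being handled directly). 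This yields $\bb b$ once the density-zero discrepancy is eliminated using the closure of automatic sequences under passage to arithmetic progressions, the finiteness of the kernel, and linear subword complexity --- properties incompatible with a non-removable sparse perturbation of a periodic multiplicative pattern. Finally, $\bb c=(a_{p^i})_{i\ge 0}$ is forced to be eventually periodic, since the sequences $(a_{p^i n})_{n=0}^\infty$ for $i\in\NN_0$ all lie in the finite set $\cN_p(\bb a)$ and therefore recur, which combined with multiplicativity pins down the eventual periodicity of $i\mapsto a_{p^i}$. I expect the principal obstacle to be the two parts of this step that use automaticity quantitatively --- ruling out the non-pretentious alternative for the phase sequence, and converting ``periodic off a null set'' into ``honestly eventually periodic'' --- which is where the bulk of the work in \cite{KoniecznyLemanczykMullner-2022} is concentrated.
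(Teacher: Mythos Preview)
The paper does not prove Theorem~\ref{thm:KLM}; it is quoted from \cite{KoniecznyLemanczykMullner-2022} as background and used as a black box (in Claim~\ref{claim:class:f-period} and in Section~\ref{sec:regular}). There is therefore no ``paper's own proof'' to compare against: your proposal is an attempt to reprove a cited result rather than something established here.

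As for the proposal itself, the easy direction is fine in outline. In the hard direction, however, what you have written is a programme rather than a proof: you correctly identify the two genuinely difficult steps --- reducing to a prime base via Cobham-type rigidity, and upgrading ``periodic off a null set'' to ``eventually periodic'' for the restriction to integers coprime to $p$ --- but you do not carry either of them out, and you yourself note that this is ``where the bulk of the work in \cite{KoniecznyLemanczykMullner-2022} is concentrated''. In particular, the appeal to ``automatic sequences are deterministic and hence orthogonal to aperiodic multiplicative functions'' is doing a lot of unearned work: orthogonality in the Sarnak sense does not by itself force a finitely-valued multiplicative automatic sequence to be pretentious to a Dirichlet character, and the passage from a density-zero discrepancy to exact eventual periodicity requires a concrete structural argument (in \cite{KoniecznyLemanczykMullner-2022} this is where the serious combinatorics of the kernel comes in). So the proposal names the right ingredients but leaves the actual gaps open.
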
 
 
In this section we will show that each asymptotically automatic completely multiplicative sequence behaves in a periodic manner on all prime powers, except possibly for some more complicated behaviour on small primes. It it very likely that one can obtain a complete classification, in the spirit of \cite{KoniecznyLemanczykMullner-2022}, but for the sake of exposition we prove a somewhat weaker result.

\begin{proposition}\label{prop:class:multiplicative}
	Let $\bb a$ be a sequence that is both multiplicative and asymptotically $k$-automatic. Then $\bb a \simeq 0$ or there exists a Dirichlet character $\chi$ such that $a_{p^i} = \chi(p^i)$ for each sufficiently large prime $p$ and each $i \in \NN$.
\end{proposition}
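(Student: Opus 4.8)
The plan is to combine the elementary structure of finitely‑valued multiplicative functions with the rigidity coming from finiteness of the asymptotic $k$‑kernel. We may assume $\bb a\not\simeq 0$. An asymptotically automatic sequence is finitely‑valued, say with value set $V$; from $a_1=a_1^2$, and $a_1=0$ forcing $\bb a\equiv 0$, we may take $a_1=1$, and then $|a_n|$ is bounded. Since $\bb a$ is multiplicative it is determined by the $a_{p^i}$, so the goal is to control these for large $p$. I would run the argument in three stages: produce a candidate Dirichlet character; use the kernel to show $\bb a$ agrees with it off finitely many prime powers; assemble.

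\emph{Stage 1: the character.} First a pigeonhole step: if a value $c$ is attained by $a_{p^i}$ for infinitely many pairs $(p,i)$, then multiplying over distinct such primes shows $c^r\in V$ for all $r$, hence $c\in\{0\}\cup\mu_N$ with $N=\lcm\{1,\dots,\#V\}$; thus there is a finite set $E_0$ of primes with $a_{p^i}\in\{0\}\cup\mu_N$ for all $p\notin E_0$ and all $i$. Next, since $\{n:a_n\ne 0\}$ is contained in $\{n:\text{for every }p\text{ with }a_p=0,\ p\nmid n\text{ or }p^2\mid n\}$, a CRT/sieve bound gives $\bar d(\{a_n\ne0\})\le\prod_{p:a_p=0}\bigl(1-\tfrac{p-1}{p^2}\bigr)$, and $\bb a\not\simeq 0$ therefore forces $\sum_{p:a_p=0}1/p<\infty$. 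Now I invoke the structure theory of bounded multiplicative functions (Hal\'asz's theorem in pretentious form, cf.\ \cite{KoniecznyLemanczykMullner-2022} and its references): $\bb a$ either pretends to no twisted character, or pretends to a unique $\chi(n)n^{it}$; in the latter case finitely‑valuedness forces $t=0$ (else $a_n\overline{\chi(n)}$ would have to track the equidistributing sequence $n^{it}$ on a density‑one set), and since the summand $1-\Re\bigl(a_p\overline{\chi(p)}\bigr)$ is bounded away from $0$ whenever $a_p\ne\chi(p)$, combining with the previous line gives $\sum_{p:\,a_p\ne\chi(p)}1/p<\infty$.

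\emph{Stage 2: the rigidity upgrade} --- the step I expect to be hardest. Put $\bb c:=\bb a\overline\chi$ if $\bb a$ pretends to $\chi$, and $\bb c:=\bb a$ otherwise; in both cases $\bb c$ is multiplicative and, by Corollary \ref{lem:basic:sgroup} ($\overline\chi$ being eventually periodic, hence automatic), asymptotically $k$‑automatic, with $c_p=1$ for all but sparsely many primes. Let $B:=\{p:\ c_{p^i}\ne1\text{ for some }i\ge1\}$. The crux is: \emph{if $\bb c\not\simeq 0$ then $B$ is finite.} Granting this: in the character case $\bb c\not\simeq0$ follows from $\bb a\not\simeq0$ (a multiplicative function pretending to $\chi$ is nonzero at almost all primes prime to the modulus, so has positive density support); thus $B$ is finite, and as $p\notin B$ gives $a_{p^i}=\chi(p^i)$ for all $i$, we are done. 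In the no‑character case, if $\bb a\simeq0$ the conclusion is immediate, while if $\bb a\not\simeq0$ then $B$ is finite, forcing $a_{p^i}=1$ for all large $p$ and all $i$, i.e.\ $\bb a$ pretends to a principal character --- contradiction; so that case does not arise. To prove the crux, suppose $\bb c\not\simeq0$ and $B$ is infinite, and fix $s$ large. By Proposition \ref{prop:basic:structure} there is a fixed $d$ and sequences $\bb a^{(0)},\dots,\bb a^{(d-1)}$ such that, off a density‑zero set of positions, $\bb c$ splits into consecutive length‑$k^s$ blocks, the block indexed by $m$ being a string determined by $(a^{(j)}_m)_{j<d}$; so at most $(\#V)^d$ distinct block‑strings occur. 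Choose $q\in B$ with $q>k^s$ and $q\nmid k$, and let $i\ge1$ be minimal with $\zeta:=c_{q^i}\ne1$. For each $h\in[0,k^s)$ the $m$ with $q^i\mid k^sm+h$ form a positive‑density progression, and (when the cofactor $(k^sm+h)/q^i$ is prime to $q$, the generic situation) $c_{k^sm+h}=\zeta\,c_{(k^sm+h)/q^i}$; so the block of such an $m$ is the block otherwise seen, with its $h$‑th entry multiplied by $\zeta$ (or zeroed if $\zeta=0$). Since $\bb c\not\simeq0$, for a positive proportion of $h$ a positive‑density family of blocks has nonzero $h$‑th entry, and these rescalings then yield more than $(\#V)^d$ distinct block‑strings once $s$ is large --- contradicting the bound, so $B$ is finite.

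\emph{Stage 3.} The finite set $E:=B$ then satisfies $a_{p^i}=\chi(p^i)$ for all $p\notin E$ and all $i\in\NN$ (note $B$ contains the primes dividing the modulus of $\chi$, since there $c_p=0$), which is exactly the assertion for sufficiently large $p$. The delicate point glossed over in Stage 2 is the bookkeeping: one must verify that for a positive proportion of $h$ the $h$‑th block coordinate is not identically zero along the progression of $m$'s in play --- for $q$ large this should follow because the $\bb a^{(j)}$ cannot all be supported on a single residue class modulo $q$ without contradicting $\bb a\not\simeq0$ --- and that distinct twists give genuinely distinct strings; making this precise is the technical core of the proof.
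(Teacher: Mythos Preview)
Your approach is genuinely different from the paper's, and the route via pretentious distance is a reasonable idea, but Stage~2 has a real gap that is not merely bookkeeping.

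The central problem is that your ``crux'' --- for a multiplicative, asymptotically $k$-automatic $\bb c\not\simeq 0$, the set $B=\{p:\exists i,\ c_{p^i}\neq 1\}$ is finite --- is \emph{false} as stated. Take $\bb c=\chi$ for any nontrivial Dirichlet character: it is periodic (hence automatic), not asymptotically zero, yet $B$ contains every prime with $\chi(p)\neq 1$, an infinite set. Your block-counting sketch cannot repair this, because for periodic $\bb c$ the block types are already closed under all the ``rescalings'' you describe. More concretely, the phrase ``the block otherwise seen, with its $h$-th entry multiplied by $\zeta$'' has no clear meaning: the block at $m$ is determined by the state vector $(a^{(j)}_m)_j$ and does not change when you observe that $q^i\mid k^sm+h$; the relation $c_{k^sm+h}=\zeta\,c_{(k^sm+h)/q^i}$ compares an entry of the block at $m$ with an entry of a \emph{different} block, and produces no excess of block types. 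So even under the extra hypothesis that $\{p:c_p\neq 1\}$ is sparse, you have not actually used that hypothesis anywhere in the crux argument, and without it the statement fails. A related slip: in the no-character branch you assert ``$c_p=1$ for all but sparsely many primes'' with $\bb c=\bb a$, but nothing in Stage~1 gives this.

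For comparison, the paper does not try to count blocks. It first reduces to the completely multiplicative case, then uses Proposition~\ref{prop:basic:structure} to produce a genuinely $k$-automatic map $\psi\colon\NN_0\to\Sigma_d$ recording kernel classes, and proves a division property: if $a_q\neq 0$ and $\psi(qm)=\psi(qm')$ then $\psi(m)=\psi(m')$. Combined with an equidistribution lemma for automatic sets, this forces the restriction of $\psi$ to integers coprime to a fixed $M$ to factor through a finite abelian group, so (a character-by-character reduction to) the exact classification of automatic multiplicative sequences applies and $\psi$ is periodic. That periodicity makes a trimmed version of $\bb a$ asymptotically shift-invariant, and the conclusion then comes from the separate theorem that a $1$-bounded finitely-valued multiplicative sequence which is asymptotically shift-invariant is either $\simeq 0$ or genuinely periodic (this last step is where Klurman's pretentious machinery enters). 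If you want to salvage your line of attack, the missing ingredient is precisely something playing the role of that division property --- a way to propagate the constraint $c_{qn}=c_q c_n$ back through the asymptotic kernel --- rather than a block count.
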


Before we approach the proof, we discuss some corollaries and preliminaries. First, since any modification of an automatic sequence on a set with density zero is asymptotically automatic, we have the following consequence, expressed purely in terms of automatic and multiplicative sequences.

\begin{corollary}
	Let $\bb a$ be a multiplicative sequence that is asymptotically equal to an automatic sequence. Then $\bb a \simeq 0$ or there exists a Dirichlet character $\chi$ such that $a_{p^i} = \chi(p^i)$ for each sufficiently large prime $p$ and each $i \in \NN$.
\end{corollary}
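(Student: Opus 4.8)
The statement to prove is Proposition~\ref{prop:class:multiplicative}: if $\bb a$ is multiplicative and asymptotically $k$-automatic, then either $\bb a \simeq 0$ or there is a Dirichlet character $\chi$ with $a_{p^i} = \chi(p^i)$ for all sufficiently large primes $p$ and all $i \in \NN$.

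\medskip

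The plan is as follows. First I would record that an asymptotically $k$-automatic sequence has only finitely many $k$-kernel elements up to $\simeq$, and combine this with multiplicativity. The key mechanism is that for a prime $p$ not dividing $k$, the map $n \mapsto a_{p^i n}$ restricted to $n$ coprime to $p$ is governed by $a_{p^i}$ times the ``$p$-free part'' of $\bb a$, while the dilation $n \mapsto a_n$ along $n \equiv r \pmod{k^j}$ lands in $\tilde\cN_k(\bb a)$. The idea is to show that the values $a_{p^i}$ cannot grow or oscillate too wildly: if infinitely many $a_{p^i}$ (over primes $p$) failed to lie on a single character, one could produce infinitely many pairwise non-$\simeq$ elements of $\tilde\cN_k(\bb a)$, a contradiction. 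Concretely, I would first show $|a_n| \in \{0,1\}$ for all but finitely many prime powers, by noting that if $|a_{p^i}| > 1$ for infinitely many prime powers then the kernel sequences $(a_{p^i n})_n$ have unbounded sup-norm along a positive-density set, contradicting finiteness up to $\simeq$ (since each kernel class, being finite in number, has a bounded range on a density-one set). Similarly $|a_{p^i}|$ cannot be in $(0,1)$ for infinitely many prime powers. The case $a_n = 0$ for a positive-density set of $n$ forces (via multiplicativity: if $a_q = 0$ for a prime $q$ then $a_n = 0$ whenever $q \mid n$) that $\bb a \simeq 0$ unless $a_q \ne 0$ for all but finitely many primes $q$; this dichotomy gives the first alternative.

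\medskip

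So assume $|a_{p^i}| = 1$ for all sufficiently large primes $p$ and all $i$. Now I would use the base-$k$ structure more carefully. Pick $j$ large enough that all residues $r$ with $0 \le r < k^j$ realise a kernel class; there are at most $D := \#\tilde\cN_k(\bb a)$ such classes. Given a large prime $p \nmid k$ and the dilation $n \mapsto a_{k^j n + p}$ versus $n \mapsto a_{k^j n + p'}$ for two large primes $p \ne p'$ with $p \equiv p' \pmod{k^j}$: these two sequences, when restricted to $n$ in a suitable residue class making the argument a multiple of $p$ resp. $p'$ times a fixed unit, compare $a_{p}\cdot(\text{$p$-free tail})$ with $a_{p'}\cdot(\text{$p'$-free tail})$. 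Using that the $p$-free tail and $p'$-free tail of $\bb a$ agree $\simeq$ away from multiples of $pp'$ (density $\to 0$ as $p,p' \to \infty$), one deduces that $a_p/a_{p'}$ must equal the ratio of two kernel classes evaluated appropriately, hence takes only finitely many values as $p$ ranges over a fixed residue class mod $k^j$ — in fact the function $p \mapsto a_p$ on large primes is constant on each residue class mod some modulus $Q$ (a power of $k$, enlarged to kill the finitely many exceptional primes and to absorb $i$). This periodicity of $p \mapsto a_{p^i}$ modulo $Q$, together with $|a_{p^i}| = 1$ and multiplicativity, is exactly the statement that $a_{p^i} = \chi(p^i)$ for a Dirichlet character $\chi$ mod $Q$: one defines $\chi$ on the units mod $Q$ by $\chi(p \bmod Q) := a_p$ (well-defined and completely determined on all residues coprime to $Q$ by Dirichlet's theorem on primes in arithmetic progressions, since every unit class contains a large prime), checks multiplicativity of $\chi$ from multiplicativity of $\bb a$ on coprime prime powers, and checks $a_{p^i} = a_p^i = \chi(p)^i = \chi(p^i)$.

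\medskip

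The main obstacle I expect is the middle step: extracting genuine \emph{periodicity} of $p \mapsto a_p$ (mod a fixed modulus) from the \emph{finiteness up to $\simeq$} of the $k$-kernel, while controlling the density-zero ``bad'' sets that accumulate as the primes $p$ grow. One must argue that the comparison sequences differ on a set of density $o_{p,p' \to \infty}(1)$ but that each element of $\tilde\cN_k(\bb a)$ has a \emph{fixed} representative; reconciling a uniform-in-$p$ conclusion with $p$-dependent negligible sets requires a pigeonhole over the finitely many kernel classes and a careful choice of the arithmetic progressions along which one compares, so that the exceptional set is controlled uniformly. I would handle this by fixing, once and for all, representatives $\bb a^{(0)},\dots,\bb a^{(D-1)}$ of $\tilde\cN_k(\bb a)$, observing that for infinitely many primes $p$ the dilation $(a_{k^j n + p})_n$ is $\simeq$ to the \emph{same} representative $\bb a^{(m)}$ (pigeonhole), and then comparing two such primes directly through $\bb a^{(m)}$ rather than through $\bb a$ itself, which removes the $p$-dependence of the error terms. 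This is essentially the same packaging used in Lemmas~\ref{lem:bases:semigrp} and \ref{lem:basic:arith-prog}.
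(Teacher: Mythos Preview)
The statement at hand is the Corollary, which in the paper follows from Proposition~\ref{prop:class:multiplicative} in one line: if $\bb a \simeq \bb b$ with $\bb b$ $k$-automatic, then $\bb a$ is asymptotically $k$-automatic by Lemma~\ref{lem:basic:almost-eq}, and Proposition~\ref{prop:class:multiplicative} applies. What you have written is an attempt at the Proposition itself.

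As a sketch of the Proposition, there is a genuine gap at precisely the step you flag. Suppose by pigeonhole that $(a_{k^j n + p})_n \simeq \bb a^{(m)} \simeq (a_{k^j n + p'})_n$ for two large primes $p,p'$. To expose $a_p$ via multiplicativity you must restrict to $n \equiv 0 \pmod p$, obtaining (off a density-$O(1/p)$ set) the sequence $a_p \cdot (a_{k^j m + 1})_m$ along $n = pm$; likewise $a_{p'}$ appears along $n \equiv 0 \pmod{p'}$. But these are restrictions of the common representative $\bb a^{(m)}$ to \emph{different} arithmetic progressions (step $p$ versus step $p'$), and the relation $(a_{k^j n+p})_n \simeq (a_{k^j n+p'})_n$ on the full sequences gives no comparison between those two subsequences. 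Working instead at common indices $n = pp'\ell$ yields $a_p\, a_{k^j p'\ell+1} = a_{p'}\, a_{k^j p\ell+1}$ for almost all $\ell$, but this merely reduces the question to comparing two different-step subsequences of $(a_{k^j n+1})_n$ --- the same difficulty one level down. The fixed-representative device does not break this loop, and nothing in your outline supplies the missing ingredient. The paper's proof of Proposition~\ref{prop:class:multiplicative} proceeds quite differently and leans on real machinery: after reducing to the completely multiplicative case, it uses Proposition~\ref{prop:basic:structure} to build a genuinely $k$-automatic map $\psi$, shows through a chain of claims that $\psi$ factors through a finite abelian group on integers coprime to a fixed modulus $M$, applies the full classification Theorem~\ref{thm:KLM} to deduce that $\psi$ is periodic, infers asymptotic shift-invariance of (a tweak of) $\bb a$, and concludes via Theorem~\ref{thm:class:mult-periodic}, which itself rests on Klurman's correlation results for multiplicative functions. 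Neither external input is replaced by the direct kernel-comparison you propose.
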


In particular, we see that some of the classical multiplicative sequences are not asymptotically equal to an automatic sequence.
\begin{corollary}
	There is no automatic sequence $\bb a$ with $\bb a \simeq \bm \mu$, where $\bm \mu$ denotes the M\"obius sequence given by
\[
	\mu_n = 
	\begin{cases}
		(-1)^k &\text{if } n = p_1p_2\dots p_k \text{ is the product of $k$ distinct primes,}\\
		0 &\text{if $n$ is divisible by a square.}
	\end{cases}
\]	
	 Likewise, there is no automatic sequence $\bb a$ with $\bb a \simeq \bm \lambda$, where $\bm \lambda$ denotes the Liouville sequence given by
	 \[
	 	\lambda_n = (-1)^k \text{ if } n = p_1p_2\dots p_k \text{ is the product of $k$ primes.}
	 \] 
\end{corollary}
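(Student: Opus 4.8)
Granting Proposition~\ref{prop:class:multiplicative}, the proof is short. The only reduction needed is the observation that an automatic sequence is (trivially) asymptotically $k$-automatic, so that if some automatic sequence is asymptotically equal to a multiplicative sequence $\bb b$, then Proposition~\ref{prop:class:multiplicative} applies to $\bb b$ itself, and the classification can be exploited through the values of $\bm{\mu}$ and $\bm{\lambda}$ at prime squares and at primes. Note first that neither $\bm{\mu}$ nor $\bm{\lambda}$ is asymptotically zero: $\mu_n \neq 0$ precisely when $n$ is squarefree, and squarefree integers have density $6/\pi^2 > 0$, while $\lambda_n \in \{1,-1\}$ for every $n \geq 1$.

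Suppose, towards a contradiction, that some automatic sequence is asymptotically equal to $\bm{\mu}$. Since $\bm{\mu}$ is multiplicative and $\bm{\mu} \not\simeq 0$, Proposition~\ref{prop:class:multiplicative} furnishes a Dirichlet character $\chi$, of some modulus $q$, with $\mu_{p^i} = \chi(p^i)$ for every sufficiently large prime $p$ and every $i \in \NN$. Taking $i = 2$ gives $\chi(p)^2 = \chi(p^2) = \mu_{p^2} = 0$ for all large $p$; but $\chi(p) = 0$ only when $p \mid q$, so this fails as soon as $p > q$. This contradiction shows that no automatic sequence is asymptotically equal to $\bm{\mu}$.

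The argument for $\bm{\lambda}$ runs along the same lines, with a different final step. If some automatic sequence were asymptotically equal to $\bm{\lambda}$, then, $\bm{\lambda}$ being multiplicative and $\bm{\lambda} \not\simeq 0$, Proposition~\ref{prop:class:multiplicative} would provide a Dirichlet character $\chi$ modulo some $q$ with $\lambda_{p^i} = \chi(p^i)$ for all large primes $p$ and all $i \in \NN$; in particular $\chi(p) = \lambda_p = -1$ for every sufficiently large prime $p$. By Dirichlet's theorem on primes in arithmetic progressions, each residue class $a \bmod q$ with $\gcd(a,q) = 1$ contains arbitrarily large primes, hence $\chi(a) = -1$ for every $a \in (\ZZ/q\ZZ)^\times$ --- contradicting $\chi(1) = 1$. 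So no automatic sequence is asymptotically equal to $\bm{\lambda}$ either. The single subtle point in the whole argument is that Proposition~\ref{prop:class:multiplicative} constrains $a_{p^i}$ only at \emph{large} primes, so the contradiction must be manufactured from large primes alone: for $\bm{\mu}$ this is immediate (take $p$ large and look at $p^2$), while for $\bm{\lambda}$ it needs Dirichlet's theorem to propagate the forced value $\chi(p) = -1$ to all of $(\ZZ/q\ZZ)^\times$.
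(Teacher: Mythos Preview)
Your proof is correct and is precisely the intended argument: the paper states this corollary without proof, as an immediate consequence of the preceding corollary (which in turn is just Proposition~\ref{prop:class:multiplicative} combined with Lemma~\ref{lem:basic:almost-eq}), and your write-up supplies exactly the missing verification that neither $\bm\mu$ nor $\bm\lambda$ can agree with a Dirichlet character on all large prime powers. The only cosmetic remark is that for $\bm\lambda$ one could shortcut slightly by invoking Dirichlet's theorem just for the residue class $1 \bmod q$ (any large prime $p \equiv 1 \pmod q$ gives $\chi(p)=1\neq -1=\lambda_p$), but your version is equally valid.
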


The fact that there exist asymptotically shift-invariant sequences that are not asymptotically equal to a periodic sequence leads to some technical difficulties. In order to overcome them, we will need the following result, which is essentially contained in \cite{Klurman-2017}. We call a sequence $1$-bounded if it takes values in $\set{z \in \CC}{ \abs{z} \leq 1}$. To avoid breaking the flow of discussion, we delegate the proof to the appendix.

\begin{theorem}\label{thm:class:mult-periodic} 
	Let $\bb a$ be a $1$-bounded, finitely-valued multiplicative sequence that is asymptotically invariant under a shift. Then $\bb a \simeq \bf 0$ or $\bb a$ is periodic.
\end{theorem}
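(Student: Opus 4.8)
The plan is to deduce the theorem from the structural description of $1$-bounded multiplicative functions whose $h$-spaced values rarely differ, which follows from the circle of ideas in \cite{Klurman-2017}. Throughout, assume $\bb a \not\simeq \mathbf{0}$; we must show that $\bb a$ is periodic. Fix $m \in \NN$ with $a_n = a_{n+m}$ for almost all $n$, so in particular
\[
	\frac1N \sum_{n < N} \abs{a_{n+m} - a_n} \to 0 \qquad (N \to \infty).
\]

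The main input is the following classification, whose proof (for general $h$) is recorded in the appendix: if $f$ is a $1$-bounded multiplicative function with $\frac1N\sum_{n<N}\abs{f(n+h)-f(n)} \to 0$, then either $\frac1N\sum_{n<N}\abs{f(n)} \to 0$, or there exist $t \in \RR$ and a multiplicative function $\psi$, periodic with period dividing $h$, such that $f(n) = n^{it}\psi(n)$ for every $n$. For $h=1$ this is essentially the content of \cite{Klurman-2017} (with $\psi \equiv 1$); for general $h$ one argues identically, starting from the asymptotic formula for the correlation sums $\sum_{n<N} f(n)\overline{f(n+h)}$, which forces $f$ to pretend to be a Dirichlet character $\chi$ of conductor dividing $h$ and then rigidifies the pretending into the pointwise identity $f(p^j) = \chi(p^j)\,(p^j)^{it}$ at every prime power. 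Applying this with $f = \bb a$ and $h = m$, the first alternative is impossible: since $\bb a$ is finitely valued there is $c>0$ with $\abs{a_n}\ge c$ whenever $a_n\ne 0$, whence $\frac1N\sum_{n<N}\abs{a_n}\ge c\cdot\frac1N\#\{n<N : a_n\ne 0\}$, and the latter does not tend to $0$ because $\bb a\not\simeq\mathbf{0}$. Hence $a_n = n^{it}\psi(n)$ for all $n$, with $\psi$ multiplicative and periodic of some period $q \mid m$.

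It remains to show $t = 0$. As $\psi$ is multiplicative, $\psi(1)=1$, so $\psi$ is not identically zero; pick $n_0$ with $\psi(n_0)\ne 0$, so that $\psi(n_0+jq)=\psi(n_0)\ne 0$ for all $j\in\NN_0$. Along $n = n_0+jq$ we get $a_n = n^{it}\psi(n_0)$, so the (finite) set of values of $\bb a$ contains $\{(n_0+jq)^{it}\psi(n_0) : j\in\NN_0\}$; if $t\ne 0$ this set is infinite (the numbers $e^{it\log(n_0+jq)}$ have arguments increasing without bound with vanishing gaps, hence are dense on the unit circle), a contradiction. Therefore $t=0$ and $a_n=\psi(n)$ for all $n$, so $\bb a$ is periodic of period $q$. (If $q=1$ this reads $a_n=1$ for all $n\ge 1$; with the standing convention $a_0=0$, the sequence $\bb a$ is then eventually periodic, hence $\simeq$ a periodic sequence.)

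The heart of the argument — and the expected main obstacle — is the classification invoked above, in two respects: extending it from the classical shift $h=1$ to a general shift $h=m$, and, in its non-degenerate branch, promoting ``$\bb a$ pretends to be $\chi$'' to the exact identity $a_{p^j}=\chi(p^j)(p^j)^{it}$ at \emph{every} prime power, small primes included (which is what ultimately yields genuine periodicity rather than mere asymptotic periodicity). This requires the full strength of the Halász-type and pretentious-distance machinery underlying \cite{Klurman-2017}, together with a local computation at each prime; by comparison the reduction above and the elimination of $t\ne 0$ via finite-valuedness are routine.
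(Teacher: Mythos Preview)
Your approach is correct and shares its core with the paper's: both reduce to Klurman's structure theory for $1$-bounded multiplicative $f$ with $\frac1N\sum_n|f(n+h)-f(n)|\to 0$. The organisation differs. You invoke the general-$h$ classification $f(n)=n^{it}\psi(n)$ (with $\psi$ periodic of period dividing $h$) as a black box, then kill $t$ by the elementary remark that $\{(n_0+jq)^{it}\}_j$ is infinite when $t\ne 0$. The paper's appendix instead works in stages: it first establishes only pretending, $\D(\bb a,\chi\,n^{it})<\infty$; then proves $t=0$ via a PNT count of primes in short logarithmic windows (necessary because at that stage only pretending is available, not an exact formula); and only afterwards promotes pretending to exact periodicity through a prime-by-prime local analysis. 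Your $t=0$ step is cleaner precisely because you have front-loaded the rigidification into the black-box classification; the paper's ordering forces the heavier PNT input there. The total work is the same --- your ``promoting pretending to the exact identity at every prime power, small primes included'' is exactly the content of the paper's chain of claims.

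One small mismatch worth flagging: the paper's appendix does not prove your classification for arbitrary $1$-bounded $f$; it proves the theorem directly, with finite-valuedness as a standing hypothesis, and in particular uses finite-valuedness to obtain $t=0$ \emph{before} any exact formula is in hand. The general-$h$ classification in the clean form you state is true and provable by adapting Klurman's $h=1$ argument, but it is not literally what the appendix contains, so you would need to supply that proof yourself rather than point to the appendix.
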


We will also need the following basic lemma.

\begin{lemma}\label{lem:class:0-along-AP}
	Let $\bb a$ be a multiplicative sequence. Suppose that there exists $Q,r \in \NN$ with $\gcd(r,Q) = 0$ such that $a_n = 0$ for almost all $n \equiv r \bmod Q$. Then $\bb a \simeq 0$. 
\end{lemma}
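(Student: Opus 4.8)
The printed hypothesis $\gcd(r,Q)=0$ should read $\gcd(r,Q)=1$; under this reading the plan is as follows.

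\textbf{Setup and reduction.} If $\bb a \equiv 0$ there is nothing to prove, so assume $\bb a$ is multiplicative with $a_1 = 1$. The heart of the argument will be the claim that for \emph{every} residue $r'$ with $\gcd(r',Q)=1$ one has $a_n = 0$ for almost all $n \equiv r' \bmod Q$. Granting this, one sums over the $\phi(Q)$ invertible classes to get $a_n = 0$ for almost all $n$ coprime to $Q$; then a general $n$ factors uniquely as $n = dm$ with $d \mid Q^\infty$ (all prime factors of $d$ dividing $Q$) and $\gcd(m,Q)=1$, so $a_n = a_d a_m$ and $a_n \ne 0$ forces $a_m \ne 0$. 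Hence $\#\{n \le N : a_n \ne 0\} \le \sum_{d \mid Q^\infty,\, d \le N} \#\{m \le N/d : \gcd(m,Q)=1,\ a_m \ne 0\}$, and since $\sum_{d \mid Q^\infty} 1/d = \prod_{p \mid Q}(1-1/p)^{-1} < \infty$, this is $o(N)$ by a routine splitting of the sum at a large cutoff, giving $\bb a \simeq 0$.

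\textbf{Transferring the vanishing to the class $r'$.} Fix $r'$ coprime to $Q$ and enumerate the primes $p_1 < p_2 < \dots$ with $p_i \equiv r(r')^{-1} \bmod Q$; by Dirichlet's theorem this list is infinite and $\sum_i 1/p_i = \infty$. For each $i$, the map $n \mapsto p_i n$ embeds $\{n \le X : n \equiv r' \bmod Q,\ p_i \nmid n\}$ into $\{m \le p_i X : m \equiv r \bmod Q\}$, on which $a$ vanishes off a set of size $o(X)$ by hypothesis; since $a_{p_i n} = a_{p_i} a_n$ by multiplicativity, we get $\#\{n \le X : n \equiv r' \bmod Q,\ p_i \nmid n,\ a_{p_i} a_n \ne 0\} = o(X)$.

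\textbf{Case split.} If $a_{p_i} = 0$ for infinitely many $i$, then for each such $i$ multiplicativity gives $a_n = 0$ whenever $p_i \| n$; as $\{n : p_i \| n\}$ has density $1/p_i - 1/p_i^2$ and these events are independent in density for distinct primes, $\prod_i(1 - 1/p_i + 1/p_i^2) = 0$ because $\sum 1/p_i = \infty$, so the set of $n$ with no such $p_i \| n$ has density zero and already $\bb a \simeq 0$. Otherwise $a_{p_i} \ne 0$ for all $i \ge i_0$, and the estimate above becomes $\#\{n \le X : n \equiv r' \bmod Q,\ p_i \nmid n,\ a_n \ne 0\} = o(X)$ for each $i \ge i_0$; consequently $\{n \le X : n \equiv r' \bmod Q,\ a_n \ne 0\}$ is contained in a finite union of $o(X)$-sets together with $\{n \le X : p_i \mid n \text{ for all } i_0 \le i \le k\}$, which has relative density $\le Q^{-1}\prod_{i=i_0}^k 1/p_i$. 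Letting $k \to \infty$ forces $a_n = 0$ for almost all $n \equiv r' \bmod Q$, proving the claim.

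\textbf{Main obstacle.} The arithmetic core — multiplying $n$ by suitable primes to move between residue classes and exploiting divergence of $\sum 1/p$ over an arithmetic progression — is clean; the one place demanding care is the final descent to $n$ not coprime to $Q$, where the $o(\cdot)$ bounds must be applied uniformly over the infinitely many divisors $d \mid Q^\infty$, which is exactly why the convergence of $\sum_{d \mid Q^\infty} 1/d$ is invoked.
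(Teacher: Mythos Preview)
Your proposal is correct and follows essentially the same approach as the paper: a case split on whether enough primes $p$ satisfy $a_p=0$ (you do this per residue class, the paper does it globally via $\sum_{p:\,a_p=0}1/p$), combined with multiplying by primes in a prescribed class modulo $Q$ to transport the almost-everywhere vanishing from the class $r$ to an arbitrary invertible class. The paper's write-up is more compressed---in particular it simply asserts ``and hence also for almost all $n \in \NN_0$'' for the passage from integers coprime to $Q$ to all integers, which you spell out via the convergent sum $\sum_{d\mid Q^\infty}1/d$---and in the nonvanishing case it uses a single large prime $p$ to bound the upper density by $1/p$ rather than intersecting over several $p_i$, but the substance is the same.
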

\begin{proof}
	Let $\cZ$ be the set of all primes $p$ such that $a_p = 0$. There are two cases to consider, depending on the size of $\cZ$.	
	Suppose first that $\sum_{p \in \cZ} 1/p = \infty$. Since $a_{n} = 0$ for each $n$ with $p \mid n$ and $p^2 \nmid n$ for some $p \in \cZ$, for each finite set $\cF \subset \cZ$ we have
\begin{align*}
	\bar{d}\bra{ \set{n}{a_n \neq 0} } \leq
	\prod_{p \in \cF} \bra{ 1- \frac{p-1}{p^2} } 
		\ll \exp\braBig{-\sum_{p \in \cF } \frac{1}{p} }.
\end{align*}	
	Since $\cF \subset \cZ$ was arbitrary, we conclude that $\bb a \simeq \bb 0$, as needed.
	
	Suppose next that $\sum_{p \in \cZ} 1/p < \infty$ and hence (bearing in mind the prime number theorem in arithmetic progressions) in each residue class $s + Q \ZZ$ with $\gcd(s,Q) = 1$ we can find infinitely many primes $p$ with $a_p \neq 0$. Since for each $n \in \NN_0$ with $a_n \neq 0$ that is not divisible by $p$ we have $a_{pn} = a_{p} a_n \neq 0$, it follows that
\begin{align*}
	0 &= {d}\bra{ 
	\set{n \in \NN_0}{
	\begin{aligned}
	& a_n \neq 0,
	\\	& 
	n \equiv r \bmod Q
	\end{aligned}
	}}
\geq
	\frac{1}{p}\bra{ \bar{d}\bra{
		\set{n \in \NN_0}
		{
	\begin{aligned}
	& a_n \neq 0,\\
	& n \equiv s^{-1}r \bmod Q
	\end{aligned}}
	} - \frac{1}{p}}. 
\end{align*}	
	Letting $s$ vary and passing to the limit $p \to \infty$, we conclude that $a_n = 0$ for almost all $n \in \NN_0$ coprime to $Q$, and hence also for almost all $n \in \NN_0$.	
\end{proof}

Before moving on to the proof of Proposition \ref{prop:class:multiplicative}, let us briefly discuss the strategy. We begin with a relatively quick reduction to the case where the sequence $\bb a$ is completely multiplicative. Next, using Proposition \ref{prop:basic:structure}, we find an automatic sequence $\phi$ which encodes much of the behaviour of $\bb a$.  (For technical reasons, we will work with a slightly different automatic sequence $\psi$ defined in the body of the argument, but the difference between the two is not relevant for now.) The benefit of working with $\phi$ as opposed to $\bb a$ is that $\phi$ is genuinely automatic and hence we are in position to apply existing results on automatic sequences to it. Our plan is to exploit multiplicative behaviour of $\phi$. Of course, $\phi$ takes values in some set $\Sigma_d$ ($d \in \NN$) which does not have multiplicative structure\footnote{The fact that $\Sigma_d$ is a subset of the multiplicative semigroup $(\NN_0,\cdot)$ is a red herring: $\Sigma_d$ is only used to index the elements of $\tilde \cN_k(\bb a)$ and any other $d$-element set would do equally well.} 
so in order to talk about multiplicative behaviour of $\phi$ we need to do more work. The idea is, essentially, to define multiplication on $\Sigma_d$ in a way that is compatible with $\phi$. For technical reasons, we are marginally more careful: We pick a multiplicatively rich integer $M$, a set $\cG \subset \Sigma_d$ and define the map $f \colon \NN_0 \to \cG_0 := \cG \cup \{0_{\cG}\}$ by setting $f(n) = \phi(n)$ if $n$ is coprime to $M$ and $f(n) = 0_{\cG}$ otherwise. (Formally, $\cG$ is simply the set of all values of $\phi$ on integers coprime to $M$; the symbol $\cG$ is chosen to emphasise that think of $\cG$ as a group.) We define the operation $\odot$ on $\cG$ by setting $f(n) \odot f(m) := f(nm)$ for all $n,m \in \NN_0$. Verification that this definition is well-posed and makes $\cG$ into a group occupies the bulk of the argument. Once this is accomplished, we can use the classification of multiplicative automatic sequences as a black box to conclude that $f$ is periodic. Letting $L$ denote a period of $f$, it is relatively easy to show that $\bb a$ is asymptotically invariant under the shift by $L$. Combined with the results discussed earlier, this finishes the argument.

\begin{proof}[Proof of Proposition \ref{prop:class:multiplicative}]
{
If $\bb a \simeq \bf 0$ there is nothing to prove, so assume that this is not the case. As a preliminary simplification, we will use an argument similar to one employed in \cite{Konieczny-2020} to reduce to the case where $\bb a$ is completely multiplicative. Let $p$ be a large prime. It follows from the pigeon-hole principle that there exists $i,r,r' \in \NN_0$ with $0 \leq r,r' < k^i/p$ and $\gcd(k,r) = \gcd(k,r') = 1$ such that $r \not \equiv r' \bmod p$ and 
\begin{align}\label{eq:class:858:51}
\bra{ a_{k^i n + r} }_{n=0}^\infty &\simeq \bra{ a_{k^i n + r'} }_{n=0}^\infty 
& \text{and} && \bra{ a_{k^i n + pr} }_{n=0}^\infty &\simeq \bra{ a_{k^i n + pr'} }_{n=0}^\infty. 
\end{align} 
As a consequence, for almost all $n \in \NN_0$ with $k^i n + r' \not \equiv 0 \bmod{p}$ we have
\begin{align}\label{eq:class:858:52}
	a_{k^i pn + pr} = a_{k^i pn + pr'} = a_p a_{k^i n + r'} = a_p a_{k^i n + r}.  
\end{align}
Let $j \in \NN$. By \eqref{eq:class:858:52}, for almost all $n \in \NN_0$ with $k^i n + r \equiv p^j \bmod{p^{j+1}}$ we have
\begin{align}\label{eq:class:858:53}
	a_{p^{j+1}} a_{\bra{k^i n + r}/p^j} = a_p a_{p^{j}} a_{\bra{k^i n + r}/p^j}.  
\end{align}
Using Lemma \ref{lem:class:0-along-AP} we conclude that $a_{p^{j+1}} = a_p a_{p^{j}}$. Thus, applying induction with respect to $j$ we conclude that $a_{p^j} = a_p^j$. Pick a threshold $p_0$ such that the equality $a_{p^j} = a_p^j$ holds for all primes $p \geq p_0$ and all $j \in \NN$, and consider the sequence $\bar{\bb a}$ given by
\begin{align}\label{eq:class:858:54}
	\bar a_n =
	\begin{cases}
		0 & \text{if $n$ is divisible by a prime $p < p_0$},\\
		a_n & \text{otherwise}. 
	\end{cases}
\end{align}
Then $\bar{\bb a}$ is completely multiplicative, asymptotically $k$-automatic, and takes the same value as $\bb a$ on all powers of sufficiently large primes. Hence, replacing $\bb a$ with $\bar{\bb a}$ if necessary, we may assume that $\bb a$ is completely multiplicative.

	Pick $d$, $\bb a^{(0)}, \bb a^{(1)},\dots, \bb a^{(d-1)}$ and $\phi \colon \Sigma_k^* \to \Sigma_{d}$ as in Proposition \ref{prop:basic:structure}.
Let $H$ be an integer that is sufficiently multiplicatively rich that $\phi(0^H u) = \phi(0^{2H} u)$ for all $u \in \Sigma_k^*$, and put $K := k^H$. Define a map $\psi \colon \NN_0 \to \Sigma_d$ by $\psi(m) = \phi(0^h (m)_k)$, where $h$ is the unique integer	satisfying $H \leq h < 2H$ and $h+\abs{(m)_k} \equiv 0 \bmod H$. This construction ensures that for each $m \in \NN_0$ and for each $i \in \NN_0$ that is sufficiently large in terms of $m$ (more precisely: $K^{i-1} > m$) we have
\begin{equation}\label{eq:class:858:1}
	\brabig{ a_{K^i n + m} }_{n=0}^\infty \simeq 
	\brabig{ a_{n}^{(\psi(m))} }_{n=0}^\infty.
\end{equation}
We note that automaticity of $\phi$ implies that $\psi$ is $k$-automatic.
}

\begin{claim}\label{claim:class:div}  
	For each $q \in \NN$ and $m,m' \in \NN_0$, if $a_q \neq 0$ and $\psi(qm) = \psi(qm')$ then $\psi(m) = \psi(m')$.
\end{claim}
\begin{claimproof}
	For each sufficiently large $i$, for almost all $n \in \NN_0$ we have
	\begin{align*}
	a^{(\psi(m))}_n 
	&= a_{K^i n + m} 
	= a_q^{-1} a_{K^i q n + q m} 
	= a_q^{-1} a_{qn}^{(\psi(q m))} 
	\\&= a_q^{-1} a_{qn}^{(\psi(q m'))}  
	= a_q^{-1} a_{K^i q n + q m'}
	= a_{K^i n + m'}
	= a^{(\psi(m'))}_n.   
	\end{align*}
	It follows that $\bb a^{(\psi(m))} \simeq \bb a^{(\psi(m'))}$ and hence $\psi(m) = \psi(m')$. 
\end{claimproof}
	
{ 
	We will need the following standard technical result, which is closely related to \cite[Lem.\ 3.4]{Konieczny-2020} and is fully proved e.g.\ in \cite{KlurmanKonieczny-upcoming}. For the rest of the argument, we let $M$ denote the integer introduced below. We assume that $M$ is divisible by $k$ (otherwise we can freely replace $M$ with $kM$).
}
\begin{claim}\label{claim:class:equidist}  
	There exists a positive integer $M$ such that for each $j \in \Sigma_d$ with
	\[
		\dlog\bra{\set{n \in \NN_0}{\gcd(n,M) = 1,\ \psi(n) = j}} > 0,
	\]		
	 for each $q \in \NN$ comprime to $M$ and each $r \in \ZZ/q\ZZ$ we also have 
	\[
		\dlog\bra{\set{n \in \NN_0}{\gcd(n,M) = 1,\ \psi(n) = j,\ n \equiv r \bmod q}} > 0.
	\]
\end{claim}

\begin{claim}\label{claim:class:psi-1}  
	We have	
	\(
		\dlog\bra{\set{n \in \NN}{ \gcd(n,M) = 1,\ \psi(n) = \psi(1)}} > 0.
	\)	
\end{claim}
\begin{claimproof}  
	Pick any $j$ such that 
	\(
		\dlog\bra{\set{n \in \NN}{ \gcd(n,M) = 1,\ \psi(n) = j}} > 0.
	\)	
	Pick any $q$ belonging to that set, that is, $\psi(q) = j$ and $\gcd(q,M) = 1$. Consider the set 
	\[
		A = \set{n \in \NN}{ \gcd(n,M) = 1, \ \psi(q n) = j}.
	\]
	By Claim \ref{claim:class:equidist} we have $\dlog(A) > 0$, as 
	\[
		\dlog(qA) = 
		\dlog\bra{\set{n \in \NN}{\gcd(n,M) = 1, \ \psi(n) = j,\ n \equiv 0 \bmod q}} > 0.
	\]
	For each $n \in A$ we have $\psi(qn) = \psi(q) = j$. 
	If $a_q \neq 0$ then it follows from Claim \ref{claim:class:div} that $\psi(n) = \psi(1)$. Since $n \in A$ was arbitrary, we conclude that
	\[
	\dlog\bra{\set{n \in \NN}{ \gcd(n,M) = 1,\ \psi(n) = \psi(1)}} \geq \dlog(A) > 0,
	\]
	and the proof is complete. 
	It remains to consider the case where $a_q = 0$ for all $q$ as above. Inspecting the argument, we see that this is only possible if $a_{n} = 0$ for almost all $n \in \NN$ that are coprime to $M$, and hence (since $\bb a$ is multiplicative) $\bb a \simeq \bf 0$, contradicting an earlier assumption.
	\end{claimproof}
	
{ 
We will routinely use the following observation in conjunction with Claim \ref{claim:class:div} to remove the assumption that $a_q \neq 0$.
}
\begin{claim}\label{claim:class:non-zero}  
	For each $q \in \NN$ coprime to $M$ we have $a_q \neq 0$.
\end{claim}
\begin{claimproof}  
	For the sake of contradiction, suppose that $a_q = 0$. By Claims \ref{claim:class:equidist} and \ref{claim:class:psi-1}, there exists $r \in \NN$ such that $\psi(qr) = \psi(1)$. Let $i$ be a sufficiently large integer. Then
	\begin{align*}
	\bb 0 &= \bra{ a_{K^i q n + qr}  }_{n=0}^\infty \simeq \bra{ a_{K^i q n + 1}  }_{n=0}^\infty.
\end{align*}	 
Hence, it follows from Lemma \ref{lem:class:0-along-AP} that $\bb a \simeq 0$, contradicting an earlier assumption.
\end{claimproof}

\begin{claim}\label{claim:class:semigroup}  
	Let $q,r \in \NN$ be coprime to $M$. If $\psi(q) = \psi(r) = \psi(1)$ then $\psi(qr) = \psi(1)$.
\end{claim}
\begin{claimproof}  
	By Claim \ref{claim:class:equidist}, there exists $u \in \NN$ coprime to $M$ and such that 
	\[
		\psi(qru) = \psi(1) = \psi(q).
	\]
	Hence, by Claim \ref{claim:class:div}, we have
	\[
		\psi(ru) = \psi(1) = \psi(r).
	\]
	Using Claim \ref{claim:class:div} again, we obtain
	\[
		\psi(u) = \psi(1) = \psi(qru).
	\]
	Yet another application of Claim \ref{claim:class:div} yields
	\[
		\psi(1) = \psi(qr),
	\]
	as needed.
\end{claimproof}

\begin{claim}\label{claim:class:inverse}  
	Let $q,q',r \in \NN$ be coprime to $M$. If $\psi(q) = \psi(q')$ and $\psi(qr) = \psi(1)$ then $\psi(q'r) = \psi(1)$.
\end{claim}
\begin{claimproof}  
	By Claim \ref{claim:class:equidist}, there exists $u \in \NN$ coprime to $M$ such that
	\[
		\psi(q'r u) = \psi(1) = \psi(qr). 
	\]
	Hence, by repeated applications of Claim \ref{claim:class:div} we obtain
	\begin{align*}
		\psi(q' u) &= \psi(q) = \psi(q') \\
		\psi(u) &= \psi(1) = \psi(q'r u) \\
		\psi(1) &= \psi(q' r),
	\end{align*}
	as needed.
\end{claimproof}

\begin{claim}\label{claim:class:multi}  
	Let $q,q',r,r' \in \NN$ be coprime to $M$. If $\psi(q) = \psi(q')$ and $\psi(r) = \psi(r')$ then $\psi(qr) = \psi(q'r')$.
\end{claim}
\begin{claimproof}  
	By Claim \ref{claim:class:equidist}, there exist $\hat q, \hat r \in \NN$ coprime to $M$ and such that 
	\[ \psi(q \hat q) = \psi(r \hat r) = \psi(1). \]	
	By Claim \ref{claim:class:inverse} we also have
	\[ \psi(q' \hat q) = \psi(r'\hat r) = \psi(1). \]	
	By Claim \ref{claim:class:semigroup}, it follows that
	\[ \psi( q r \hat q \hat r) = \psi(1) = \psi(q' r' \hat q \hat r).\]
	Thus, by Claim \ref{claim:class:div} we conclude that
	\( \psi( q r ) = \psi(q' r' ),\) 
	as needed.
\end{claimproof}

{ 
We are now ready to use $\psi$ to construct a multiplicative sequence that is genuinely automatic (as opposed to merely asymptotically automatic). Define 
\[
	\cG := \set{ \psi(q) }{ q \in \NN,\ \gcd(q,M) = 1},
\]
and $\cG_0 = \cG \cup \{0_{\cG}\}$, where $0_{\cG} \not \in \cG$.  Let $f \colon \NN_0 \to \cG$ be given by 
\[
	f(n) = 
	\begin{cases}
		\psi(n) & \text{if } \gcd(n,M) = 1;\\
		0_{\cG} & \text{if } \gcd(n,M) > 1.
	\end{cases}
\]
We define a binary operation $\odot$ on $\cG$ by 
\[ \psi(q) \odot \psi(r) = \psi(qr)\]
for $q,r \in \NN$ coprime to $M$; this definition is well-posed thanks to Claim \ref{claim:class:multi}. We extend $\odot$ to $\cG_0$ by setting $g \odot 0_{\cG} = 0_{\cG} \odot g = 0_{\cG}$ for all $g \in \cG_0$. It is routine to verify for all $n,m \in \NN_0$ we have
\[
	f(nm) = f(n) \odot f(m).
\]
It follows that $(\cG_0,\odot)$ is a finite abelian semigroup, which is the image of $(\NN_0,\cdot)$ under the semigroup homomorphism $f$. 
Moreover, $\psi(1)$ is the identity element in $\cG$, and by Claim \ref{claim:class:equidist}, each element of $\cG$ has an inverse. Hence, $(\cG,\odot)$ is a group.

}	
	
\begin{claim}\label{claim:class:f-period}  
	The map $f$ is periodic.
\end{claim}
\begin{claimproof}  
	If $(\cG,\odot)$ is isomorphic to the cyclic group $C_n$ for some $n \in \NN$ then $\cG_0$ can be identified with the subset of complex numbers $\set{z \in \CC}{z^n = 1 \text{ or } z = 0}$, under which identification $f$ can be construed as a multiplicative map $f \colon \NN_0 \to \CC$. 	Hence, the conclusion follows from Theorem \ref{thm:KLM}. The general case can readily be derived by considering characters of $(\cG,\odot)$.\end{claimproof}	

{ 
Let $L$ be a period of $f$ that is also a multiple of $M$. Consider the complex-valued multiplicative sequence $\bb a'$, given by
\[
	a'_n := 
	\begin{cases}
		a_n & \text{if } \gcd(n,k) = 1;\\
		0 & \text{if } \gcd(n,k) > 1.
	\end{cases}
\]
}

\begin{claim}\label{claim:class:a-asympt-period}  
	The sequence $\bb a'$ is asymptotically invariant under shift by $L$.
\end{claim}
\begin{claimproof}  
	For each $n$ with $\gcd(k,n) > 1$ we have $a'_n = a'_{n+L} = 0$, so we only need to verify that $a'_n = a'_{n+L}$ for almost all $n$ coprime to $k$.
	Let $P_{i,r}$ be the arithmetic progression given by $P_{i,r} = K^i \NN_0 + r$, where $i,r \in \NN_0$, $r < K^{i-1} - L$, and $\gcd(r,M) = 1$. For almost all $n \in \NN_0$ we have
	\[
		a_{K^i n + r}' = a_{K^i n + r} = a^{(\psi(r))}_{n} = a^{(\psi(r+L))}_{n} = a_{K^i n + r+L} = a_{K^i n + r+L}'. 
	\]
	Hence, for almost all $n \in P_{i,r}$ we have $a_{n+L}' = a_{n}'$. Thus, letting 
	\[ 
		B_j := \set{n \in \NN}{\gcd(n,k) = 1} \setminus \textstyle \bigcup_{i = 1}^{j} \bigcup_{r} P_{i,r},
	\]
	where the union runs over all integers $r$ with $0 \leq r < K^{i-1} - L$ and $\gcd(r,M) = 1$, it will suffice to show that $d(B_j) \to 0$ as $j \to \infty$. It is not hard to construct a word $w \in \Sigma_K^*$ that is not contained in the last $j$ digits of the  base-$K$ expansion of any $n \in B_j$. (For instance, one can pick  $w = (0^{h} 1)^M$ where $h$ is a sufficiently large and multiplicatively rich integer; the key idea is that if $w$ appears in $(n)_K$ and $\gcd(n,k) = 1$ then $n$ belongs to $M$ different arithmetic progression of the form $K^{i} \NN_0 + r$ with $r < K^{i-h} \leq K^{i-1} - L$ and at least one of those progressions satisfies $\gcd(r,M) = 1$.) It follows that
	\[
		d(B_j) \leq K^{-j} \cdot \#\set{n \in [K^j]}{(n)_K \text{ does not contain $w$}} \to 0 \text{ as } j \to \infty,
	\]     
	which together with earlier considerations proves the claim.
\end{claimproof}

{ 
Combining Claim \ref{claim:class:a-asympt-period} with Theorem \ref{thm:class:mult-periodic} finishes the argument.
}
\end{proof}

\color{checked}
\section{Regular sequences}\label{sec:regular}

The fact that automatic sequences are necessarily finitely-valued poses a significant limitation. This motivated Allouche and Shallit to introduce a closely related notion of a \emph{regular sequence} \cite{AlloucheShallit-1992}, \cite{AlloucheShallit-2003}. Let $R$ be an integral domain (e.g.\ $R = \ZZ$ or $R = \FF_p$) and let $K$ be the field of fractions of $R$ (e.g.\ $K = \QQ$ or $K = \FF_p$). A sequence $\bb a \in R^\infty$ is $k$-regular if and only if the  $R$-module  generated by $\cN_k(\bb a)$
\[
	\cM_k(\bb a) := \linspan_{R}  \cN_k(\bb a)
\]
is finitely generated. In the case where $R = \ZZ$, this is equivalent to the assertion that $\cN_k(\bb a)$ spans a finite-dimensional space over $\QQ$. In particular, $k$-regular sequences include $k$-automatic sequences and polynomial sequences. Conversely, one can show that each finitely-valued $k$-regular sequence is $k$-automatic.

In analogy with asymptotically $k$-automatic sequences, we can define asymptotically $k$-regular sequences. Thus, a sequence $\bb a \in R^\infty$ is asymptotically $k$-regular if and only if $\tilde \cM_k(\bb a) := \cM_k(\bb a)/{\simeq}$ is a finitely-generated $R$-module
or, equivalently, if there exists a finite number of sequences $\bb a^{(0)},\bb a^{(1)}, \dots, \bb a^{(d-1)}$ such that for each $\bb b \in \cM_k(\bb a)$ there exist $t_0,t_1,\dots,t_{d-1} \in R$ such that \[
	\bb b \simeq t_0 \bb a^{(0)} + t_1 \bb a^{(1)} + \dots + t_{d-1} \bb a^{(d-1)}.
\]
Clearly, asymptotically $k$-regular sequence include $k$-regular sequences and asymptotically $k$-automatic sequences. It also remains true, with essentially the same argument, that each finitely-valued asymptotically $k$-regular sequence is asymptotically $k$-automatic.

An important feature of real-valued $k$-regular sequences is that their rate of growth is at most polynomial, meaning that if $\bb a \in \RR^\infty$ is $k$-regular then 
\[\limsup_{n \to \infty} \frac{\log \abs{a_n}}{\log n} < \infty.\]
This is no longer true for asymptotically $k$-regular sequences, which, as we will see below, can grow arbitrarily fast. This suggests that asymptotically $k$-regular sequences exhibit significantly less structured behaviour than $k$-regular sequences.

\begin{proposition}
	For each $f \colon \NN_0 \to \NN_0$, there exists a $k$-regular sequence $\bb a \in \ZZ^\infty$ such that $a_n \geq f(n)$ for all $n \in \NN_0$. 
\end{proposition}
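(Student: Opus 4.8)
The statement should be read with ``asymptotically $k$-regular'' in place of ``$k$-regular'' (genuine $k$-regular sequences over $\ZZ$ grow at most polynomially, as the discussion preceding the proposition recalls). The plan is to realise $\bb a$ as a sequence that is \emph{constant on the blocks of a partition $\cP$ of $\NN_0$ into finite blocks}, with $\cP$ chosen so that it is essentially preserved by the base-$k$ descent $n\mapsto\ip{n/k}$. Concretely, I want a set $B\subseteq\NN_0$ of density zero such that for each $n\notin B$ and each $0\le r<k$, the integers $n$ and $kn+r$ lie in the same block of $\cP$, and such that \emph{every} block is finite. Granting such a $\cP$, define
\[
	a_n:=\max\{\,f(m):\ m\text{ lies in the same block of }\cP\text{ as }n\,\}\qquad(n\in\NN_0),
\]
which is a well-defined non-negative integer because blocks are finite, and satisfies $a_n\ge f(n)$ for all $n$ by construction.

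It remains to check asymptotic $k$-regularity. For $0\le r<k$ and $n\notin B$ we have $a_{kn+r}=a_n$, so $\{n:a_{kn+r}\ne a_n\}\subseteq B$ has density zero, whence $(a_{kn+r})_{n=0}^\infty\simeq\bb a$. Iterating this — and using that for each fixed $i$ the set $\bigcup_{0\le j\le i}\{n:\ip{n/k^j}\in B\}$ is a finite union of density-zero sets, hence has density zero — one obtains $(a_{k^in+s})_{n=0}^\infty\simeq\bb a$ for all $i,s$. Thus $\tilde\cN_k(\bb a)=\{[\bb a]\}$, so $\tilde\cM_k(\bb a)=\ZZ\cdot[\bb a]$ is cyclic, in particular finitely generated, and $\bb a$ is asymptotically $k$-regular.

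Everything therefore reduces to producing the partition $\cP$, i.e.\ the density-zero ``cutting set'' $B$ which, viewing $\NN_0$ as the rooted base-$k$ tree, severs it into finite pieces. This is the heart of the matter and the step I expect to be the main obstacle: one wants a density-zero $B$ that nonetheless meets \emph{every} infinite descending chain $n_0\to n_1\to\cdots$, so that no block is infinite — and the two requirements pull against each other (a ``full level'' of the tree cuts everything but has density bounded below, while a sparse cut leaves infinite pieces). I would attempt this with a \emph{self-similar} construction: fix a finite complete prefix code $W\subseteq\Sigma_k^*$ (so $\sum_{w\in W}k^{-|w|}=1$) that is not a single level, call $n$ a cut point if $(n)_k$ factors as a concatenation of words of $W$, and let $B$ be the set of cut points; the blocks of $\cP$ are then indexed by finite concatenations of $W$-words, each block consisting of the $\#\mathrm{Pref}(W)$ strings obtained by appending a prefix of a $W$-word. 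Finiteness of the blocks is immediate, but the naive fixed-$W$ choice gives $B$ of positive density, so one must let the cutting pattern become thinner across scales (e.g.\ using codes $W_1,W_2,\dots$ of rapidly increasing depth at successive ``generations''); controlling the resulting density estimate $\#(B\cap[N])=o(N)$ is where the bulk of the technical work lies. (If even this cannot be pushed through to make \emph{all} blocks finite, the fallback is to arrange that the nodes lying in infinite blocks form a density-zero set and then add to $\bb a$ an arbitrary density-zero-supported correction making $a_n\ge f(n)$ there as well; this leaves $\tilde\cM_k(\bb a)$ unchanged and so preserves asymptotic $k$-regularity.) Once the density bound on $B$ is established, the argument above finishes the proof.
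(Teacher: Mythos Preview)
Your outline is reasonable and the reduction to constructing a density-zero ``cut set'' $B$ is correctly identified as the crux, but the proof is incomplete precisely at that point: you do not actually construct such a $B$, and it is not at all clear that a density-zero $B$ with \emph{all} blocks finite exists. A finite block below a node $n$ forces a bar (a finite antichain in $B$ covering all descending paths) in the subtree of $n$; summing the Kraft-type inequality $\sum_{s\in S_n}k^{-\mathrm{level}(s)}\ge k^{-\mathrm{level}(n)}$ over all $n$ at a fixed level shows that the tail sums $\sum_{\ell\ge L}|B_\ell|k^{-\ell}$ stay bounded below, so the covering requirement and the density-zero requirement pull hard against each other. Your proposed remedy (prefix codes of increasing depth) is suggestive but does not settle the question, and your fallback (infinite blocks of total density zero) also requires a construction you do not supply. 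As written, this is a plan rather than a proof.

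The paper takes a different and more concrete route that sidesteps the obstacle entirely. Rather than seeking finite blocks, it reuses the $3$-smooth construction from \cite{JK-Cobham-asympt}: with $(H_i)_i$ the increasing enumeration of $\{2^\alpha 3^\beta\}$, one sets $a_n=F(\beta_i)$ for $n\in[H_i,H_{i+1})$, where $F$ is an increasing function to be chosen. The key input (already proved in \cite{JK-Cobham-asympt}) is that $\beta_{i(2n)}=\beta_{i(2n+1)}=\beta_{i(n)}$ for almost all $n$, whence $\#\tilde\cN_2(\bb a)=1$ regardless of $F$. The ``blocks'' here (level sets of $\beta_{i(\cdot)}$) are \emph{infinite}, but each has density zero; this is exactly what lets one choose $\gamma_n\to\infty$ with $\beta_{i(n)}\ge\gamma_n$ almost everywhere and then take $F(\gamma)=\max\{f(n):\gamma_n\le\gamma\}$, giving $a_n\ge f(n)$ almost everywhere. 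A final density-zero correction $a_n'=\max(a_n,f(n))$ handles the exceptional set. So the paper's idea is not ``finite blocks and take the max'' but ``infinite density-zero blocks and choose the block-value via an auxiliary growth function'' --- a genuinely different mechanism that avoids the combinatorial covering problem you are stuck on.
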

\begin{proof}
	We rely (once more) on the construction in \cite[Sec.\ 4]{JK-Cobham-asympt}, cf.\ Prop.\ \ref{prop:freq:not-converge}. Let 
	\( \cH = \{H_i\}_{i=0}^\infty = \{2^{\alpha_i} 3^{\beta_i} \}_{i=0}^\infty\)
	 be as in Proposition \ref{prop:freq:not-converge}, and let $\bb a$ be given by
\begin{equation}\label{eq:49:def-f}
	a_n = F(\b_i) \text{ for } n \in [H_i,H_{i+1}) \text{ and } i \in \NN_0,
\end{equation}
where $F \colon \NN_0 \to \NN_0$ is an increasing function that remains to be specified. Following the same argument as in \cite[Lem.\ 4.2]{JK-Cobham-asympt}, we see that $\#\cN_2(\bb a) = 1$, which in particular implies that $\bb a$ is asymptotically $2$-regular. 

Since for each $\gamma \in \NN$, the union of all intervals $[H_i,H_{i+1})$ such that $\beta_i \leq \gamma$ has density $0$, we can construct a non-decreasing sequence $\bm \gamma \in \NN_0^\infty$ such that $\gamma_n \to \infty$ as $n \to \infty$ and for almost all $n$, letting $i$ denote the unique index such that $n \in [H_i,H_{i+1})$, we have $\gamma_n \leq \beta_i$. Thus, $a_n \geq F(\gamma_n)$ for almost all $n$. Picking $F(\gamma) := \max\set{ f(n) }{ \gamma_n \leq \gamma}$ we conclude that $a_n \geq f(n)$ for almost all $n$.

Finally, let $\bb a'$ be given by $a'_n = \max( a_n, f(n))$. Then directly from definition we have $a'_n \geq f(n)$ for all $n \in \NN_0$ and $\bb a'$ is asymptotically $2$-regular because $\bb a' \simeq \bb a$.
\end{proof}

In \cite{Bell-2006}, Bell proved the analogue of Cobham's theorem for $k$-regular sequences. With a similar argument, we can obtain a slightly stronger version, which is analogous to \cite[Thm.\ B]{JK-Cobham-asympt}. We will say that a sequence $\bb a \in R^\infty$ satisfies a linear recurrence (over $R$) if there exists $d \in \NN$ and $t_1,t_2,\dots,t_d \in R$ such that 	
	\begin{align*}
		a_{n+d} &= t_1 a_{n+d-1} + t_2 a_{n+d-2} + \dots + t_d a_n 
		&\text{for all $n \in \NN_0$.}
	\end{align*}
If $R = \ZZ$ and $\bb a$ is additionally $k$-regular for some $k \geq 2$ then this is equivalent to the requirement that there exists $Q \in \NN$ and polynomial sequences $p_m$ for $0 \leq m < Q$ such that $a_n = p_m(n)$ for each sufficiently large $n \equiv m \bmod Q$. For the sake of simplicity, we only consider the case where $R = \ZZ$ and the bases $k,\ell$ are coprime.

\begin{theorem}\label{thm:reg:main}
	Let $k,\ell \in \NN$ be coprime, and let $\bb a \in \ZZ^\infty$ be a sequence that is both $k$-regular and asymptotically $\ell$-regular. Then $\bb a \simeq \bb a'$ for a $k$-regular sequence $\bb a' \in \ZZ^\infty$ that satisfies a linear recurrence.
\end{theorem}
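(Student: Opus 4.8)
The strategy is to adapt Bell's proof \cite{Bell-2006} of the regular analogue of Cobham's theorem, replacing the classical Cobham theorem by its asymptotic counterpart \cite[Thm.\ B]{JK-Cobham-asympt} and tracking density-zero exceptional sets throughout. We may assume $k,\ell\geq 2$ (otherwise there is nothing to prove), so that coprimality of $k$ and $\ell$ forces them to be multiplicatively independent. We record two structural inputs. First, since $\bb a$ is $k$-regular over $\ZZ$, the module $\cM_k(\bb a)$ is finitely generated, so $\bb a$ admits an integer linear representation — a finite-dimensional space $W=\linspan_\QQ\cN_k(\bb a)$, a state-vector sequence $\bb s=(s_n)_{n=0}^\infty$ with $s_n\in W$, linear maps $B_0,\dots,B_{k-1}$ of $W$ with $s_{kn+i}=B_i s_n$, and a functional $w^*$ with $a_n=w^*(s_n)$ — and moreover $\bb a$ has at most polynomial growth, $\abs{a_n}=O(n^c)$ for some $c$ (see \cite{AlloucheShallit-book}). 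Second, since $\bb a$ is asymptotically $\ell$-regular, the obvious analogue of Proposition \ref{prop:basic:structure} for regular sequences — proved by the same argument, cf.\ \cite{JK-Cobham-asympt} — supplies a genuinely $\ell$-regular $\ZZ^d$-valued map $\Phi$ on $\Sigma_\ell^*$ and sequences $\bb c^{(1)},\dots,\bb c^{(d)}$ with $\bra{a_{\ell^{\abs v}n+[v]_\ell}}_{n=0}^\infty\simeq\sum_{j}\Phi_j(v)\,\bb c^{(j)}$ for every $v\in\Sigma_\ell^*$. We shall also use that, by the regular analogue of Lemma \ref{lem:basic:arith-prog}, every restriction of $\bb a$ to an arithmetic progression is again both $k$-regular and asymptotically $\ell$-regular.

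The next step is reduction modulo primes. Fix a prime $p$ and let $\bar{\bb a}_p:=\bb a\bmod p$. Reducing the integer linear representation modulo $p$ exhibits $\bar{\bb a}_p$ as a finitely valued, genuinely $k$-automatic sequence whose $k$-kernel spans a space of dimension at most $\dim W$ over $\FF_p$. On the other hand, $\bar{\bb a}_p$ is finitely valued and asymptotically $\ell$-regular, hence asymptotically $\ell$-automatic (by the fact, noted above, that finitely valued asymptotically regular sequences are asymptotically automatic). Since $k$ and $\ell$ are multiplicatively independent, \cite[Thm.\ B]{JK-Cobham-asympt} applies and shows that $\bar{\bb a}_p$ is asymptotically eventually periodic; equivalently, there is a genuinely eventually periodic integer sequence $\bb b_p$, of some period $Q_p$ and pre-period $N_p$, such that $a_n\equiv (b_p)_n\pmod p$ for all $n$ outside a set $Z_p$ of density zero.

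It remains to lift this family of congruences to a statement over $\ZZ$, which is carried out exactly as in Bell's argument: one combines the genuine $k$-regularity of $\bb a$ — which bounds, in terms of $\dim W$, $k$ and $\ell$, both the modulus $Q$ and the order of the linear recurrence that will emerge, and forces the relevant eigenvalue structure of the $B_i$ to be of quasi-polynomial type — with the growth bound $\abs{a_n}=O(n^c)$ — which bounds by roughly $c$ the degree of that quasi-polynomial — to produce a modulus $Q\in\NN$ and, for each $0\leq m<Q$, a polynomial $p_m$ of degree at most $c$ such that $a_{Qn+m}=p_m(n)$ for all sufficiently large $n$, modulo the exceptional sets. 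Putting $a'_{Qn+m}:=p_m(n)$ defines a sequence $\bb a'$ that is polynomial on residue classes modulo $Q$; such a sequence is simultaneously $k$-regular and satisfies a linear recurrence, and $\bb a'\simeq\bb a$, which is the desired conclusion.

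The main obstacle — and the only substantive new point beyond Bell's argument — lies in this final step. Bell's lifting uses the congruences $a_{n+Q}\equiv a_n\pmod p$ for \emph{all} primes $p$ and all large $n$, whereas here they hold only for $n\notin Z_p$, with $Z_p$ depending on $p$, and an infinite union of density-zero sets need not be density zero. This is circumvented by noting that, once $\dim W$ and the growth exponent $c$ are fixed, the quasi-polynomial data $(Q;\,p_m:0\leq m<Q)$ ranges over a set determined by only finitely many of the reductions $\bar{\bb a}_{p_1},\dots,\bar{\bb a}_{p_N}$: the modulus $Q$ and pre-period are bounded, and each $p_m$ is an integer-valued polynomial of degree at most $c$, hence pinned down by $\lfloor c\rfloor+1$ of its values, which are integers of bounded size and therefore determined modulo $p_1\cdots p_N$ once this product is large enough. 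Discarding only the finite union $Z_{p_1}\cup\dots\cup Z_{p_N}$, which still has density zero, then makes Bell's reconstruction go through and yields the required $\bb a'$.
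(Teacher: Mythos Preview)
Your strategy---reduce modulo each prime $p$, apply the asymptotic Cobham theorem to get asymptotic periodicity of $\bb a\bmod p$, then lift via Bell's machinery---matches the paper's opening move. The gap is in the lift. Bell's reconstruction (his Theorem~3.4) uses, for each fixed $n$, that $a_{n+Q}-a_n$ is divisible by arbitrarily large primes, hence zero. Here the congruence $a_{n+Q}\equiv a_n\pmod p$ holds only for $n\notin Z_p$, and since $Z_p$ varies with $p$ you cannot draw that conclusion for any particular $n$. Your proposed fix---restrict to finitely many primes $p_1,\dots,p_N$---does give $a_{n+Q}\equiv a_n\pmod{p_1\cdots p_N}$ on a density-one set, but $\abs{a_{n+Q}-a_n}$ grows like $n^c$, so this congruence determines nothing for large $n$. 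You then suggest pinning down each $p_m$ from its first $\lfloor c\rfloor{+}1$ values, which are indeed bounded integers; but to read those values off from the congruences you would need them to hold at the specific points $m, Q{+}m,\dots, cQ{+}m$, and nothing prevents $Z_{p_1}\cup\dots\cup Z_{p_N}$ (a density-zero set) from containing all of them. So the circumvention, as written, does not close the loop.

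The paper supplies two ingredients you are missing. First, in the case where every $\Lambda_i$ is invertible on $\linspan_\QQ\cN_k(\bb a)$, it proves a rigidity lemma (Lemma~\ref{lem:reg:invert}): two $k$-regular $\FF_p$-valued sequences with invertible kernel maps that agree almost everywhere must agree everywhere. This upgrades ``$\bb a\bmod p$ is asymptotically periodic'' to ``$\bb a\bmod p$ is exactly periodic'' for large $p$, after which Bell's Theorem~3.4 applies verbatim and yields an exact linear recurrence. Second, in the non-invertible case the paper does not attempt to lift from residues at all: it applies a $(d{+}1)$-st difference along step $k^iQ$ to annihilate the quasi-polynomial pieces coming from the invertible case, observes that the resulting sequences $\bb b^{(i)}$ have support of density tending to $0$ as $i\to\infty$, and then uses the finite generation of $\tilde\cM_\ell(\bb a)$ to force $\bb b^{(i)}\simeq 0$ for some $i$ (via a discreteness fact: nonzero classes in $\tilde\cM_\ell(\bb a)$ have support density bounded away from $0$). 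This is where the asymptotic $\ell$-regularity hypothesis is actually consumed; your proposal invokes the structure of $\tilde\cM_\ell(\bb a)$ at the outset but never exploits it.
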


\renewcommand{\Lambda}{T}

Before we approach the proof, it will be convenient to introduce some preliminaries. For $u \in \Sigma_k$ we define a linear map $\Lambda_u \colon \QQ^\infty \to \QQ^\infty$, given by
\begin{align}\label{eq:reg:def-of-T}
	\Lambda_u(\bb a) &:= \bra{a_{k^{\abs{u}}n + [u]_k}}_{n=0}^\infty, & \bb a \in \QQ^\infty.
\end{align}
The operators $\Lambda_u$ follow a familiar composition rule: $\Lambda_{uv} = \Lambda_v \Lambda_u$ for $u,v \in \Sigma_k^*$. 
We note that, for a $k$-regular sequence $\bb a$, the module $\cM_k(\bb a)$ and the space $\linspan_{\QQ} \cN_k(\bb a)$ are invariant under $\Lambda_{u}$. Since the space of sequences that are almost everywhere $0$ is closed under $\Lambda_u$, for each asymptotically $k$-regular $\bb a$ we also have the map $\Lambda_u \colon \tilde \cM_k(\bb a) \to \tilde \cM_k(\bb a)$. The same formula \eqref{eq:reg:def-of-T} defines a map $T_u \colon \FF_p^\infty \to \FF_p^\infty$ for prime $p$.

\begin{lemma}\label{lem:reg:invert}
	Let $k \in \NN$, let $p$ be a prime and let $\bb a, \bb a' \in \FF_p^\infty$ be $k$-regular sequences such that for all $i \in \Sigma_k$, the maps $\Lambda_i \colon \cM_k(\bb a) \to \cM_k(\bb a)$ and $\Lambda_i \colon \cM_k(\bb a') \to \cM_k(\bb a')$ are invertible. If $\bb a \simeq \bb a'$ then $\bb a = \bb a'$. 
\end{lemma}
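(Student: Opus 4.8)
The plan is to apply the statement to $\bb b := \bb a - \bb a'$, after reformulating it as: \emph{a $k$-regular sequence $\bb b \in \FF_p^\infty$ with $\bb b \simeq \mathbf{0}$ and with every $\Lambda_i$ ($i \in \Sigma_k$) acting invertibly on $V := \cM_k(\bb b)$ must satisfy $\bb b = \mathbf{0}$}. To set this up, observe that $\cN_k(\bb b) = \set{\Lambda_u\bb a - \Lambda_u\bb a'}{u \in \Sigma_k^*}$ is contained in the finite set $\set{\bb c - \bb c'}{\bb c \in \cM_k(\bb a),\ \bb c' \in \cM_k(\bb a')}$, so $V = \cM_k(\bb b) = \linspan_{\FF_p}\cN_k(\bb b)$ is a finite $\FF_p$-vector space, and $\bb b \simeq \mathbf{0}$ holds by hypothesis.

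First I would verify that each $\Lambda_i$ acts invertibly on $V$; this is the only place the invertibility hypothesis on $\cM_k(\bb a)$ and $\cM_k(\bb a')$ enters. Set $N := \linspan_{\FF_p}\set{(\Lambda_u\bb a, \Lambda_u\bb a')}{u \in \Sigma_k^*} \subseteq \cM_k(\bb a)\oplus\cM_k(\bb a')$. This is a finite subspace invariant under each $\Lambda_i\oplus\Lambda_i$, and since $\Lambda_i\oplus\Lambda_i$ is invertible on the ambient module, its restriction to $N$ is injective, hence — $N$ being finite — bijective. The $\FF_p$-linear surjection $\pi\colon N \to V$, $\pi(\bb c,\bb c') = \bb c - \bb c'$, intertwines $\Lambda_i\oplus\Lambda_i$ with $\Lambda_i$, so $\Lambda_i(V) = \pi\big((\Lambda_i\oplus\Lambda_i)(N)\big) = \pi(N) = V$; as $V$ is finite, $\Lambda_i|_V$ is bijective. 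Consequently $\Lambda_u|_V$ is invertible for every $u \in \Sigma_k^*$, being a composition of the maps $\Lambda_i|_V$ (or the identity when $u = \epsilon$).

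The heart of the argument is then the following. Every $\bb c \in V$ satisfies $\bb c \simeq \mathbf{0}$: for a single word $u$, $\Lambda_u\bb b$ has support $\set{n}{b_{k^{\abs{u}}n + [u]_k} \neq 0}$, whose intersection with $[N]$ has size at most $\#\set{m < k^{\abs{u}}N}{b_m \neq 0} = o(N)$, and a finite $\FF_p$-combination of sequences with density-zero support again has density-zero support. Now suppose for contradiction that $V \neq \{\mathbf{0}\}$, and pick $\bb v \in V$ with $v_m \neq 0$ for some $m$. Since $(\Lambda_{(m)_k}\bb v)_0 = v_m$ and $\Lambda_{(m)_k}\bb v \in V$, the evaluation functional $\mathrm{ev}_0\colon V \to \FF_p$, $\bb w \mapsto w_0$, is nonzero. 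For an arbitrary $n \in \NN_0$ one has $\mathrm{ev}_n = \mathrm{ev}_0 \circ (\Lambda_{(n)_k}|_V)$ as functionals on $V$; since $\Lambda_{(n)_k}|_V$ is invertible, $\mathrm{ev}_n \neq 0$, i.e. for each $n$ there is some $\bb w \in V$ with $w_n \neq 0$. Picking such a $\bb w = \sigma(n)$ and applying the pigeonhole principle over the finite set $V$, some fixed $\bb w_\ast \in V$ has $\#\set{n < N}{\sigma(n) = \bb w_\ast} \geq N/\#V$ for infinitely many $N$, whence $\overline{d}\big(\set{n}{(\bb w_\ast)_n \neq 0}\big) \geq 1/\#V > 0$, contradicting $\bb w_\ast \simeq \mathbf{0}$. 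Hence $V = \{\mathbf{0}\}$, so in particular $\bb b = \bb a - \bb a' = \mathbf{0}$.

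The main obstacle I anticipate is the bookkeeping around the operators $\Lambda_u$: one must be careful that $\cM_k(\bb a - \bb a')$ genuinely inherits invertibility of the $\Lambda_i$ — this crucially uses finiteness of the modules, since an invertible operator need not restrict to a bijection of an invariant subspace unless that subspace is finite — and that the identity $\mathrm{ev}_n = \mathrm{ev}_0 \circ \Lambda_{(n)_k}$ is applied with the correct convention for base-$k$ digit strings. The remaining steps are routine.
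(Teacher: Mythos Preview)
Your proof is correct and follows a genuinely different route from the paper's. The paper works with the indicator sequence $b_n := [a_n \neq a'_n]$, notes that it is $k$-automatic and $\simeq \mathbf 0$, and then invokes the structural fact that a density-zero $k$-automatic $\{0,1\}$-sequence must vanish at every $n$ whose base-$k$ expansion contains some fixed word $w$ as a subword; this gives $\Lambda_w \bb a = \Lambda_w \bb a'$, and invertibility of $\Lambda_w$ on the finite space $\cM_k(\bb a)+\cM_k(\bb a')$ finishes. You instead pass to the difference $\bb b = \bb a - \bb a'$ and its module $V = \cM_k(\bb b)$, verify (via the finite fibre product $N$) that each $\Lambda_i$ acts bijectively on $V$, observe that every element of $V$ is $\simeq \mathbf 0$, and then argue by pigeonhole: if $V \neq \{0\}$ then every evaluation functional $\mathrm{ev}_n = \mathrm{ev}_0 \circ \Lambda_{(n)_k}$ is nonzero on $V$, forcing some fixed element of $V$ to have support of positive upper density. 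The paper's argument is shorter but leans on a (standard, though not entirely trivial) fact about density-zero automatic sets; yours is longer but self-contained, and it makes explicit at each step the role of finiteness of the $\FF_p$-modules --- which is precisely where the hypotheses do their real work.
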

\begin{proof}
	The sequence $\bb b \in \FF_p^\infty$ given by
	\[
		b_n = 
		\begin{cases}
			0 &\text{if } a_n = a_n',\\
			1 &\text{if } a_n \neq a_n',\\			
		\end{cases}
	\]	
	is almost everywhere zero and $k$-regular, and hence $k$-automatic, since $\FF_p$ is finite. It follows that there exists a word $w \in \Sigma_k^*$ such that $a_n = a_n'$ for all $n \in \NN_0$ such that $(n)_k$ contains $w$ as a subword. In particular, we have $\Lambda_w(\bb a) = \Lambda_w(\bb a')$. Since $\Lambda_w$ is invertible (on either $\cM_k(\bb a)$ or $\cM_k(\bb a')$) we conclude that $\bb a = \bb a'$.
\end{proof}

\begin{lemma}\label{lem:reg:sep-0}
	Let $k \in \NN$ and let $\bb a \in \QQ^\infty$ be a $k$-regular sequence. Then there exists $\theta > 0$ such that for each $\bb a' \in \cM_k(\bb a)$ either $\bb a' \simeq \bb 0$ or $\bar{d}(\set{n \in \NN_0}{a'_n \neq 0}) \geq \theta$.
\end{lemma}
\begin{proof}
Suppose, conversely, that we have a family $\bb a^{(i)} \in \cM_k(\bb a)$ ($i \in \NN_0$) with 
\begin{align*}
0 < \bar{d} \bra{\set{n \in \NN_0}{a^{(i)}_n \neq 0}} &\to 0 & \text{as $i \to \infty$}.
\end{align*}
Passing to a subsequence we can assume without loss of generality that 
\begin{align*}
\bar{d} \bra{\set{n \in \NN_0}{a^{(i)}_n \neq 0}} &\geq
2 \bar{d} \bra{\set{n \in \NN_0}{a^{(i+1)}_n \neq 0}} 
& \text{for each $i \in \NN_0$.}
\end{align*}
As a consequence, for each linear combination
\begin{align*}
	\bb b = \sum_{i = I}^{I+J} t_i \bb a^{(i)}
\end{align*}
with $I,J \in \NN_0$, $t_i \in \QQ$ and $t_I \neq 0$ we have 
\begin{align*}
\bar{d} \bra{\set{n \in \NN_0}{b_n \neq 0}} & \geq \bar{d} \bra{\set{n \in \NN_0}{a^{(i)}_n \neq 0}} \cdot \bra{1 - \frac{1}{2} - \dots - \frac{1}{2^J}} > 0.
\end{align*}
As a consequence, the sequences $\bb a^{(i)}$ are linearly independent, contradicting the assumption of regularity.
\end{proof}

\begin{proof}[Proof of Theorem \ref{thm:reg:main}]
	We first consider the case where for each $i \in \Sigma_k^*$ the map $\Lambda_i$ is invertible on $\linspan_{\QQ} \cN_k(\bb a)$. For each prime $p$ the sequence $\bb a \bmod p := \bra{ a_n \bmod p}_{n=0}^\infty$ is both $k$-automatic and asymptotically $\ell$-automatic. Hence, it follows from \cite[Thm.\ B]{JK-Cobham-asympt} that $\bb a \bmod p$ is asymptotically periodic. With the same argument as in \cite[Prop.\ 3.10]{Bell-2006}, we see that $\Lambda_u (\bb a) \bmod p$ is asymptotically periodic with period coprime to $k$ for each $u \in \Sigma_k^*$ with $\abs{u} \geq \dim \linspan_{\QQ} \cN_k(\bb a)$. Note that the maps $\Lambda_i \mod p \colon \cM_k(\bb a \bmod p) \to \cM_k(\bb a \bmod p)$ are invertible for all sufficiently large primes $p$. Thus, it follows from Lemma \ref{lem:reg:invert} that $\Lambda_u(\bb a) \bmod p$ is (exactly) periodic. Hence, we conclude from \cite[Thm.\ 3.4]{Bell-2006} that $\Lambda_u(\bb a)$ satisfies a linear recurrence. Applying this result to all $u \in \Sigma_k^*$ with $\abs{u} = \dim  \linspan_{\QQ} \cN_k(\bb a)$, we conclude that $\bb a$ satisfies a linear recurrence (c.f.\ \cite[Thm.\ 3.11]{Bell-2006}). Additionally, inspecting the proof of \cite[Thm.\ 3.4]{Bell-2006} we see that the length of said linear recurrence is bounded by a quantity dependent only on $\dim  \linspan_{\QQ} \cN_k(\bb a)$.

	Consider now the general case. Let $w \in \Sigma_k^*$ be such that $\dim \Lambda_{w} \bra{\linspan_{\QQ} \cN_k(\bb a)}$ is least possible. Then for each $u \in \Sigma_k^*$ such that $w$ is a subword of $u$, the sequence $\bb a' = \Lambda_{u}(\bb a)$ has the property that for each $i \in \Sigma_k$, the restriction of $\Lambda_i$ to $\cM_k(\bb a')$ is invertible. Hence, we conclude from the special case considered above that the restriction of $\bb a'$ to each residue class modulo some $Q = O(1)$ coincides with a polynomial sequence of degree at most $d = O(1)$. Thus, we have constructed a large family of arithmetic progressions of the form $k^iQ \NN_0 + r$ ($i,r \in \NN_0$) where $\bb a$ behaves in the expected way; our remaining task is to understand how the restrictions of $\bb a$ to different progressions are interrelated.
	
	 For $i \in \NN_0$, let $\bb b^{(i)}$ denote the sequence given by
\begin{align}\label{eq:reg:27:01}
	b^{(i)}_n &= \sum_{m=0}^{d+1} (-1)^m \binom{d+1}{m} a_{n + k^i Q m} & \text{for } n \in \NN_0.	
\end{align}
Then $b^{(i)}_n = 0$ for each $n \in \NN_0$ such that $(n)_k^i$ contains $w$ as a subword. For $j,s \in \NN_0$ such that $s + k^i Q (d+1) < \ell^j$, consider further the sequence $\bb c^{(i,j,s)}$ given by
\begin{align}\label{eq:reg:27:02}
	c^{(i,j,s)}_n &= b^{(i)}_{\ell^j n + s} & \text{for } n \in \NN_0.	
\end{align}
Inspecting the definitions, we see that $\bb c^{(i,j,s)} \in \cM_\ell(\bb a)$. Moreover, since $k$ and $\ell$ are coprime, we have
\begin{align*}&\bar{d} \bra{\set{n \in \NN_0}{c^{(i,j,s)}_n \neq 0}} 
	\\ \leq
 &	\bar{d} \bra{\set{n \in \NN_0}{w \text{ does not appear in } (n)_k^i}} \to 0 \text{ as } i \to \infty.
\end{align*}
Thus, it follows from Lemma \ref{lem:reg:sep-0} that there exists $i \in \NN_0$ such that for all $j,s$ like above we have $\bb c^{(i,j,s)} \simeq 0$. In other words, $b^{(i)}_{n} = 0$ for almost all $n \in \ell^j \NN_0 + s$. Note that almost all positive integers belong to $\ell^j \NN_0 + s$ for some admissible $j,s$; more precisely, we have
\begin{align*}
	\underline{d}\bra{ \bigcup_{j=0}^{J} \bigcup_{s = 0}^{\ell^j - k^i Q (d+1)} \bra{ \ell^j \NN_0 + s} 
	} &\to 1 & \text{as } J \to \infty.
\end{align*}

It follows that $\bb b^{(i)} \simeq 0$. Thus, we may partition $\NN_0$ into a union of pairwise disjoint arithmetic progressions  $(I_m)_{m=1}^\infty$ with step $k^i Q$ such that $\sum_{m =1}^{M} \abs{I_m}/M \to \infty$ as $M \to \infty$ and for each $m$, $(a_n)_{n \in I_m}$ coincides with a polynomial sequence of degree at most $d$. Consider any pair of infinite arithmetic progressions $P_1 = k^{i_1}Q\NN_0 + r_1$ and $P_2 = k^{i_2}Q\NN_0 + r_2$ with $0 \leq r_1 < k^{i_1}$, $0 \leq r_2 < k^{i_2}$ such that the restrictions of $\bb a$ to either progression coincides coincides with a polynomial of degree at most $d$, as constructed earlier. Suppose additionally that $i_1,i_2 \geq i$ and $r_1 \equiv r_2 \bmod k^i$. Then we can find $m$ such that $\# (P_1 \cap I_m), \# (P_2 \cap I_m) > d$ and as a consequence the restrictions of $\bb a$ to $P_1$ and $P_2$ (and $I_m$) are given by the same polynomial. Varying the choice of $P_1$ and $P_2$ we conclude that $\bb a$ coincides with a polynomial of degree at most $d$ almost everywhere on each residue class modulo $k^iQ$.
\end{proof}

We close this section with a conjecture, which appears to be the correct analogue of Cobham's theorem (cf.\ \cite[Thm.\ 1.4]{Bell-2006} and \cite[Thm.\ A]{JK-Cobham-asympt}).
\begin{conjecture}
	Let $k,\ell \in \NN$ be multiplcatively independent, and let $\bb a \in \ZZ^\infty$ be a sequence that is both asymptotically $k$-regular and asymptotically $\ell$-regular. Then $\bb a$ satisfies a linear recurrence almost everywhere, i.e., there exists $d \in \NN$ and $t_1,t_2,\dots,t_d \in \ZZ$ such that
	\begin{align*}
		a_{n+d} &= t_1 a_{n+d-1} + t_2 a_{n+d-2} + \dots + t_d a_n 
		&\text{for almost all $n \in \NN_0$.}
	\end{align*}
\end{conjecture}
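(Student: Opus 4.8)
\emph{Proof strategy.} When $\bb a$ is finitely valued the conjecture is already a theorem: a finitely-valued asymptotically $k$-regular sequence is asymptotically $k$-automatic, so $\bb a$ is asymptotically automatic in the two multiplicatively independent bases $k$ and $\ell$, whence by \cite[Thm.\ A]{JK-Cobham-asympt} it is asymptotically invariant under a shift, i.e.\ $a_{n+m} = a_n$ for almost all $n$ and some $m \in \NN$ --- already a linear recurrence. The content of the conjecture thus lies with sequences of unbounded growth, and the natural plan is to imitate Bell's proof of Cobham's theorem for regular sequences \cite[Thm.\ 1.4]{Bell-2006} and the proof of Theorem \ref{thm:reg:main}, substituting \cite[Thm.\ A]{JK-Cobham-asympt} for \cite[Thm.\ B]{JK-Cobham-asympt} wherever a Cobham-type input is needed. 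As in Theorem \ref{thm:reg:main} one would first reduce to the case where, for every $i \in \Sigma_k$, the operator $\Lambda_i$ acts invertibly on $\tilde\cM_k(\bb a)\otimes\QQ$: choosing $w \in \Sigma_k^*$ so that $\dim_\QQ \Lambda_w\bra{\tilde\cM_k(\bb a)\otimes\QQ}$ is least possible and replacing $\bb a$ by $\Lambda_w(\bb a)$ preserves asymptotic $k$-regularity and --- since $\Lambda_w(\bb a)$ is the restriction of $\bb a$ to an arithmetic progression --- asymptotic $\ell$-regularity as well, by the regular analogue of Lemma \ref{lem:basic:arith-prog}.

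With this reduction in hand, one fixes a prime $p$ and considers $\bb a \bmod p$, which is finitely valued and hence asymptotically $k$- and $\ell$-automatic; since $k,\ell$ are multiplicatively independent, \cite[Thm.\ A]{JK-Cobham-asympt} shows that $\bb a \bmod p$ is asymptotically invariant under a shift, so there are $m_p \in \NN$ and a density-zero set $E_p \subset \NN_0$ with $a_{n+m_p} \equiv a_n \pmod p$ for all $n \notin E_p$. The remaining task is to run, modulo the sets $E_p$, the argument of \cite[Thms.\ 3.4 and 3.11]{Bell-2006}: invertibility of the $\Lambda_i$ together with finite generation of $\tilde\cM_k(\bb a)\otimes\QQ$ should force --- with an order bounded independently of $p$ --- a $\ZZ$-linear recurrence for $\bb a$ out of the mod-$p$ shift-invariance for infinitely many $p$, while finite generation of $\tilde\cM_\ell(\bb a)$ is deployed as in the $\bb c^{(i,j,s)}$ construction in the proof of Theorem \ref{thm:reg:main} --- with the rigidity estimate \eqref{eq:reg:27:04} at its heart --- to promote congruences valid modulo infinitely many $p$ off density-zero sets into exact identities valid almost everywhere over $\ZZ$.

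The hard part is precisely this promotion. In Theorem \ref{thm:reg:main} one of the two bases carries genuine (not merely asymptotic) regularity, so $\bb a \bmod p$ is genuinely automatic, \cite[Thm.\ B]{JK-Cobham-asympt} gives asymptotic \emph{periodicity}, and Lemma \ref{lem:reg:invert} upgrades this to exact periodicity of $\bb a \bmod p$ --- after which the exceptional sets have disappeared entirely. Here both reductions are only asymptotically automatic, \cite[Thm.\ A]{JK-Cobham-asympt} gives only asymptotic \emph{shift-invariance} (and, as the example of \cite{JK-Cobham-asympt} recalled in Section \ref{sec:bases} shows, this weakening is unavoidable), so Lemma \ref{lem:reg:invert} no longer applies and the sets $E_p$ persist. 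Controlling the joint behaviour of the $m_p$ and $E_p$ well enough to extract a single density-zero exceptional set from the interaction of the two finitely generated modules $\tilde\cM_k(\bb a)$ and $\tilde\cM_\ell(\bb a)$ --- in effect, an asymptotic version of \cite[Thm.\ 3.4]{Bell-2006} tolerant of density-zero perturbations that vary with the modulus --- appears to require a genuinely new input, which is why the statement is left as a conjecture.
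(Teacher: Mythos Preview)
The statement is a \emph{conjecture}: the paper does not prove it, and explicitly presents it as open. Your proposal is therefore not to be measured against a proof in the paper --- there is none --- but as a discussion of the natural strategy and its obstruction, and in that capacity it is accurate and well-aimed. You correctly observe that the finitely-valued case is already settled by \cite[Thm.\ A]{JK-Cobham-asympt} (asymptotic shift-invariance is a linear recurrence almost everywhere), and that the template for the general case is the proof of Theorem \ref{thm:reg:main} combined with Bell's argument. You also put your finger on exactly the gap the paper leaves open: in Theorem \ref{thm:reg:main} one side is genuinely $k$-regular, so $\bb a \bmod p$ is genuinely $k$-automatic, \cite[Thm.\ B]{JK-Cobham-asympt} yields asymptotic \emph{periodicity}, and Lemma \ref{lem:reg:invert} eliminates the exceptional sets entirely; here only \cite[Thm.\ A]{JK-Cobham-asympt} applies, giving merely asymptotic shift-invariance with $p$-dependent density-zero exceptional sets $E_p$ that no available lemma removes. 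Your diagnosis that an asymptotic, modulus-tolerant version of \cite[Thm.\ 3.4]{Bell-2006} is the missing ingredient matches the spirit of the paper's closing remark.
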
 
\appendix

\section{Shift invariant multiplicative sequences}\color{checked}

\begin{proof}[{Proof of Theorem \ref{thm:class:mult-periodic}}]
{\color{checked}
	Suppose first that $\bb a$ is invariant under the shift by $1$. Then in particular
	\begin{align}\label{eq:class:902:1}	
		\frac{1}{N} \sum_{n = 1}^{N} \abs{ a_{n+1} - a_n} = 0.
	\end{align}
	Hence, K\'{a}tai's conjecture proved by Klurman \cite[Thm.\ 1.8]{Klurman-2017} implies that either 
	\begin{align}\label{eq:class:902:2}
		\frac{1}{N} \sum_{n = 1}^{N} \abs{a_n} = 0,
	\end{align}
	or there exists $s \in \CC$ with $\Re(s) < 1$ such that $a_n = n^s$ for all $n \in \NN$. In the former case, since the set $\set{ \abs{a_n} }{ n \in \NN,\ a_n \neq 0}$ is bounded away from $0$, we have $\bb a \simeq \bf 0$. In the latter case, since $a_n$ is finitely-valued, we have $s = 0$, meaning that $\bb a = \bf 1$.
}

{
	Let us now consider the general case, where $\bb a$ is invariant under the shift by some $d \in \NN$, not necessarily equal to $1$. We will heavily rely on the material in \cite{Klurman-2017}. If $\bb a \simeq \bf 0$ then there is nothing left to prove, so assume that this is not the case. 
	It will be convenient to decompose $\bb a$ as the product of multiplicative sequences $\bb a^{(p)}$ corresponding to different primes $p$. If $p$ is a prime then the sequence $\bb a^{(p)}$ is specified for primes $r$ and exponents $i$ by
	\begin{align*}
		a^{(p)}_{r^i} =
		\begin{cases}
			a_{r^i} 	&\text{if } r = p,\\
			1 				&\text{otherwise},
		\end{cases}		
	\end{align*}
	and extended to $\NN$ by multiplicativity. Clearly, $a_n = \prod_{p} a_n^{(p)}$ for all $n \in \NN$. We will also use the sequences $\bb c$ and $\bb c^{(p)}$ given by $c_n = \absnormal{a_n}^2$ and $c_n^{(p)} = \absnormal{a_n^{(p)}}^2$.
}

Let us consider the limit
	\begin{align}\label{eq:class:718:1}
		A &:= \lim_{N \to \infty} \frac{1}{N} \sum_{n = 1}^{N} a_{n} \bar a_{n+d}.
	\end{align}
	On one hand, because $\bb a$ is asymptotically invariant under the shift by $d$, we have
	\begin{align}\label{eq:class:718:2}
		A &= \lim_{N \to \infty} \frac{1}{N} \sum_{n = 1}^{N} a_{n} \bar a_{n} = \lim_{N \to \infty} \frac{1}{N} \sum_{n = 1}^{N} c_{n}.
	\end{align}	
	By Hal\'{a}sz's theorem \cite{Halasz-1971} (cf.\ eq.\ (1) in \cite{Klurman-2017}), the mean value of $\bb c$ exists (thus, $A$ is well-defined) and is the product of contributions coming from the primes:
	\begin{align}\label{eq:class:718:3}
		A &= \prod_{p} C_p, & \text{where} && C_p & := \lim_{N \to \infty} \frac{1}{N} \sum_{n = 1}^{N} c_{n}^{(p)}.
	\end{align}		
Note that $A$ is real and $A > 0$ because the limit in \eqref{eq:class:718:2} converges and $\bb a$ is finitely-valued and not asymptotically equal to $\bf 0$.

The ``distance'' between $\bb a$ and another $1$-bounded multiplicative sequence $\bb b$ is defined by
	\[
		\D(\bb a,\bb b) := \sqrt{ \sum_{p \text{ prime}} \frac{1 - \Re\bra{a_p \bar b_p}}{p} } \in [0,\infty].
	\]

\begin{claim}
	If $\D(\bb a, \bf 1) < \infty$ then for each prime $p$ there exists a root of unity $\lambda$ such that $a_{n+d}^{(p)} = \lambda a_n^{(p)}$ for almost all $n \in \NN$.
\end{claim}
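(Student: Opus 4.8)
Fix a prime $p$ and recall the decomposition $\bb a = \prod_{q} \bb a^{(q)}$, noting that $a^{(p)}_n$ depends on $n$ only through $\nu_p(n)$. The plan is to compare two factorisations of the correlation $A$ from \eqref{eq:class:718:1}: the one coming from shift-invariance, $A = \prod_p C_p$ (already established in \eqref{eq:class:718:2}--\eqref{eq:class:718:3}, with $A$ a positive real), and the Euler product one coming from the pretentiousness hypothesis $\D(\bb a,\bf 1) < \infty$. Matching them prime by prime via Cauchy--Schwarz will pin down the local structure of $\bb a^{(p)}$.

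\textbf{Main steps.} First, since $\D(\bb a,\bf 1) < \infty$ the sequence $\bb a$ pretends to be the constant $\bf 1$, so the standard factorisation of binary correlations of pretentious multiplicative functions applies (this is the point where we lean on the pretentious machinery of \cite{Klurman-2017}): the limit $A$ exists and equals an absolutely convergent product $A = \prod_p E_p$, where $E_p := \int_{\ZZ_p} a^{(p)}_x\, \overline{a^{(p)}_{x+d}}\; d\mu_p(x)$ is the average of $a^{(p)}_n \overline{a^{(p)}_{n+d}}$ with respect to the natural (Haar) density, and depends only on $(a_{p^j})_{j \geq 0}$ and $\nu_p(d)$. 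Second, by \eqref{eq:class:718:2}--\eqref{eq:class:718:3} we also have $A = \prod_p C_p$ with $C_p = \int_{\ZZ_p} \absnormal{a^{(p)}_x}^2\, d\mu_p(x)$. Third, Cauchy--Schwarz together with the translation-invariance of $\mu_p$ (so $\int_{\ZZ_p}\absnormal{a^{(p)}_{x+d}}^2 d\mu_p = C_p$) gives $\abs{E_p} \leq C_p$ for every $p$. Fourth, combining the two factorisations, $0 < A = \absbig{\prod_p E_p} = \prod_p \abs{E_p} \leq \prod_p C_p = A$, forcing $\abs{E_p} = C_p$ for \emph{every} prime $p$. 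Fifth, $C_p = 0$ would give $A = 0$, so $C_p > 0$, i.e.\ $\bb a^{(p)} \not\simeq \bf 0$; the equality case of Cauchy--Schwarz then yields a unimodular constant $\lambda$ with $a^{(p)}_{x+d} = \lambda\, a^{(p)}_x$ for $\mu_p$-almost every $x$, which (since $a^{(p)}_n$ depends only on $\nu_p(n)$, and density on $\NN$ matches Haar measure on $\ZZ_p$ for such sets) is exactly $a^{(p)}_{n+d} = \lambda\, a^{(p)}_n$ for almost all $n \in \NN$.

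\textbf{From unimodular to root of unity.} Iterating the last relation gives $a^{(p)}_{n+md} = \lambda^m a^{(p)}_n$ for almost all $n$, for each fixed $m \in \NN$. Choose $m \in \{1,2\}$ so that the set $\set{n \in \NN}{\nu_p(n) = \nu_p(n+md) = 0}$ has positive density: for $p$ odd take $m = 1$ (the set has density $\geq (p-2)/p > 0$), and for $p = 2$ take $m = 1$ if $2\mid d$ and $m = 2$ if $2\nmid d$ (density $1/2$ in either case, since then $\nu_2(md) > 0$). On this set $a^{(p)}_n = a^{(p)}_{n+md} = 1$ for almost all $n$, hence $\lambda^m = 1$, so $\lambda$ is a root of unity (indeed $\lambda \in \{1,-1\}$).

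\textbf{Expected obstacle.} The only non-routine step is the first one: justifying that the correlation of $\bb a$ is literally the Euler product $\prod_p E_p$, i.e.\ that the limit in $N$ and the product over $p$ may be interchanged. This uses $\D(\bb a,\bf 1) < \infty$ in an essential way — without pretentiousness the correlation could be $0$ and no such factorisation would hold — and is precisely the input we take from \cite{Klurman-2017}. The rest (Cauchy--Schwarz, its equality case, and the density computation with $p$-adic valuations) is elementary.
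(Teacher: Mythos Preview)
Your argument is correct and follows essentially the same route as the paper: invoke Klurman's factorisation of the correlation $A$ as $\prod_p B_p$ (your $E_p$), compare with $A = \prod_p C_p$ via Cauchy--Schwarz, and use the equality case to obtain $a^{(p)}_{n+d} = \lambda a^{(p)}_n$ almost everywhere. The only difference is in the final step: the paper deduces that $\lambda$ is a root of unity from the finite-valuedness of $\bb a^{(p)}$ (the orbit $z,\lambda z,\lambda^2 z,\dots$ must be finite), whereas your argument exploits $a^{(p)}_n = 1$ on the positive-density set $\{n : p\nmid n,\ p\nmid n+md\}$ and in fact yields the sharper conclusion $\lambda \in \{\pm 1\}$ (indeed $\lambda = 1$ for odd $p$).
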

\begin{claimproof}
	By Theorem 1.3 in \cite{Klurman-2017}, the limit in \eqref{eq:class:718:1} admits a factorisation:
	\begin{align}\label{eq:class:718:4}
		A &= \prod_{p} B_p, & \text{where} && B_p & := \lim_{N \to \infty} \frac{1}{N} \sum_{n = 1}^{N} a_{n}^{(p)} \bar a_{n+d}^{(p)}.
	\end{align}	
	
	An application of the Cauchy--Schwarz inequality shows that $\abs{B_p} \leq C_p$, and hence comparing \eqref{eq:class:718:3} and \eqref{eq:class:718:4} we conclude that $\abs{B_p} = C_p$ for all primes $p$. Let $p$ be a prime and pick $\lambda \in \CC$ with $\abs{\lambda}$ such that $\lambda B_p \in \RR_+$. Then 
	\begin{align}\label{eq:class:718:5}
		0 \leq \lim_{N \to \infty} \frac{1}{N} \sum_{n = 1}^{N} \abs{a_{n+d}^{(p)} - a_n^{(p)}}^2 = 2 C_p - 2 \abs{B_p} = 0.
	\end{align}	
	Since $\bb a$ is finitely-valued, it follows  	
that $a_{n+d}^{(p)} = \lambda a_n^{(p)}$ for almost all $n$.	For each $z \in \CC$ that appears in $\bb a$ with positive upper frequency, the same is true of $\lambda z, \lambda^2 z,\dots$, and consequently $\lambda$ is a root of unity.
\end{claimproof}
	
\begin{claim}
	Let $p$ be a prime and let $\lambda$ be a root of unity such that $a_{n+d}^{(p)} = \lambda a_n^{(p)}$ for almost all $n \in \NN$. Then $\bb a^{(p)}$ is periodic. If $p \nmid 2d$ then $\bb a^{(p)} = \bf 1$.
\end{claim}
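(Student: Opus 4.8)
The plan is to reduce the statement to elementary $p$-adic bookkeeping. Since $\bb a$ is multiplicative, the sequence $\bb a^{(p)}$ depends on $n$ only through the valuation $\nu_p(n)$: writing $b_i := a_{p^i}$ (so that $b_0 = a_1 = 1$), we have $a^{(p)}_n = b_{\nu_p(n)}$ for all $n \in \NN$. Set $e := \nu_p(d)$. The first step is an easy extraction principle: if $B \subseteq \NN$ has positive upper density and both $\nu_p(n)$ and $\nu_p(n+d)$ are constant on $B$ — say equal to $j$ and $j'$ respectively — then, since $a^{(p)}_{n+d} = \lambda a^{(p)}_n$ holds for almost all $n$ and reads ``$b_{j'} = \lambda b_j$'' for every $n \in B$, we must have $b_{j'} = \lambda b_j$ (a density-zero set cannot contain $B$). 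I would apply this with $B$ ranging over the sets $B_j := \{n \in \NN : \nu_p(n) = j\}$, each of density $(1-1/p)p^{-j} > 0$, and over residue classes modulo $p$, using the standard fact that $\nu_p(n+d) = \min(\nu_p(n), e)$ whenever $\nu_p(n) \ne e$, together with the equivalence $p \mid d \iff e \ge 1$.

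Next I would split into cases. If $e \ge 1$ (equivalently $p \mid d$): applying the extraction principle to $B_0$ (on which $\nu_p(n) = \nu_p(n+d) = 0$) gives $b_0 = \lambda b_0$, hence $\lambda = 1$ since $b_0 = 1$; applying it to $B_j$ for each $j > e$ gives $b_e = \lambda b_j = b_j$, so $b_j = b_e$ for all $j \ge e$. Then $a^{(p)}_n = b_{\min(\nu_p(n),\,e)}$ depends only on $n \bmod p^e$, so $\bb a^{(p)}$ is periodic with period $p^e$; and since $p \mid d$ divides $2d$, nothing further is claimed. If $e = 0$ and $p$ is odd, one can pick a residue $r$ with $r \not\equiv 0$ and $r \not\equiv -d \pmod p$ (possible as $p \ge 3$), and the extraction principle on the class $n \equiv r \pmod p$ — where $\nu_p(n) = \nu_p(n+d) = 0$ — yields $\lambda = 1$; then applying it to each $B_j$ with $j \ge 1$ (where $\nu_p(n+d) = 0$) gives $b_j = 1$, so $\bb a^{(p)} = \mathbf 1$, which is exactly the regime $p \nmid 2d$. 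If $e = 0$ and $p = 2$, applying the extraction principle to $B_j$ for each $j \ge 1$ (again $\nu_2(n+d) = 0$) gives $b_j = \lambda^{-1}$, so $a^{(2)}_n$ equals $1$ for odd $n$ and $\lambda^{-1}$ for even $n$, hence $\bb a^{(2)}$ has period $2$; and $p = 2$ divides $2d$.

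There is no serious obstacle here — the content is purely the valuation arithmetic of $n$ and $n+d$ — but the point requiring care is the extraction principle and its interplay with ``almost all'': because $a^{(p)}_n$ and $a^{(p)}_{n+d}$ are each \emph{constant} along the progressions used (the $B_j$, or residue classes chosen so that $\nu_p(n+d)$ is also constant), the relation $a^{(p)}_{n+d} = \lambda a^{(p)}_n$ becomes a single scalar identity on each such progression and therefore cannot fail merely on a density-zero subset; it must hold identically, which is what lets us read off the constraints on the $b_i$. The only genuine bifurcation is $p = 2$ with $d$ odd, where there is no residue class on which both $n$ and $n+d$ are prime to $p$ — precisely why the conclusion there is ``period $2$'' rather than ``$\equiv \mathbf 1$'', consistent with the hypothesis $p \nmid 2d$ failing.
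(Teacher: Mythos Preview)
Your proof is correct and follows essentially the same route as the paper: both arguments observe that $a^{(p)}_n$ depends only on $\nu_p(n)$, analyse how $\nu_p(n+d)$ relates to $\nu_p(n)$ via $e=\nu_p(d)$, and use that each admissible pair of valuations occurs on a set of positive density to extract scalar identities among the $b_i$. Your case split is slightly more explicit than the paper's unified list of relations, and by deriving $\lambda=1$ first when $e\ge 1$ you obtain the marginally sharper period $p^e$ (the paper records $p^{e+1}$), but the substance is the same.
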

\begin{claimproof}	
Let $h := \nu_p(d)$ and for $i \in \NN_0$ let $z_i := a_{p^i}$. For each $n \in \NN$ with $\nu_p(n) =: i$ we have
	\begin{align}\label{eq:class:242:1}
		\nu_p(n+d) =
		\begin{cases}
			i &\text{if } i < h,\\
			h &\text{if } i > h,\\
			h + \nu_p\bra{\frac{n+d}{p^h}} & \text{if } i = h.
		\end{cases}
	\end{align}
	If $p \neq 2$ then $\nu_p( (n+d)/p^h)$ can take any value in $\NN_0$, and if $p=2$ then $\nu_p( (n+d)/p^h)$ can take any value in $\NN$. We also note that each possible pair of values $\nu_p(n), \nu_p(n+d)$ occurs for a periodic set of values of $n$, and hence with positive frequency. Thus, assuming for a moment that $p \neq 2$, it follows from \eqref{eq:class:242:1} that  
\begin{align}
\label{eq:class:242:2a}
	\lambda z_i &= z_i \text{ for all } i < h, \\
\label{eq:class:242:2b}
	\lambda z_i &= z_h \text{ for all } i > h, \\
\label{eq:class:242:2c}
	\lambda z_h &= z_i \text{ for all } i \geq h.
\end{align}
If $p = 2$ then \eqref{eq:class:242:2c} only holds for $i > h$.
	
	It follows from \eqref{eq:class:242:2b} that $(z_i)_{i=0}^\infty$ is eventually constant: $z_i = \lambda^{-1} z_{h}$ for all $i > h$. As a consequence, $\bb a^{(p)}$ is periodic with period $p^{h+1}$. This proves the first part of the claim. For the second part, taking $i = h = 0$ in \eqref{eq:class:242:2c} we conclude that $\lambda = 1$. Hence, \eqref{eq:class:242:2b} implies that $z_i = z_0 = 1$ for all $i \geq 1$ and consequently $\bb a^{(p)} = \bf 1$.
\end{claimproof}	
	
	In the case where $\D(\bb a,\bf 1) < \infty$, we are now ready to finish the argument. Indeed, in this case each of the sequences $\bb a^{(p)}$ is periodic, and each except for finitely many of them is the constant sequence $\bf 1$, and hence  $\bb a$ is periodic. 	In general, it is not necessarily the case that $\D(\bb a,\bf 1) < \infty$ but we have the following slightly weaker fact.
	 
\begin{claim}
	There exists $t \in \RR$ and a primitive Dirichelet character $\chi$ with some conductor $q$ such that for the sequence $\bb b$ given by $b_n = \chi(n)n^{it}$ we have $\D(\bb a,\bb b) < \infty$. 
\end{claim}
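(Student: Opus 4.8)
The plan is to deduce this from the classification of $1$-bounded multiplicative functions whose two-point correlations do not vanish, which is one of the central results of \cite{Klurman-2017}. The input we already have in hand is that the limit $A$ in \eqref{eq:class:718:1} exists and, as observed above, satisfies $A > 0$; here one uses that $\bb a$ is finitely-valued and not asymptotically equal to $\bf 0$, so that $\frac1N\sum_{n\le N} c_n$ is bounded away from $0$. In particular
\[
	\limsup_{N\to\infty} \absBig{ \frac{1}{N}\sum_{n=1}^{N} a_n \bar a_{n+d} } = A > 0 ,
\]
so $\bb a$ is a $1$-bounded multiplicative sequence with a non-vanishing correlation at the shift $d$, which is exactly the situation handled by the structure theorems of \cite{Klurman-2017}.

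Concretely, I would invoke the following consequence of the correlation results of \cite{Klurman-2017} (in the same circle of ideas as Theorems 1.3 and 1.8 cited above, and resting on Hal\'asz's theorem together with the work of Matom\"aki--Radziwi{\l}{\l}--Tao): if $f$ is a $1$-bounded multiplicative function and $\limsup_{N} \abs{\frac1N\sum_{n\le N} f(n)\bar f(n+d)} > 0$ for some $d \in \NN$, then $f$ is \emph{pretentious} towards a twisted character, meaning that there exist a primitive Dirichlet character $\chi$, of some conductor $q$, and a real number $t$ such that $\D\bra{f,\ \chi(\cdot)(\cdot)^{it}} < \infty$. Applying this with $f = \bb a$ and with the present value of $d$ — the shift under which $\bb a$ is asymptotically invariant — produces the sequence $\bb b$ with $b_n = \chi(n) n^{it}$ and $\D(\bb a, \bb b) < \infty$, as claimed.

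The one point that deserves attention is that the dichotomy must genuinely permit a non-trivial character $\chi$ rather than only the archimedean twist $n^{it}$: this is unavoidable here since the shift $d$ need not be $1$ (already $\bb a = \chi$ is asymptotically invariant under the shift by the conductor of $\chi$, while $\D(\chi,\bf 1) = \infty$ whenever $\chi$ is non-principal), and it is precisely in this form that the two-point results of \cite{Klurman-2017} are stated. No control on the size of $q$ or on $t$ is required at this stage — that is where the finitely-valued hypothesis re-enters later in the proof of Theorem \ref{thm:class:mult-periodic} — so the deduction itself is entirely soft. The substantive work is hidden inside the cited theorem; on the level of the present claim there is no further obstacle.
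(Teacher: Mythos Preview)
Your approach is essentially the same as the paper's: use the non-vanishing of the two-point correlation at shift $d$ together with the structure results in \cite{Klurman-2017} to conclude that $\bb a$ is pretentious to a twisted character $\chi(n)n^{it}$.

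There is one technical difference worth flagging. You work with the Ces\`aro average and invoke a statement of the form ``if $\limsup_N \absnormal{\frac1N\sum_{n\le N} f(n)\bar f(n+d)} > 0$ then $f$ is pretentious,'' attributing this to Klurman via Matom\"aki--Radziwi{\l}{\l}--Tao. The paper instead first passes to the \emph{logarithmic} average --- which is automatic here since the Ces\`aro limit $A$ exists --- obtaining
\[
\lim_{N\to\infty} \frac{1}{\log N}\sum_{n=1}^N \frac{a_n\bar a_{n+d}}{n} = A > 0,
\]
and then invokes Tao's two-point logarithmic Elliott theorem \cite{Tao-2016} (stated as \cite[Thm.~4.1]{Klurman-2017}), combined with Elliott's lemma \cite[Lem.~4.2]{Klurman-2017}, following the end of the proof of \cite[Lem.~4.3]{Klurman-2017}. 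The point is that the pretentiousness conclusion for a \emph{single} shift $d$ and Ces\`aro averaging is not what MRT gives directly (their results average over shifts or hold at almost all scales); the clean statement at a single shift is Tao's logarithmic result. Since you already have a convergent Ces\`aro limit, the passage to logarithmic averages is free and your argument is easily repaired --- but as written, the black box you cite is not quite the one available in \cite{Klurman-2017}.
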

\begin{claimproof}
	Since the limit defining $A$ converges, we also have
	\[
		\lim_{N\to \infty} \frac{1}{\log N} \sum_{n=1}^N \frac{a_n \bar a_{n+d}}{n} = A > 0.
	\]
This puts us in the same position as in the second half of the proof of Lemma 4.3 in \cite{Klurman-2017}. The claim follows by combining Tao's result on two-point correlations of multiplicative functions \cite{Tao-2016}, (\cite[Thm.\ 4.1]{Klurman-2017}) with a technical lemma due to Elliott \cite{Elliott-2010} (\cite[Lem.\ 4.2]{Klurman-2017}) and repeating the reasoning at the end of the proof of Lemma 4.3 in \cite{Klurman-2017} verbatim.
\end{claimproof}	 

\begin{claim}
	We have $t = 0$.
\end{claim}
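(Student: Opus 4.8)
The plan is to argue by contradiction: assume $t\neq 0$ and derive a contradiction from the fact that $\bb a$ is finitely-valued, using only $\D(\bb a,\bb b)<\infty$ for $b_n=\chi(n)n^{it}$ (and the standing assumption $\bb a\not\simeq\bf 0$). The underlying principle is that a finitely-valued $1$-bounded multiplicative function cannot ``pretend to be'' $\chi(n)n^{it}$ with $t\neq 0$, because $p^{it}$ ranges densely over the unit circle while $a_p$ ranges over a fixed finite set.

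First I would unpack the distance bound. Since $|a_p|\le 1$, we have $|a_p-b_p|^2=|a_p|^2-2\Re(a_p\bar b_p)+1\le 2\bigl(1-\Re(a_p\bar b_p)\bigr)$, so $\sum_p |a_p-b_p|^2/p\le 2\D(\bb a,\bb b)^2<\infty$; consequently, for each $\e>0$ the set $E_\e:=\set{p}{|a_p-b_p|^2>\e}$ has $\sum_{p\in E_\e}1/p<\infty$. For $p\notin E_\e$ with $p\nmid q$ we get $|a_p|\ge|b_p|-\sqrt\e=1-\sqrt\e$; since the moduli of the values of $\bb a$ form a fixed finite subset of $[0,1]$, if $1$ were not among them this would fail for all but a reciprocal-summable set of primes, which is impossible, so $1$ is a modulus of some value, and once $\e$ is below the gap between $1$ and the next-largest modulus we conclude $|a_p|=1$, hence $\arg a_p$ lies in a finite set $\Theta_1\subset\RR/2\pi\ZZ$. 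Likewise $\arg\chi(p)$ lies in a finite set $\Theta_2$ of rational multiples of $2\pi$. Then $|a_p-b_p|\le\sqrt\e$ forces $t\log p$ to lie, modulo $2\pi$, within $O(\sqrt\e)$ of the finite set $\Theta_1-\Theta_2$, for every prime $p$ outside a set of finite reciprocal sum.

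The decisive input is the classical equidistribution of $(t\log p)_p$ modulo $2\pi$ with respect to the logarithmic weighting: for each nonzero integer $k$ one has $\sum_{p\le x}p^{ikt}/p=O_{k,t}(1)$, a consequence of the non-vanishing of $\zeta$ on the line $\Re s=1$, whence by Weyl's criterion tested against $1/p$ (together with Mertens' theorem) $\sum_{p\le x,\ t\log p\bmod 2\pi\in I}1/p=\tfrac{|I|}{2\pi}\log\log x+O_{t,I}(1)$ for every arc $I\subset\RR/2\pi\ZZ$. Now choose $\e$ small enough that the $O(\sqrt\e)$-neighbourhood of the finite set $\Theta_1-\Theta_2$ omits some arc $I$ of positive length; then $\sum_{p:\,t\log p\bmod 2\pi\in I}1/p=\infty$, while every such prime lies in $E_\e\cup\set{p}{p\mid q}$, contradicting $\sum_{p\in E_\e}1/p<\infty$. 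Hence $t=0$.

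I expect the only genuinely non-elementary ingredient to be the equidistribution statement for $\{t\log p\}$ (standard, and available in the pretentious-multiplicative-functions literature); the remaining work is bookkeeping around the finiteness of the value sets of $\bb a$ and of $\chi$, the mildly delicate point being to pick $\e$ small enough to simultaneously force $|a_p|=1$ off a reciprocal-summable set of primes and to leave a nontrivial arc uncovered by the finitely many admissible values of $t\log p\bmod 2\pi$.
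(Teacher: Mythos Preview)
Your proposal is correct and follows essentially the same approach as the paper: both argue that since $a_p\bar\chi(p)$ ranges over a finite set while $p^{-it}$ equidistributes on the unit circle (in the $1/p$-weighted sense over primes), a positive-logarithmic-density set of primes contributes at least $\e/p$ to $\D(\bb a,\bb b)^2$, forcing divergence. The only differences are cosmetic---the paper works directly with $\Re(a_p\bar\chi(p)p^{-it})$ and counts primes in short $\log p$-intervals via the prime number theorem, whereas you first reduce to $|a_p|=1$ and then invoke Weyl's criterion together with the boundedness of $\sum_{p\le x}p^{ikt}/p$; both routes rest on the same analytic input (non-vanishing of $\zeta$ on $\Re s=1$).
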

\begin{claimproof}
For the sake of contradiction, suppose that $t \neq 0$ and let $\tau := t/\pi$. Since $\bb a$ and $\chi$ are finitely-valued, there exist $\theta \in [0,1)$ and $\e,\delta > 0$ such that for each sufficiently large prime $p$ with $\abs{ \fp{ \tau \log p } - \theta } < \delta$ we have $\Re(a_p \bar \chi(p) p^{-it}) < 1 - \e$. It follows from the prime number theorem that for each sufficiently large $n$, the number of primes $p$ with $\abs{  \tau \log p - n - \theta } < \delta$ is
\[
	\frac{\exp\bra{\frac{n+\theta+\delta}{\tau}}}{(n+\theta+\delta)/\tau} 
	- \frac{\exp\bra{\frac{n+\theta-\delta}{\tau}}}{(n+\theta-\delta)/\tau} + o\bra{\frac{\exp(\frac{n}{\tau})}{n}}
	= (c+o(1))\frac{\exp(\frac{n}{\tau})}{n},
\]
where $c > 0$ is a constant dependent on $\delta$, $\tau$ and $\theta$. It follows that
\begin{align*}
	\D(\bb a, \bb b)^2 \geq \e \sum_{n = 1}^\infty
	\sum_{\substack{ p \text{ prime} \\ \abs{  \tau \log p - \theta -n} < \delta}} \frac{1}{p} 
	 \gg  \e \sum_{n=1}^\infty \frac{c+o(1)}{n} = \infty,
\end{align*}
contrary to the defining property of $\bb b$.
\end{claimproof}	 
	 
\begin{claim}\label{claim:class:mult-per:5}
	The sequence $\bb a \bar{\bb b} = \brabig{a_n \bar \chi(n)}_{n=1}^\infty$ is periodic.
\end{claim}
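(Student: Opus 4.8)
\begin{claimproof}
	The plan is to reduce this claim to the case $\D(\cdot,\bf 1)<\infty$ already disposed of earlier in this proof, applied to the twisted sequence $\bb a' := \bb a \bar{\bb b} = \brabig{a_n\bar\chi(n)}_{n=1}^\infty$ in place of $\bb a$. Since a pointwise product of $1$-bounded, finitely-valued multiplicative sequences is again $1$-bounded, finitely-valued and multiplicative, $\bb a'$ enjoys these three properties.

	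The first step is to verify that $\bb a'$ inherits the remaining hypotheses. Since $\bar\chi$ is periodic with period $q$ and $\bb a$ is asymptotically invariant under the shift by $d$, hence by every multiple of $d$, the sequence $\bb a'$ is asymptotically invariant under the shift by $d' := \lcm(d,q)$: for almost all $n$ we have $a'_{n+d'} = a_{n+d'}\bar\chi(n+d') = a_{n+d'}\bar\chi(n) = a_n\bar\chi(n) = a'_n$. Next, having shown $t=0$ we have $b_p = \chi(p)$, so $\D(\bb a',\bf 1)^2 = \sum_p \frac{1-\Re(a_p\bar\chi(p))}{p} = \D(\bb a,\bb b)^2 < \infty$. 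Finally $\bb a' \not\simeq \bf 0$: since $\chi(n)=1$ whenever $n \equiv 1 \bmod q$, we have $a'_n = a_n$ on that progression, so $\bb a' \simeq \bf 0$ would give $a_n = 0$ for almost all $n \equiv 1 \bmod q$, whence $\bb a \simeq \bf 0$ by Lemma~\ref{lem:class:0-along-AP}, contradicting our standing assumption.

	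With these hypotheses in hand, I would run the two earlier claims of this proof --- the one producing, whenever $\D(\bb a,\bf 1)<\infty$, a root of unity $\lambda$ with $a^{(p)}_{n+d} = \lambda a^{(p)}_n$ for almost all $n$, and the one deducing from such a relation that $\bb a^{(p)}$ is periodic and equals $\bf 1$ for $p \nmid 2d$ --- verbatim for $\bb a'$ and the shift $d'$; inspection shows those arguments use $\bb a$ only through the structural properties just checked. This yields that $(\bb a')^{(p)}$ is periodic for every prime $p$ and equals $\bf 1$ for every $p \nmid 2d'$. Since $a'_n = \prod_p (a')^{(p)}_n$ with all but the finitely many factors indexed by $p \mid 2d'$ identically $1$, the sequence $\bb a'$ is a finite product of periodic sequences, hence periodic, which is the assertion.

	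The only point requiring care is the last step: checking that the two earlier claims really do go through unchanged for $\bb a'$, i.e. that they nowhere use a feature of $\bb a$ not shared by the twist, together with the non-triviality bookkeeping $\bb a' \not\simeq \bf 0$. No new analytic input --- beyond the results of Hal\'{a}sz, Tao, Elliott and Klurman already invoked --- is needed.
\end{claimproof}
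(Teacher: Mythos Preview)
Your proof is correct and follows essentially the same approach as the paper: both verify that the twisted sequence $\bb a\bar{\bb b}$ is multiplicative, asymptotically shift-invariant (by $qd$ in the paper, $\lcm(d,q)$ in your version --- an immaterial difference), and satisfies $\D(\bb a\bar{\bb b},\mathbf 1)<\infty$, then invoke the earlier two claims to conclude periodicity. You are in fact more careful than the paper in explicitly checking $\bb a'\not\simeq\mathbf 0$ via Lemma~\ref{lem:class:0-along-AP}, a hypothesis the earlier argument tacitly uses through the positivity of $A$.
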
	 
\begin{claimproof}
	The sequence $\bb a \bar{\bb b}$ is multiplicative, asymptotically invariant under the shift by $qd$, and satisfies $\D(\bb a \bar{\bb b}, \bf 1) = \D(\bb a, \bb b) < \infty$. Thus, applying the earlier reasoning with $\bb a \bar{\bb b}$ in place of $\bb a$ we conclude that $\bb a \bar{\bb b}$ is periodic.
\end{claimproof}
	 
\begin{claim}
	The sequence $\bb a$ is periodic.
\end{claim}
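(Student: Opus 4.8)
\begin{claimproof}{The plan is to prove that $\bb a$ is \emph{equal} to an explicit periodic multiplicative sequence assembled from the two periodic sequences already produced, and then to extract that equality by a contradiction argument driven by asymptotic shift-invariance.}

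First I would set up the reduction. Write $\bb c := \bb a\bar{\bb b}$, which is periodic by Claim \ref{claim:class:mult-per:5}; fix a common period $Q$ of $\bb c$ and of the character $\chi$ (so $q\mid Q$) and put $\bb h := \bb c\cdot\bb b$. Then $\bb h$ is multiplicative and $Q$-periodic, it vanishes on every $n$ with $\gcd(n,q)>1$ (because $\bb c$ and $\bb b$ do), and, since $\bar b_n b_n = \abs{\chi(n)}^2 = 1$ whenever $\gcd(n,q)=1$, we have $a_n = c_n\chi(n) = h_n$ for every $n$ coprime to $q$. As $\bb a$ and $\bb h$ are multiplicative and agree on all integers coprime to $q$, they agree everywhere --- whence $\bb a = \bb h$ is periodic --- exactly when
\begin{equation}\label{eq:appendix:vanish}
	a_{p^{j}} = 0 \qquad\text{for every prime }p\mid q\text{ and every }j\geq 1,
\end{equation}
since \eqref{eq:appendix:vanish} forces $a_{n} = 0$ whenever $\gcd(n,q)>1$. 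Thus everything reduces to establishing \eqref{eq:appendix:vanish}.

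To prove \eqref{eq:appendix:vanish} I would argue by contradiction, using asymptotic shift-invariance to propagate a forced nonvanishing. Suppose \eqref{eq:appendix:vanish} fails and pick a prime $p\mid q$ together with the least $J\geq 1$ for which $a_{p^{J}}\neq 0$. Since $\bb a\not\simeq\bf 0$, Lemma \ref{lem:class:0-along-AP} supplies a residue $r_{0}$ with $\gcd(r_{0},q)=1$ and $c_{r_{0}}\chi(r_{0})\neq 0$. Because $p\mid q\mid Q$ (hence $p\nmid r_{0}$), every element of the progression $A := \set{p^{J}m}{m\equiv r_{0}\bmod Q}$ has $p$-adic valuation exactly $J$, so $a_{n}=a_{p^{J}}a_{n/p^{J}}=a_{p^{J}}c_{r_{0}}\chi(r_{0})=:\kappa\neq 0$ for all $n\in A$; that is, $\bb a$ is identically the nonzero constant $\kappa$ on the residue class $p^{J}r_{0}\bmod p^{J}Q$. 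Asymptotic shift-invariance by $d$ then yields $a_{n+kd}=\kappa$ for almost all $n\in A$ and every $k\in\NN$. Analysing $\nu_{p}(p^{J}m+kd)$ on $A$ splits into two regimes: if $1\leq\nu_{p}(d)<J$ then $\nu_{p}(n+kd)=\nu_{p}(d)$ on $A$, forcing $a_{p^{\nu_{p}(d)}}\neq 0$ and contradicting the minimality of $J$; otherwise $\nu_{p}(n+kd)$ is essentially constant on $A$, equal to $\min(J,\nu_{p}(d))$, the translate $A+kd$ lies in a single residue class modulo $Q$, and --- after arranging, by a suitable choice of $r_{0}$ and $k$ (possible for a positive density of pairs), that this class is coprime to $q$ --- the identity $\bb a=\bb h$ on integers coprime to $q$ produces the relations
\begin{equation}\label{eq:appendix:prop}
	a_{p^{J}}\,h_{r_{0}} \;=\; h_{\,p^{J}r_{0}+kd\,}\qquad\text{for all admissible }(r_{0},k).
\end{equation}
Running \eqref{eq:appendix:prop} over all admissible $(r_{0},k)$ and combining it with the multiplicativity and periodicity of $\bb h$ should force the primitive character $\chi$ to obey a shift-relation that is incompatible with its being primitive of conductor $q>1$; this is the sought contradiction, so \eqref{eq:appendix:vanish} holds, $\bb a=\bb h$, and $\bb a$ is periodic, completing the proof of Theorem \ref{thm:class:mult-periodic}.

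The hard part will be the last step --- turning the relations \eqref{eq:appendix:prop} into a genuine symmetry of $\chi$ and ruling it out --- together with the bookkeeping in the subcase $\nu_{p}(d)\geq J$, where every translate $p^{J}m+kd$ keeps a factor of $p^{\min(J,\nu_{p}(d))}$ and one must carefully select the residues on which the translated progression is coprime to $q$. An alternative route that circumvents much of this is to re-run the pretentiousness argument behind the earlier claims on the $p$-component $\bb a^{(p)}$, which trivially satisfies $\D(\bb a^{(p)},\bf 1)<\infty$: once one checks, again from asymptotic shift-invariance of $\bb a$ together with the periodicity of $\bb c^{(p)}$, that $\bb a^{(p)}$ is itself asymptotically shift-invariant, that argument gives that $\bb a^{(p)}$ is periodic, and a short direct computation then pins it down to the principal character modulo $p$, which is precisely \eqref{eq:appendix:vanish}.
\end{claimproof}
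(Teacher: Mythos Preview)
Your reduction is incorrect: the claim that ``$\bb a$ is periodic'' is equivalent to ``$\bb a = \bb h$'', and hence to \eqref{eq:appendix:vanish}, is false. A periodic multiplicative sequence need not vanish on prime powers dividing the conductor of $\chi$. Concretely, let $\chi$ be the primitive non-principal character modulo $4$ and define the multiplicative sequence $\bb a$ by $a_{p^j}=\chi(p^j)$ for odd primes $p$, $a_2=1$, and $a_{2^j}=0$ for $j\geq 2$. Then $\bb a$ is periodic with period $8$ (one checks directly that $a_{[1,8]} = a_{[9,16]} = (1,1,-1,0,1,-1,-1,0)$), it satisfies $\D(\bb a,\chi)<\infty$ (the only contribution comes from $p=2$), and yet $a_2=1\neq 0$, so \eqref{eq:appendix:vanish} fails with $p=2\mid q=4$. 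Thus any argument aimed at establishing \eqref{eq:appendix:vanish} cannot succeed, and the contradiction you hope to extract from \eqref{eq:appendix:prop} is illusory. Your alternative route has the same problem: re-running the $\D(\cdot,\bf 1)<\infty$ argument on $\bb a^{(p)}$ would, if it worked, yield that $\bb a^{(p)}$ is periodic --- which is true in the example above --- but would not force it to be the principal character modulo $p$.

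The paper avoids this trap by not trying to pin down $\bb a$ explicitly. Instead it upgrades asymptotic shift-invariance to exact shift-invariance directly: from Claim \ref{claim:class:mult-per:5} one knows that $\bb a$ is genuinely periodic on integers coprime to $qd$. Then one partitions $\NN$ into periodic pieces $P$ indexed by the $qd$-part $m$ of $n$, the $qd$-part $m'$ of $n+d$, and a residue $r$ modulo $qd\,\lcm(m,m')$; on each such $P$ both $a_n$ and $a_{n+d}$ are \emph{constant} (by multiplicativity and the periodicity on the coprime-to-$qd$ part). Since $a_{n+d}=a_n$ for almost all $n$ and each nonempty $P$ has positive density, the two constants agree, whence $a_{n+d}=a_n$ on all of $P$. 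Taking the union over all $P$ gives $a_{n+d}=a_n$ for every $n$.
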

\begin{claimproof}
	It follows from Claim \ref{claim:class:mult-per:5} that the restriction of $\bb a$ to integers coprime to $qd$ is periodic with period $qd$. For integers $m,m',r \in \NN_0$ such that $m,m'$ divide a power of $qd$ and $0 \leq r < qd \lcm(m,m')$, consider the set $P$ of integers $n$ such that
\begin{inparaenum}
\item $n/m$ is an integer coprime to $qd$; 
\item $(n+d)/m'$ is an integer coprime to $qd$;
\item $n \equiv r \bmod qd \lcm(m,m')$.
\end{inparaenum}
Clearly the set $P$ is periodic. For $n \in P$ we have
\begin{align*}
	a_n &= a_{n/m} a_{m} = a_{r/n} a_{m},
	&
	a_{n+d} &= a_{(n+d)/m'} a_{m'} = a_{(r + d)/m'} a_{m'},
\end{align*}
and in particular $a_n$ and $a_{n+d}$ are constant on $P$. Since $a_{n+d} = a_{n}$ for almost all $n \in \NN$ and $P$ has positive density or is empty, we conclude that $a_{n+d} = a_{n}$ for all $n \in P$. Taking the union over all possible choices of $m,m',r$ we conclude that  $a_{n+d} = a_{n}$ for all $n \in \NN$.
\end{claimproof} \qedhere

\end{proof}

\bibliographystyle{alphaabbr}
\bibliography{bibliography}

\end{document}